\newcommand{\bburl}[1]{\textcolor{blue}{\url{#1}}}
\def\Z{{\mathbb{Z}}}
\newcommand{\eps}{\epsilon}
\newcommand{\fracloga}{\frac{\log |A+A|}{\log|A-A|} }
\newcommand{\tfracloga}{\tfrac{\log |A+A|}{\log|A-A|} }
\newcommand{\kbar}{\overline{k}}
\numberwithin{equation}{section}
\newcommand\be{\begin{equation}}
\newcommand\ee{\end{equation}}
\newcommand\bea{\begin{eqnarray}}
\newcommand\eea{\end{eqnarray}}
\begin{document}

\markboth{M.~Asada, S.~Manski, S.J.~Miller, H.~Suh}
{Fringe pairs in generalized MSTD sets}

%
\catchline{}{}{}{}{}
%

\title{FRINGE PAIRS IN GENERALIZED MSTD SETS}

\author{MEGUMI ASADA}

\address{{Department of Mathematics and Statistics, Williams College, Williamstown, MA 01267}\\
\email{\href{mailto:maa2@williams.edu}{maa2@williams.edu}} }

\author{SARAH MANSKI}

\address{{Department of Mathematics \& Computer Science, Kalamazoo College, Kalamazoo, MI 49006}\\
	\email{\href{mailto:Sarah.Manski12@kzoo.edu}{Sarah.Manski12@kzoo.edu}} }

\author{STEVEN J. MILLER}

\address{{Department of Mathematics and Statistics, Williams College, Williamstown, MA 01267}\\
	\email{{\href{mailto:sjm1@williams.edu}{sjm1@williams.edu}},  {\href{Steven.Miller.MC.96@aya.yale.edu}{Steven.Miller.MC.96@aya.yale.edu}} }}
	
\author{HONG SUH}

\address{{Department of Mathematics, University of California, Berkeley, CA 94720}\\
	\email{\href{mailto:hong.suh@berkeley.edu}{hong.suh@berkeley.edu}} }

\maketitle

\begin{history}
\received{???}
\accepted{20 October 2016}
\end{history}

\begin{abstract}
A \emph{More Sums Than Differences} (MSTD) set is a set $A$ for which $|A+A|>|A-A|$. Martin and O'Bryant proved that the proportion of MSTD sets in $\{0,1,\ldots,n\}$ is bounded below by a positive number as $n$ goes to infinity. Iyer, Lazarev, Miller and Zhang introduced the notion of a \emph{generalized MSTD set}, a set $A$ for which $|sA-dA|>|\sigma A-\delta A|$ for a prescribed $s+d=\sigma+\delta$. We offer efficient constructions of $k$-generational MSTD sets, sets $A$ where $A, A+A, \dots, kA$ are all MSTD. We also offer an alternative proof that the proportion of sets $A$ for which $|sA-dA|-|\sigma A-\delta A|=x$ is positive, for any $x \in \Z$. We prove that for any $\eps>0$, $\Pr(1-\eps<\log |sA-dA|/\log|\sigma A-\delta A|<1+\eps)$ goes to $1$ as the size of $A$ goes to infinity and we give a set $A$ which has the current highest value of $\log |A+A|/\log |A-A|$. We also study decompositions of intervals $\{0,1,\ldots,n\}$ into MSTD sets and prove that a positive proportion of decompositions into two sets have the property that both sets are MSTD.
\end{abstract}

\keywords{Sum-dominant sets, generalized MSTD sets, $k$-generational sets, arbitrary differences, fringe pairs.}

\ccode{Mathematics Subject Classification 2010: 11xxx, 11xxx, 11xxx}
\section{Introduction}\label{sec:intro}
A \emph{More Sums Than Differences} (MSTD) set is a set $A$ where the sumset of $A$, denoted $A+A \ = \ \lbrace a_1+a_2 : a_1, a_2 \in A \rbrace$, has a greater cardinality than the difference set, $A-A \ = \ \lbrace a_1-a_2 : a_1, a_2 \in A \rbrace$. Since addition is commutative while subtraction is not, it is reasonable to expect the difference set to be larger than the sumset in most cases; such sets are said to be difference-dominated, while sets where $|A+A| = |A-A|$ are called sum-difference balanced. Interestingly, while most sets of $\{0, 1, \dots, n\}$ are difference dominated as $n\to\infty$, Martin and O'Bryant \cite{MO} proved that a positive percentage \emph{are} sum-dominated. They predict an expected limiting density of MSTD sets of about $4.5 \times 10^{-4}$; the best lower bound at the time of writing is due to Zhao \cite{Zh1}: $4.28 \times 10^{-4}$.

Conway found the first example of an MSTD set \[\lbrace 0, 2, 3, 4, 7, 11, 12, 14 \rbrace\] in the 1960's. Hegarty later proved that there were no MSTD sets of smaller cardinality \cite{He}. While some work has been generalized to MSTD sets in abelian groups \cite{MV,Na4,Zh3}, we concern ourselves with MSTD sets $A$ with $A \subset \mathbb{Z}$. Some progress has been made in constructing infinite families of such sets \cite{Na1,Na2,Na3,Na4,Zh1,He,MOS,MS,Ru1,Ru2}, as well as MSTD sets with additional properties. One example is due to Iyer, Lazarev, Miller and Zhang \cite{ILMZ}, who developed constructions for \emph{generalized MSTD sets}, sets $A$ that satisfy $|sA-dA|>|\sigma A - \delta A|$ for a given $s+d=\sigma + \delta$. This provided them with the framework to construct $k$-generational sets, a rarer class of MSTD sets $A$ for which $A, A+A, \ldots, kA$ are all sum-dominant.

A unifying strategy used throughout this paper is the manipulation of \emph{fringe pairs}, a concept introduced by Zhao \cite{Zh2} (see Definition \ref{defn:fringepair}). Our first set of results concern fringe constructions for generalized MSTD sets that are much more efficient than the ones in \cite{ILMZ}. Before stating these we first set some notation:


\begin{itemize}
	\item $|A|$ is the cardinality of $A$,
	\item $[a,b] \ = \ \{a,a+1,\ldots,b\}$,
	\item $mA \ = \ \{\sum_{i=1}^m a_i : a_i \in A\}$ for $m \geq 1$ (note this is \emph{not} $m$ times each element of $A$),
	\item $m \cdot A \ =  \ \{ma : a \in A\}$,
	\item $-A \ = \ \{-a : a \in A\}$, $-mA=-(mA)$,
	\item $A^c \ = \ [\min A, \max A] \setminus A$.
\end{itemize}

Our first result significantly improves upon the constructions of $k$-generational sets given by Iyer et al. \cite{ILMZ}, which used base expansion and therefore yielded sets $A$ which grow astronomically with $k$.

\begin{theorem}\label{thm:efficientAOqsquared}
	For any $q>2$, there exists a set $A$ with $|A|=O(q^2)$ such that for all $s+d=\sigma +\delta $ with $s>\sigma $,  \be |sA-dA|\ >\ |\sigma A-\delta A|.\ee Further, given $k>0$ there exists a $k$-generational set $A$ with $|A|=O(k)$.
\end{theorem}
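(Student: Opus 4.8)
The plan is to reduce everything to the behavior of $A$ near its two extreme points --- that is, to its \emph{fringe pairs} in the sense of Definition \ref{defn:fringepair} --- and then to engineer those fringes by hand. I would write $A = L \cup M \cup R$, where $L$ and $R$ are the left and right fringes (of width $O(q)$) and $M$ is a full interval in the middle, and normalize $\min A = 0$, $\max A = n$. Then $sA - dA$ lives in $[-dn, sn]$, an interval of $(s+d)n + 1 = kn+1$ integers whose length depends only on $k = s+d$, not on the particular splitting. The first step is the standard ``filled middle'' observation: once $M$ is longer than the largest fringe gap can reach, every integer in the central part of $[-dn, sn]$ is attained, so that
\be
|sA - dA| \ = \ kn + 1 - H(s,d),
\ee
where $H(s,d)$ counts only the integers missing near the two ends.

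Next I would compute the two fringe contributions explicitly. Reflecting the right fringe to the origin via $R^{*} = n - R$, a short calculation shows that the integers missing just below the top $sn$ are exactly the small $m$ that cannot be written as a sum of $s$ elements of $R^{*}$ and $d$ elements of $L$, while those just above the bottom $-dn$ are the small $m$ not expressible by $s$ elements of $L$ and $d$ elements of $R^{*}$. Writing $\phi(a,b)$ for the number of such non-representable small integers of $a R^{*} + b L$, this yields $H(s,d) = \phi(s,d) + \phi(d,s)$, and the theorem collapses to the purely combinatorial inequality
\be
\phi(s,d) + \phi(d,s) \ < \ \phi(\sigma,\delta) + \phi(\delta,\sigma) \qquad (s > \sigma,\ s+d=\sigma+\delta).
\ee

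The construction itself is then the heart of the matter: I would choose the fringe pair $(L,R^{*})$, with $|L|,|R^{*}| = O(q)$ sitting inside a window of length $O(q^2)$, so that the ``pure'' fringe sums fill an initial segment completely while the ``mixed'' sums leave gaps, and moreover so that the total deficit is \emph{strictly monotone} as the mixing becomes more balanced. This is precisely the classical fringe-pair phenomenon behind ordinary MSTD sets (the case $k=2$, where one only needs $\phi(2,0)+\phi(0,2) < 2\phi(1,1)$), but now it must hold with a uniform gradient across every admissible $(s,d)$ at once. I expect this to be the main obstacle: a single pair $(L,R^{*})$ must simultaneously keep all pure sums complete and force a controlled, monotonically increasing deficit in the mixed sums, and one must check that the window in which these gaps stabilize --- a Frobenius-type quantity for the numerical semigroups generated by $L$ and $R^{*}$ --- is $O(q^2)$, which is exactly what both fixes the length of $M$ and produces the claimed cardinality $|A| = O(q^2)$.

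Finally, the $k$-generational statement is the diagonal special case: $mA$ is sum-dominant precisely when $|2mA| > |mA - mA|$, i.e. when $(s,d,\sigma,\delta) = (2m,0,m,m)$, so it suffices to arrange $\phi(2m,0) + \phi(0,2m) < 2\,\phi(m,m)$ for every $m \le k$. For this restricted family I would not reuse the quadratic-size set but instead build a leaner, carefully structured fringe of size $O(k)$, arranged so that the mixed sum $mR^{*}+mL$ retains a gap for all $m \le k$ while the pure sums never do; this gives $|A| = O(k)$. The delicate point is tuning the fringe so that the surviving mixed gap persists through all $m \le k$ at once, which I would verify by tracking that single gap as $m$ grows and confirming it has not yet been filled at $m=k$.
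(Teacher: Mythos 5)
Your reduction is exactly the paper's: split $A$ into two fringes plus a filled middle interval, observe that for such a set (the paper's \emph{rich} sets, Lemma \ref{lemma:rich}) the entire middle of $sA-dA$ is present, and thereby reduce the theorem to a counting inequality on the fringes alone (Definition \ref{defn:fringepair}). That part of your plan is sound and matches Section \ref{sec:background}. But from that point on your proposal is a statement of intent rather than a proof: you say you ``would choose'' a fringe pair whose deficit is strictly monotone across all splittings $(s,d)$ simultaneously, and you yourself flag this as ``the main obstacle.'' That obstacle \emph{is} the theorem; no candidate fringe pair is exhibited, no inequality is verified, and the $O(k)$-size construction for the $k$-generational claim is likewise deferred. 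As it stands there is a genuine gap, and it sits precisely at the heart of the result.

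The missing idea in the paper is strikingly simple, and it is worth contrasting with your heuristic. The paper takes $L=\{0\}$. This one choice collapses the pure/mixed distinction you were trying to engineer: with $L=\{0\}$ we have $sL+dR=dR$ and $sR+dL=sR$, so the fringe inequality becomes a statement about dilates of a \emph{single} set,
\begin{equation*}
|dR\cap[0,k]|+|sR\cap[0,k]| \ > \ |\delta R\cap[0,k]|+|\sigma R\cap[0,k]|.
\end{equation*}
Taking $R=\{0,1,q\}$ with window $k=q^2$, Lemma \ref{0mq} gives $|cR\cap[0,k]|=(c+1)(c+2)/2$ for $c<q$ (one less at $c=q$), so after bookkeeping the constants the inequality reduces to $s^2+d^2>\sigma^2+\delta^2$, which holds for \emph{every} admissible splitting at once by convexity of $c\mapsto c^2$, since $s+d=\sigma+\delta$ and $0\le d<\delta\le\sigma<s$. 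Filling the middle with an interval of length about $q^2$ (needed for richness; this, not the fringe, is where the $q^2$ elements live) gives $|A|=q^2+6$. For the $k$-generational claim the same trick with $R=\{0,1,3\}$ and window $6k$ gives $(2j)R=[0,6j]\setminus\{6j-1\}$ and $jL+jR=[0,3j]\setminus\{3j-1\}$, hence counts $6j+1>6j$ for all $j\le k$, and the resulting set has $6k+5=O(k)$ elements. Note that in this working construction \emph{both} the pure and the mixed sums have gaps; your requirement that the pure sums $sR^{*}$ completely fill an initial segment for all $s\le q$ is a strong Frobenius-type condition that is both harder to arrange and unnecessary. The theorem is won by counting, not by gap-tracking.
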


We prove Theorem \ref{thm:efficientAOqsquared} in \S\ref{pf:efficientkgen}. Our constructions generate explicit $k$-generational sets that are much smaller than those previously constructed, making them far easier to manipulate and verify. We then apply the fringe pairs framework to give a new proof of the following result in \S\ref{pf:proportionsdsigmadelta}.

\begin{theorem}\label{thm:proportionsdsigmadelta}
	For any $x \in \Z$ and $0\leq d <\delta \leq \sigma <s$ with $s+d=\sigma+\delta$, the proportion of $A\subseteq [0,n]$ such that $|sA-dA|-|\sigma A- \delta A|=x$ is bounded below by a positive number as $n$ goes to infinity.
\end{theorem}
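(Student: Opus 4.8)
The plan is to reduce the statement to a finite combinatorial problem about the \emph{fringes} of $A$, using the hypothesis $s+d=\sigma+\delta$ to cancel the bulk of both generalized difference sets. I model a uniformly random $A\subseteq[0,n]$ by including each element independently with probability $1/2$, fix a fringe length $k$ (a constant, chosen large in terms of $x$), and write $L=A\cap[0,k-1]$ and $R=A\cap[n-k+1,n]$ for the left and right fringes. I restrict throughout to the positive-probability event $0\in L$, $n\in R$, so that $sA-dA$ spans exactly $[-dn,sn]$ and $\sigma A-\delta A$ spans exactly $[-\delta n,\sigma n]$, two intervals holding the same number $(s+d)n+1=(\sigma+\delta)n+1$ of integers.

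First I would establish the fringe reduction. A value $sn-j$ with $0\le j<k$ satisfies $\sum_{i=1}^s(n-a_i)+\sum_{l=1}^d b_l=j$ with all $s+d$ summands in $[0,j]$, which forces each $a_i$ into the right fringe and each $b_l$ into the left fringe; the symmetric statement holds near the bottom $-dn$. Hence, in the formalism of Zhao \cite{Zh2} (Definition \ref{defn:fringepair}), the integers of $[-dn,sn]$ missing from $sA-dA$ all lie in the two length-$k$ fringe zones, and their number $f_{s,d}(L,R)$ is a function of $(L,R)$ alone, provided the central range is \emph{full} (every integer of $[-dn+k,\,sn-k]$ is attained). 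The same holds for $\sigma A-\delta A$, with a count $f_{\sigma,\delta}(L,R)$. Since the two ambient intervals have equal length, on the event that both central ranges are full the bulk cancels and
\[
|sA-dA|-|\sigma A-\delta A|\;=\;f_{\sigma,\delta}(L,R)-f_{s,d}(L,R)\;=:\;g(L,R),
\]
an integer depending only on the fringe data.

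Next I would bound the proportion from below, uniformly in $n$. A Martin--O'Bryant-type estimate \cite{MO}, applied to the generalized difference sets, shows that conditioned on any fixed fringe the probability that either central range fails to be full is at most some $\eps(k)$ with $\eps(k)\to0$ as $k\to\infty$, uniformly in $n$; this is because the chance that a central integer is not representable decays exponentially in its distance to the nearest end. A prescribed fringe pair $(L_0,R_0)$ occurs with probability $2^{-2k}$, so for $k$ large the event $\{(L,R)=(L_0,R_0)\}\cap\{\text{both central ranges full}\}$ has probability at least $2^{-2k}(1-\eps(k))>0$, a constant independent of $n$, and on it the difference equals $g(L_0,R_0)$. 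It therefore suffices to exhibit, for each target $x$, one admissible fringe pair with $g(L_0,R_0)=x$.

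This realizability step is the main obstacle. Writing the fringe gaps explicitly, $f_{s,d}(L,R)$ is the number of integers of $[0,k-1]$ missing from the sumsets $sR^\ast+dL$ (top) and $sL+dR^\ast$ (bottom), where $R^\ast=\{n-a:a\in R\}$ and $mX$ denotes the $m$-fold sumset; thus $g(L,R)$ is a $\pm1$ combination of such gap counts for the four parameters $s,d,\sigma,\delta$. I would first record that these gap counts are additive under a base-$B$ separation of the fringe into well-spaced blocks, exactly as in the base-expansion constructions of Iyer et al.~\cite{ILMZ}: for $B$ large the gaps contributed by different blocks to each of the four sumsets occupy disjoint ranges, so $g$ of a superposition is the sum of the $g$'s of the blocks. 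It then suffices to supply a block with $g=-1$ (a short explicit fringe already does this) and a block with $g=c\ge1$ (furnished by a localized form of the sum-dominant construction behind Theorem \ref{thm:efficientAOqsquared}); superposing $i$ copies of the latter and $j$ of the former gives $g=ic-j$, which ranges over all of $\Z$. The delicate point is verifying the non-interference of the blocks simultaneously for all four sumsets $sR^\ast+dL$, $sL+dR^\ast$, $\sigma R^\ast+\delta L$, and $\sigma L+\delta R^\ast$; granting it, the uniform lower bound of the previous paragraph yields the claimed positive proportion.
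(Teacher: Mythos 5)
Your reduction to fringe data and the uniform-in-$n$ probability bound are sound, and they are in fact the same skeleton the paper uses (it works with Zhao-style $k$-affluent sets and Lemma \ref{lemma:missing_diff}, splitting the statement into Theorems \ref{thm:proportionsdsigmadeltapartI} and \ref{thm:proportionsdsigmadeltapartII}). The genuine gap is your realizability step, and it is not a deferrable technicality: the additivity of the windowed discrepancy $g$ under superposition of well-separated blocks is false, no matter how much separation you impose. The reason is structural. For a union $R = R_1 \cup (B + R_2)$, the $c$-fold sumset splits into coefficient levels $cR = \bigcup_{\ell=0}^{c} \left( (c-\ell)R_1 + \ell R_2 + \ell B \right)$, so the window sees mixed cross-terms and, crucially, the empty stretches between consecutive levels; how much of each stretch gets filled depends on the coefficient $c$, and this is exactly the part that survives the cancellation coming from $s+d=\sigma+\delta$. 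Concretely, take $L=\{0\}$ and the blocks $R_1=[0,m]$, $R_2=\{0,q\}$. Each block alone has discrepancy zero in any sufficiently large window: for $R=[0,m]$ the four sets $sL+dR$, $sR+dL$, $\sigma L+\delta R$, $\sigma R+\delta L$ are intervals with $dm+1$, $sm+1$, $\delta m+1$, $\sigma m+1$ elements, which cancel since $s+d=\sigma+\delta$; for $R=\{0,q\}$ they are arithmetic progressions with $d+1$, $s+1$, $\delta+1$, $\sigma+1$ terms, which likewise cancel. Yet the superposition $R=[0,m]\cup\{q\}$ has discrepancy $(\sigma\delta-sd)m \neq 0$ over the full window --- this is precisely the computation of $f(sq)$ in the paper's proof. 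So the discrepancy of a superposition is not the sum of the blocks' discrepancies; the interference you propose to rule out is the sole source of the effect, and your formula $g=ic-j$ has no justification. (Note also that the base-expansion lemma of Iyer et al.\ \cite{ILMZ} gives multiplicativity of cardinalities under Minkowski sums, not additivity of windowed gap counts under unions, so that analogy does not supply the missing lemma.)

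What the paper does instead, and what you could adopt, is to keep a single two-block family and slide the window rather than add blocks. With $L=\{0\}$, $R=[0,m]\cup\{q\}$, $q>(s+d)m$, one has the chain $dR \subseteq \delta R \subseteq \sigma R \subseteq sR$, which forces the windowed discrepancy $f(k)$ to change by at most $1$ when $k$ increases by $1$; since $f(q-1)=0$ and $f(sq)=(\sigma\delta-sd)m \ge m$, a discrete intermediate value argument produces, for every $0 \le x \le m$, a cutoff $k$ with $f(k)=x$, and then any $k$-affluent set with that fringe pair realizes the difference $x$. Negative $x$ is handled by the mirrored family $L=[0,m]$, $R=[0,m]\cup\{q\}$, for which the chain of inclusions reverses. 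This sidesteps any non-interference lemma, which, as the example above shows, cannot hold in the form your argument needs.
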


We also study the values of $\log |sA-dA| / \log |\sigma A-\delta A|$. These are natural objects to study as they normalize the excess of one generalized sum or difference set relative to the number of elements of that set. In other words, if we want to construct a set with many more sums than differences we don't want to do so through base expansion and taking exponentially large sets; we want to construct small sets with the desired excess. The quantity $\log |A+A| / \log |A-A|$ was studied by Ruzsa in \cite{Ru2}, where he proved that there are a ``multitude'' of sets with $\log |A+A|/\log |A-A|>1+c$ with $c>0$. Precisely, he proved that there exists $c>0$ such that for every sufficiently large $n$ there exists a set such that \[|A|\ = \ n,\ \ \ |A-A|\ \leq\ n^{2-c},\ \ \ |A+A| \ \geq\ n^2/2-n^{2-c},\] or in other words, $\log|A+A|/\log|A-A| > 1+c_1$ where $c_1>0$. We show that for any $\eps>0$ there are not enough sets $A\subseteq [0,n]$ with $\log|sA-dA|/\log|\sigma A-\delta A|>1+\eps$ to constitute a positive proportion. In fact, we prove a stronger statement about $|sA-dA|/|\sigma A-\delta A|$ in \S\ref{pf:strongerratio}.

\begin{theorem}\label{thm:strongerratio} Choosing subsets $A \subset [0,n]$ uniformly, for every $\epsilon>0$ we  have
	\begin{equation} \label{eq:log1}
	\lim_{n\to\infty} \Pr \left(1-\eps< \frac{\log|sA-dA|}{\log|\sigma A-\delta A|}<1+\eps \right)=1.
	\end{equation}
	In fact,
	\begin{equation} \label{eq:log2}
	\lim_{n\to\infty} \Pr \left( \frac{s+d-1}{s+d}< \frac{|sA-dA|}{|\sigma A-\delta A|}< \frac{s+d}{s+d-1} \right) \ = \ 1.\end{equation}
\end{theorem}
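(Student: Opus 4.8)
The plan is to show that, with probability tending to $1$, each of the generalized sumsets $sA-dA$ and $\sigma A-\delta A$ is almost as large as it could possibly be, which forces their ratio close to $1$. Write $m=s+d=\sigma+\delta$. Since $A\subseteq[0,n]$ we have $sA-dA\subseteq[-dn,sn]$ and $\sigma A-\delta A\subseteq[-\delta n,\sigma n]$, and each of these intervals contains exactly $mn+1$ integers; hence $|sA-dA|,|\sigma A-\delta A|\le mn+1$ deterministically. The whole content is therefore a matching lower bound of the form $mn+1-o(n)$ holding with high probability.

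The heart of the argument is a sub-lemma about pure iterated sumsets: for fixed $r\ge 1$ and $A\subseteq[0,n]$ uniform, with probability $\to 1$ one has $rA\supseteq[L,rn-L]$ with $L=o(n)$ (in fact $L=O(\log n)$). I would prove this by a first-moment bound over the ``missing'' values. For a target $K\in[0,rn]$ I would exhibit $N(K)=\Omega\bigl(\min(K,\,rn-K,\,n)\bigr)$ representations $K=a_1+\cdots+a_r$ that use \emph{pairwise disjoint} sets of elements of $[0,n]$; this is done explicitly by spreading $K$ across the $r$ coordinates and letting each coordinate range over its own disjoint window, of length $\Theta(n)$ for $K$ in the bulk and $\Theta(K)$ near the endpoint. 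Because the element sets are disjoint, the events ``the $j$-th representation lies in $A$'' are independent and each has probability $\ge 2^{-r}$, so $\Pr(K\notin rA)\le(1-2^{-r})^{N(K)}$. Choosing the implied constant so that this is below $n^{-2}$ whenever $\min(K,rn-K)\ge C\log n$, a union bound over those $K$ gives $rA\supseteq[C\log n,\,rn-C\log n]$ with probability $\to1$.

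I would then assemble the four sets. Applying the sub-lemma to $r=s$ and $r=d$ (the case $d=0$ being trivial since then $sA-dA=sA$) and intersecting the two good events, with high probability
\[
sA-dA\ \supseteq\ \bigl[\mbox{bulk of }sA\bigr]-\bigl[\mbox{bulk of }dA\bigr]\ =\ \bigl[-dn+O(\log n),\ sn-O(\log n)\bigr],
\]
so $|sA-dA|\ge mn+1-O(\log n)$; the identical argument for $\sigma,\delta$ gives $|\sigma A-\delta A|\ge mn+1-O(\log n)$. Combining with the trivial upper bounds, with probability $\to1$ both quantities lie in $[\,mn+1-O(\log n),\,mn+1\,]$, whence
\[
1-o(1)\ \le\ \frac{|sA-dA|}{|\sigma A-\delta A|}\ \le\ 1+o(1).
\]
In particular, for large $n$ the ratio lies in the fixed open interval $\bigl(\tfrac{m-1}{m},\tfrac{m}{m-1}\bigr)$, which contains $1$; this is \eqref{eq:log2}. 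Then \eqref{eq:log1} follows, since on that event $\log|sA-dA|=\log|\sigma A-\delta A|+\log(\mathrm{ratio})$ with $|\log(\mathrm{ratio})|\le\log\tfrac{m}{m-1}=O(1)$, while $\log|\sigma A-\delta A|\ge\log\bigl(mn+1-O(\log n)\bigr)\to\infty$, so the quotient of logarithms is $1+O(1/\log n)$.

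The main obstacle is the disjoint-representation construction inside the sub-lemma. For $r=2$ it is the classical fact that $A+A$ fills its range up to an $O(\log n)$ fringe, obtained from the $\Theta(\min(K,2n-K))$ pairwise disjoint pairs $\{b,\,K-b\}$; but for general $r$ one must partition $[0,n]$ into coordinate-windows so that the $r$ entries of each representation are automatically distinct and no element is reused between representations, while still realizing $\Omega\bigl(\min(K,rn-K,n)\bigr)$ of them for every $K$. Everything after that—independence, the union bound, and the deduction of both displayed limits—is routine.
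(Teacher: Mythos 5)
Your high-level strategy --- show that with probability tending to one both $sA-dA$ and $\sigma A-\delta A$ fill all but a vanishing fraction of their ambient intervals, which forces the ratio to $1$ --- is sound, and if the details worked it would even give something stronger than \eqref{eq:log2} (the ratio would tend to $1$, not merely lie in the stated interval). But there is a genuine gap in your key sub-lemma and in the assembly step. The sub-lemma is false for $r=1$: a uniform random subset $A\subseteq[0,n]$ contains no long interval at all, since $\Pr(A\supseteq[L,n-L])=2^{-(n-2L+1)}$, and with high probability $A$ misses about half of every window of length $C\log n$; so ``$1A\supseteq[L,n-L]$ with $L=O(\log n)$'' holds with probability tending to $0$, not $1$. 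Consequently the assembly $sA-dA\supseteq[\text{bulk of }sA]-[\text{bulk of }dA]$ breaks whenever one of the coefficients you feed into the sub-lemma equals $1$, i.e.\ whenever $d=1$ or $\delta=1$ (note $\sigma=1$ forces $\delta=1$). This is not an edge case: the classical comparison $s=2$, $d=0$, $\sigma=\delta=1$, comparing $|A+A|$ with $|A-A|$, has $\sigma A-\delta A=A-A$, and your argument would need the nonexistent ``bulk of $A$'' on both sides; likewise $(s,d)=(3,1)$, $(\sigma,\delta)=(2,2)$ needs the bulk of $dA=A$. You flag $d=0$ as trivial but never address coefficient $1$.

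The repair is to treat the mixed set directly rather than as a difference of two bulks. Either (i) run your disjoint-representation argument on $sA-dA$ itself, exhibiting for each target $K$ many representations $K=a_1+\cdots+a_s-b_1-\cdots-b_d$ with pairwise disjoint supports (for $A-A$ these are the classical disjoint pairs $\{b,b+K\}$; this is exactly what the paper's Lemma~\ref{lemma:missing_diff}, quoted from Zhao, provides for $A+A$ and $A-A$); or (ii) argue as in the paper's Lemma~\ref{lemma:A+A}: establish the bulk only for $2A$, then write $sA-dA\supseteq \text{bulk}(2A)+(s-2)A-dA$ and translate the bulk by the $O(1)$ sums built from the extreme elements of $A$ (which with high probability are within $O(\log n)$ of $0$ and $n$); since the bulk interval is longer than $\max A-\min A$, these translates glue into one interval $[-dn+O(\log n),\,sn-O(\log n)]$. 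With either fix your proof goes through and is genuinely different in style from the paper's, which instead runs through Zhao's affluent-set and fringe-pair machinery (the quantities $\mu_n,\lambda_n,\Lambda_n$) and only needs the weaker deterministic bounds $|sA-dA|\le(s+d)n+1$ and $|\sigma A-\delta A|\ge(s+d-1)n+O(1)$ valid for all affluent sets; your route, once repaired, avoids the fringe-pair bookkeeping entirely and quantifies how close the ratio is to $1$.
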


We also construct a set $A$ for which \be \fracloga \  = \ \frac{\log 892}{\log 765}\ = \ 1.02313,\ee which is larger than the previous largest value of \be \fracloga\ = \ \frac{\log 91}{\log 83} \ = \ 1.0208\ee found in \cite{He, MO}.

Finally, we investigate decompositions of the interval $[0,n]$ into two disjoint MSTD sets in \S\ref{sec:bimstd}. It turns out that a positive proportion of decompositions of $[0,n]$ into two disjoint sets have the property that both components are MSTD. If both $A \subseteq [0,n]$ and $[0,n] \setminus A$ are MSTD, we say that $A$ is \emph{bi-MSTD}.

\begin{theorem}\label{thm:bimstd}
	Let $A$ be a uniform random subset of $[0,n]$ with $n \geq 19$. Then the probability that $A$ is bi-MSTD is bounded below by a positive number (and there are no bi-MSTD sets for $n < 19$).
\end{theorem}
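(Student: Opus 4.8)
The plan is to establish Theorem~\ref{thm:bimstd} via the fringe-pairs method that underlies the rest of the paper, exploiting the fact that whether a set is MSTD is determined entirely by its behavior near the two endpoints of the interval $[0,n]$. The key structural observation is that if $A \subseteq [0,n]$, then its complement $[0,n]\setminus A$ is also a subset of the same interval, and both $A+A$ and $([0,n]\setminus A)+([0,n]\setminus A)$ live in $[0,2n]$ while the corresponding difference sets live in $[-n,n]$. The interior portions of all four of these sumsets and difference sets will be completely filled once $A$ (and hence $A^c$) is sufficiently dense away from the fringes, so the question of whether each of $A$ and $[0,n]\setminus A$ is MSTD reduces to a finite condition on the \emph{left fringe} and \emph{right fringe} of $A$ --- the intersection of $A$ with $[0,\ell]$ and with $[n-\ell,n]$ for a fixed window size $\ell$.

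\medskip

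First I would fix a small constant $\ell$ (a window width) and decompose a random subset $A\subseteq[0,n]$ into its left fringe $L = A\cap[0,\ell]$, its right fringe $R = A\cap[n-\ell,n]$, and the middle $M = A\cap(\ell,n-\ell)$. With high probability the middle $M$ is dense enough that $A+A$ covers the entire central block $[2\ell, 2n-2\ell]$ and $A-A$ covers $[-(n-2\ell), n-2\ell]$; the same holds simultaneously for $A^c$ since the complement of a dense set is also dense in the required sense. Conditioned on this high-probability "full middle" event, the cardinalities $|A+A|$, $|A-A|$, $|A^c+A^c|$, $|A^c-A^c|$ --- and therefore the signs of $|A+A|-|A-A|$ and $|A^c+A^c|-|A^c-A^c|$ --- depend only on the fringe data $L$ and $R$ (together with the fixed geometry of the window), not on the bulk $M$. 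This is precisely the fringe-pairs mechanism: the excess is a local quantity.

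\medskip

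Next I would exhibit a single explicit choice of fringe pair $(L,R)$ for which both $A$ and $A^c$ come out MSTD. Concretely, one designs $L$ and $R$ so that the four fringe contributions make both $|A+A|>|A-A|$ and $|A^c+A^c|>|A^c-A^c|$ hold; the complement within each fringe window automatically supplies the fringes of $A^c$, so a single well-chosen configuration controls both sets at once. Because the fringe windows have fixed size $\ell$ independent of $n$, the event $\{A\cap[0,\ell]=L,\ A\cap[n-\ell,n]=R\}$ has probability exactly $2^{-2(\ell+1)}$ (a positive constant), and the middle $M$ ranges freely. Combining this with the high-probability full-middle event, the probability that $A$ is bi-MSTD is at least $2^{-2(\ell+1)}$ minus a term that vanishes as $n\to\infty$, which is bounded below by a positive constant. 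The sharp threshold $n\geq 19$ and the nonexistence for $n<19$ would then follow from a direct finite search, since for small $n$ the fringe windows overlap and the interval is simply too short to accommodate two disjoint MSTD sets --- one verifies by exhaustive check that $19$ is the minimal length admitting a bi-MSTD decomposition.

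\medskip

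The main obstacle I anticipate is the simultaneous control of \emph{both} $A$ and its complement: the fringe configuration that makes $A$ sum-dominated determines, by complementation, the fringe configuration of $A^c$, and one must verify that these are compatible so that $A^c$ is sum-dominated as well rather than difference-dominated. Unlike the single-set constructions earlier in the paper, here we cannot choose the two fringe patterns independently, so finding an explicit fringe pair that simultaneously works for a set and its complement is the crux; the rest is the now-standard density argument showing the middle is almost surely full.
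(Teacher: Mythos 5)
Your overall strategy---reduce bi-MSTD-ness to a fringe condition plus a full-middle event, exhibit one good fringe configuration, and settle small $n$ by exhaustive search---is exactly the strategy of the paper (Zhao's rich-set framework, the notion of bi-rich sets, and an explicit bi-MSTD fringe pair). But there is a genuine gap, and you name it yourself: you never produce, nor prove the existence of, a fringe pair $(L,R;k)$ that is simultaneously MSTD for the set and for its complement. This is not a detail the ``standard machinery'' supplies; it is the entire content of the theorem. Complementation couples the two fringe conditions, so none of the single-set fringes used elsewhere in the paper (e.g.\ $L=\{0\}$, $R=[0,m]\cup\{q\}$) can be reused, and the authors report that they found no deterministic construction at all---only a random search produced the pair they use, namely $k=30$ with the sets in \eqref{k=30}, together with the explicit bi-MSTD set in $[0,19]$ of \eqref{smallest_bi}, which settles $n=19$ and the nonexistence claim for $n<19$. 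Note also that because the paper defines $A^c=[\min A,\max A]\setminus A$, the set in \eqref{smallest_bi} remains bi-MSTD as a subset of $[0,n]$ for every $n\geq 19$, which is how the paper covers the finitely many $n$ between $19$ and the threshold $N>60$ where the asymptotic argument kicks in; under your ``decomposition of $[0,n]$'' framing this portability is lost and those intermediate $n$ would need separate treatment.

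There is also a quantitative error in your combination step. You bound $\Pr(\text{bi-MSTD})\geq 2^{-2(\ell+1)}-(\text{full-middle failure probability})$ and assert that the failure term vanishes as $n\to\infty$ with $\ell$ fixed. It does not: for instance $\Pr(\ell+1\notin A+A)\geq (3/4)^{(\ell+3)/2}$ uniformly in $n$, since $\ell+1$ has only about $\ell/2$ representations as a sum and all of them involve elements of $[0,\ell+1]$. The failure probability is small only when $\ell$ is large, and it is then still vastly larger than the fringe probability: the paper's union bound gives roughly $22.4\cdot(3/4)^{\ell/2}$, which exceeds $2^{-2(\ell+1)}$ for every $\ell$, so the subtraction yields a negative lower bound no matter how $\ell$ is chosen. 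The correct combination---which the paper inherits by working inside Zhao's framework \cite{Zh2} rather than by naive subtraction---is multiplicative: one conditions on the fringe event (whose probability is the constant $2^{-2(\ell+1)}$) and shows that the \emph{conditional} probability of (bi-)richness is bounded below by a constant depending only on $\ell$, uniformly in $n$. Your requirement that the difference-set middles also be full (affluence) rather than just the sumset middles (richness, which Lemma \ref{lemma:rich} shows suffices) is harmless, but the two issues above mean the proposal as written does not prove the theorem.
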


The proof of Theorem \ref{thm:bimstd} is given in \S\ref{subsec:bimstd}.

\ \\

The paper is organized as follows. We define fringe pairs and prove some needed properties in \S\ref{sec:background}. We then prove our results on efficient $k$-generational constructions in \S\ref{sec:kgen}, on arbitrary differences in \S\ref{sec:arbitrarydiff}, on the ratio of the logarithms in \S\ref{sec:ratio} and bi-MSTD sets in \S\ref{sec:bimstd}. We conclude in \S\ref{sec:future} with topics for future investigations.

\section{Background}\label{sec:background}

\subsection{Definitions}

Martin and O'Bryant used the idea that the fringes of $A$ essentially determine whether or not $A$ is MSTD, as the middle does not contribute as much to $|A+A|-|A-A|$ since almost all middle elements of the sum and difference set are attained \cite{MO}. This is because the number of elements of a set $A$ chosen uniformly among subsets of $[0, n]$ is tightly concentrated around $(n+1)/2$, and there are so many ways of writing middle numbers as a sum or difference that with high probability all are realized. Zhao built on these ideas to prove that the proportion of MSTD sets in $[0,n]$ converges as $n$ goes to infinity. We extend many of Zhao's definitions and results in \cite{Zh2} to the setting of generalized MSTD sets. Of particular interest are Definition \ref{defn:fringepair} and Definition \ref{defn:richset}, which define \emph{fringe pairs} and \emph{rich sets} for generalized MSTD sets. The results in this section will prove valuable in the proofs of our main results in Sections \ref{pf:efficientkgen}, \ref{pf:proportionsdsigmadelta}, and \ref{subsec:bimstd}. In the following, we fix $s+d=\sigma +\delta $.

\begin{definition}\label{defn:fringepair} Let $s, d, \sigma, \delta$ be non-negative integers such that $0\leq d <\delta \leq \sigma <s$ and $s+d=\sigma+\delta$.  A generalized MSTD fringe pair of order $k$ for $(s,d), (\sigma ,\delta)$ is a pair $(L,R;k)$ such that $L,R \subseteq [0,k]$ with $0 \in L,R$ and \begin{eqnarray} & & |(sL+dR) \cap [0,k]| + |(sR+dL) \cap [0,k]| \nonumber\\ & & \ \ \ \ \ \ \ \ \ >\ |(\sigma L+\delta R) \cap [0,k]| + |(\sigma R+\delta L) \cap [0,k]|.\end{eqnarray}
\end{definition}

We impose an order on the set of generalized MSTD fringe pairs by having $(L,R;k)\leq (L',R';k')$ if $k\leq k'$ and \be L \ = \ L' \cap [0,k], \hspace{5mm} R=R' \cap [0,k], \hspace{5mm} [k+1,k'] \subseteq sL'+dR', sR'+dL'.\ee \label{def:minimalfpreq}

$(L,R;k) < (L',R';k')$ is defined as expected with $k < k'$ as well as the stipulations listed in \eqref{def:minimalfpreq}.

\begin{definition}\label{defn:minimalfringepair}
	A minimal generalized MSTD fringe pair is a pair $(L,R;k)$ such that $(L,R;k)\leq (L',R';k')$ for all generalized MSTD fringe pairs $(L',R';k')$ with $k' \ge k$.
\end{definition}

We later use the partial order of generalized MSTD fringe pairs to reduce the study of all generalized MSTD fringe pairs to simply the minimal ones.

\begin{definition}\label{defn:richset}
	A set $A \subseteq [0,n]$ is a $k$-\emph{rich set} with MSTD fringe pair $(L,R;k)$ if \begin{eqnarray} & & 2k \ < \ n, \ \ \ \ \ A \cap [0,k]\ = \ L, \ \ \  \ \ \ A \cap [n-k,n]\ = \ n-R, \nonumber\\ & & \ \ \ \ \ \ \ \ \ \ \ \ \ \ \ \ \  [k+1, 2n-k-1]  \ \subseteq\ A+A.\end{eqnarray} When we do not specify a fringe pair and say $A$ is $k$-rich, we often simply mean $[k+1,2n-k-1] \subseteq A+A$. Sometimes we simply say that $A$ is rich if $k$ is clear from the context.
\end{definition}

\subsection{Important Characteristics of Rich Sets}

It turns out that a rich set with generalized MSTD fringe pair $(L,R;k)$ is a generalized MSTD with respect to $(s,d)$, $(\sigma,\delta)$ (Lemma \ref{lemma:rich}). We first prove a simple lemma.

\begin{lemma}\label{lemma:A+A}
	A $k$-rich set $A\subseteq [0,n]$ has the property \be [-dn+k+1,sn-k-1]\ \subseteq\ sA-dA \ee for any $s+d>2$.
\end{lemma}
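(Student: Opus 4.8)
The plan is to bootstrap the single richness hypothesis $A+A \supseteq [k+1,2n-k-1]$ into control over every iterated sumset, and then realize $sA-dA$ as a Minkowski sum of one long interval with a short arithmetic progression of shifts $\{0,-n,-2n,\dots\}$. Recall that the rich-set conditions force $0\in A$ (from $0\in L$) and $n\in A$ (from $0\in R$, since $A\cap[n-k,n]=n-R$), and also that $2k<n$; these are the only inputs I will use.

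First I would show by induction on $m$ that $mA \supseteq [k+1,\,mn-k-1]$ for every $m\ge 2$. The base case $m=2$ is exactly the richness hypothesis. For the inductive step I write $(m+1)A = mA+A \supseteq (mA+0)\cup(mA+n)$ and use the inductive hypothesis to obtain $[k+1,mn-k-1]\cup[n+k+1,(m+1)n-k-1]$; these two intervals abut or overlap precisely when $(m-1)n\ge 2k+1$, which holds for all $m\ge 2$ because $2k<n$. This yields $(m+1)A\supseteq[k+1,(m+1)n-k-1]$.

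Next I would treat the case $s\ge 2$ (the one forced by the standing hypotheses $0\le d<\delta\le\sigma<s$, which give $s>\sigma\ge\delta\ge 1$). Since $0,n\in A$, taking $j$ copies of $n$ and $d-j$ copies of $0$ shows $jn\in dA$, hence $-jn\in -dA$ for each $0\le j\le d$. Combining this with $sA\supseteq[k+1,sn-k-1]$ from the induction gives
\be sA-dA \ \supseteq\ \bigcup_{j=0}^{d}\big([k+1,\,sn-k-1]-jn\big)\ =\ \bigcup_{j=0}^{d}[\,k+1-jn,\ sn-k-1-jn\,]. \ee
The intervals indexed by $j$ and $j+1$ overlap iff $(s-1)n\ge 2k+1$, which again follows from $s\ge 2$ and $2k<n$. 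The union therefore collapses to the single interval $[-dn+k+1,\,sn-k-1]$, as claimed.

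Finally I would dispose of the remaining possibility. Since $s+d>2$, at least one of $s,d$ is $\ge 2$; if $s\le 1$ then $d\ge 2$, so I apply the argument above with the roles of $s$ and $d$ exchanged to get $dA-sA\supseteq[-sn+k+1,\,dn-k-1]$, and then negate using $sA-dA=-(dA-sA)$. The only point requiring care is the overlap bookkeeping: every gap between consecutive shifted intervals is closed exactly by the inequality $2k<n$, so the main obstacle is simply verifying that the endpoints line up and that the chain of shifted copies tiles $[-dn+k+1,sn-k-1]$ with no holes; there is no deeper difficulty.
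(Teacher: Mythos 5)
Your proof is correct and takes essentially the same approach as the paper: both use $0,n\in A$ to translate the rich interval $[k+1,2n-k-1]$ by multiples of $n$ and glue the translates via $2k<n$, the paper doing it in one step through $sA-dA=2A+(s-2)A-dA\supseteq[k+1,2n-k-1]+(s-2)\{0,n\}-d\{0,n\}$, while you package the identical family of shifted intervals as an induction on $mA$ followed by subtracting the shifts $\{0,n,\ldots,dn\}$. Your handling of the case $s\le 1$ (swap $s$ and $d$, then negate) matches the paper's without-loss-of-generality step as well.
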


\begin{proof}
	Suppose without loss of generality that $s \geq d$. For $d \geq s$, simply switch the roles of $s$ and $d$. Since $s+d>2$ and $s \geq d$, we have $s \geq 2$. Thus $sA-dA=2A+(s-2)A-dA$. Then
	\begin{align}
	2A+(s-2)A-dA &\ \supseteq\ [k+1,2n-k-1]+(s-2)A-dA \nonumber\\
	&\ \supseteq\ [k+1,2n-k-1]+(s-2)\{0,n\}-d\{0,n\} \nonumber\\
	&\ =\ [-dn+k+1,sn-k-1]
	\end{align}
	since $[k+1,2n-k-1] \cup [n+k+1,3n-k-1] \cup \{2n\}= [k+1,3n-k-1]$ and so on.
\end{proof}

\begin{lemma}\label{lemma:rich} Let $0\leq d <\delta \leq \sigma <s$ and $s+d=\sigma+\delta$. A rich set $A \subseteq [0,n]$ with generalized MSTD fringe pair $(L,R;k)$ satisfies $|sA-dA|>|\sigma A-\delta A|$.
\end{lemma}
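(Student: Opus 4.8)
The plan is to split the set of attained values of each generalized difference set into a completely-filled middle block and two fringe blocks, to observe that the two middle blocks have identical size, and then to identify the fringe blocks with exactly the quantities appearing in Definition \ref{defn:fringepair}. Since $A \subseteq [0,n]$, the set $sA-dA$ sits inside $[-dn, sn]$ and $\sigma A - \delta A$ inside $[-\delta n, \sigma n]$; because $s+d = \sigma+\delta$, both ambient intervals contain exactly $(s+d)n+1$ integers. By Lemma \ref{lemma:A+A} the middle block $[-dn+k+1, sn-k-1]$ is entirely contained in $sA-dA$, and likewise $[-\delta n+k+1, \sigma n-k-1] \subseteq \sigma A-\delta A$; each middle block has exactly $(s+d)n-2k-1$ elements, so these contribute equally to the two cardinalities. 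The hypothesis $2k<n$ guarantees that the top fringe $[sn-k,sn]$, the bottom fringe $[-dn,-dn+k]$, and the middle are pairwise disjoint, and similarly for $\sigma,\delta$.

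The heart of the argument is the fringe count. First I would show that a value $sn-w$ with $0\le w\le k$ lies in $sA-dA$ if and only if $w \in (sR+dL)\cap[0,k]$. Writing such a value as $\sum_{i=1}^s a_i - \sum_{j=1}^d b_j$ gives $w = \sum_{i=1}^s (n-a_i) + \sum_{j=1}^d b_j$, a sum of $s+d$ nonnegative integers totalling at most $k$; hence each summand is at most $k$, which forces $n-a_i \in R$ (since $a_i \ge n-k$ and $A\cap[n-k,n]=n-R$) and $b_j \in L$ (since $b_j \le k$ and $A\cap[0,k]=L$). The converse is immediate by reversing the construction, yielding a bijection between the attained top-fringe values and $(sR+dL)\cap[0,k]$, and an identical computation at the lower end matches the attained bottom-fringe values with $(sL+dR)\cap[0,k]$. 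Performing the same count for $\sigma A-\delta A$ gives
\begin{align}
|sA-dA| &\ =\ \big[(s+d)n-2k-1\big] + |(sR+dL)\cap[0,k]| + |(sL+dR)\cap[0,k]|, \nonumber\\
|\sigma A-\delta A| &\ =\ \big[(s+d)n-2k-1\big] + |(\sigma R+\delta L)\cap[0,k]| + |(\sigma L+\delta R)\cap[0,k]|.
\end{align}
Subtracting, the middle terms cancel and what remains is precisely the difference of the two sides of the inequality defining a generalized MSTD fringe pair (Definition \ref{defn:fringepair}), which is strictly positive; hence $|sA-dA| > |\sigma A-\delta A|$.

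I expect the main obstacle to be making airtight the fringe count itself, namely the claim that a value within distance $k$ of an endpoint can be realized \emph{only} by drawing all ``$+$'' elements from the top fringe $n-R$ of $A$ and all ``$-$'' elements from the bottom fringe $L$ (with the mirror statement at the other end). The nonnegativity-plus-bounded-total argument above is exactly what forces each summand to be small, and it relies essentially on $2k<n$ so that the two fringes of $A$ cannot interfere. A secondary point worth isolating is the degenerate parameter choice $s+d=2$ (forcing $(s,d,\sigma,\delta)=(2,0,1,1)$): there the sumset middle $[k+1,2n-k-1]\subseteq A+A$ comes directly from the definition of a rich set rather than from Lemma \ref{lemma:A+A}, and the corresponding middle coverage for $A-A$ should be checked separately before the cancellation is invoked.
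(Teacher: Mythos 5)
Your overall architecture is exactly the paper's: decompose $sA-dA$ and $\sigma A-\delta A$ into a middle block and two fringe blocks, identify the fringe blocks with $(sL+dR)\cap[0,k]$ and $(sR+dL)\cap[0,k]$, and let the strict inequality in Definition \ref{defn:fringepair} do the rest. Your fringe identification (the nonnegativity-plus-bounded-total argument forcing every summand into the fringes of $A$) is in fact spelled out more carefully than in the paper, which asserts the corresponding equality after a translation computation. The deviation, and the gap, is in the middle. You claim \emph{both} middles are completely filled, so that they ``contribute equally,'' and you write the formula for $|\sigma A-\delta A|$ as an equality. For $\sigma+\delta>2$ this is fine, since Lemma \ref{lemma:A+A} applies to the pair $(\sigma,\delta)$ as well. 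But in the admissible case $(s,d,\sigma,\delta)=(2,0,1,1)$ --- the classical MSTD case --- the dominated set is $A-A$, and richness, which is a hypothesis about $A+A$ only, does \emph{not} imply $[-n+k+1,n-k-1]\subseteq A-A$. Your closing suggestion that this coverage ``should be checked separately'' cannot be carried out, because it is false in general: the set $A=\{0,1,4,7,9,12,13,15,16,18,20\}\subseteq[0,20]$ satisfies $[10,30]\subseteq A+A$, hence is $9$-rich, yet $10\notin A-A$ even though $10$ lies in the middle interval $[-10,10]$. This is precisely why Zhao needed the separate, stronger notion of \emph{affluent} sets (Definition \ref{def:affluent}) when exact control of $|A-A|$ is required.

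The repair is immediate, and it is what the paper does: for the dominated set one never needs equality, only the trivial upper bound coming from the size of the ambient interval. The paper's assertion \eqref{eq:middle} is a one-sided $\geq$ for exactly this reason. Keeping your exact fringe counts (which are genuine equalities for both pairs) and replacing your second displayed equality by
\begin{equation*}
|\sigma A-\delta A|\ \le\ \bigl[(s+d)n-2k-1\bigr]+|(\sigma R+\delta L)\cap[0,k]|+|(\sigma L+\delta R)\cap[0,k]|,
\end{equation*}
the strict inequality of Definition \ref{defn:fringepair} still yields $|sA-dA|>|\sigma A-\delta A|$. So your proof is correct after this one-line change; the lesson is that the dominant set's middle must be shown full (richness does this), while the dominated set's middle should only ever be bounded above, not computed.
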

\begin{proof}
	We split $sA-dA$ into two portions, the fringe and the middle. We assert the following two statements, which are sufficient to prove the lemma:
	\begin{multline}\label{eq:fringe}
	|(sA-dA) \cap ([-dn,-dn+k] \cup [sn-k,sn]) | \\ >\ |(\sigma A-\delta A) \cap ([-\delta n,-\delta n+k] \cup [\sigma n-k,\sigma n])|
	\end{multline} and
	\begin{multline}\label{eq:middle}
	|(sA-dA) \cap [-dn+k+1,sn-k-1]| \\ \geq\ |(\sigma A-\delta A) \cap [-\delta n+k+1, \sigma n-k-1]|.
	\end{multline}
	Since $A$ is a rich set, $(sA-dA) \supseteq [-dn+k+1,sn-k-1]$. Thus \eqref{eq:middle} follows. To show \eqref{eq:fringe}, we notice that
	\begin{multline}
	|(sA-dA) \cap ([-dn,-dn+k] \cup [sn-k,sn]) | \\ = \ |(sA-dA) \cap [-dn,-dn+k]| + |(sA-dA) \cap [sn-k,sn]|.
	\end{multline}
	Since $A\subseteq [0,n]$ has generalized MSTD fringe pair $(L,R;k)$, we have $L=A \cap [0,k]$ and $R=n-(A \cap [n-k,n])$.  Using $\cong$ to denote equivalence under translation and dilation, we find
	\begin{align}
	(sL+dR) \cap [0,k] &\ =\ \left(s(A\cap[0,k]) + dn - d(A\cap[n-k,n]) \right) \cap [0,k] \nonumber\\
	&\ \cong\ \left(s(A \cap [0,k]) -d(A \cap [n-k,n])\right) \cap[-dn,-dn+k] \nonumber\\
	&\ =\ (sA-dA)\cap[-dn,-dn+k].
	\end{align}
	Similarly, we have
	\begin{align}
	(sR+dL) \cap [0,k] & \ =\ (sn-s(A \cap [n-k,n]) + d(A \cap [0,k]) ) \cap [0,k] \nonumber\\
	&\ \cong\ (s(A \cap [n-k,n]) - d(A \cap [0,k]) ) \cap [sn-k, sn] \nonumber\\
	&\ = \ (sA-dA) \cap [sn-k, sn].
	\end{align}
	Now \eqref{eq:fringe} follows from the definition of a generalized MSTD fringe pair.
\end{proof}


\subsection{Minimal Fringe Pairs}
Now we show that the order we imposed on fringe pairs earlier is completely determined by $k,k'$ for fringe pairs $(L,R;k)$ and $(L',R';k')$ corresponding to a rich set $A \subseteq [0,n]$.

\begin{lemma}
	Let $A \subseteq [0,n]$ be a rich set. Let $(L,R;k)$ and $(L',R';k')$ be two generalized MSTD fringe pairs corresponding to $A$. If $k=k'$, then $(L,R;k)=(L',R';k')$. If $k<k'$, then $(L,R;k)<(L',R';k')$.
\end{lemma}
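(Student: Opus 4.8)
The plan is to treat the two cases separately. Throughout I write $B := n - A$ for the reflection, so that Definition \ref{defn:richset}, applied to both fringe pairs corresponding to $A$, gives $L = A \cap [0,k]$, $L' = A \cap [0,k']$, $R = B \cap [0,k]$, and $R' = B \cap [0,k']$. The equality case $k = k'$ is then immediate: the defining intervals $[0,k]$, $[n-k,n]$ coincide with $[0,k']$, $[n-k',n]$, so $L = L'$ and $R = R'$ and hence $(L,R;k) = (L',R';k')$.

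For the strict case $k < k'$ I verify the three requirements of the order from \eqref{def:minimalfpreq}. The two intersection conditions are routine: since $[0,k] \subseteq [0,k']$, we have $L' \cap [0,k] = A \cap [0,k] = L$ and likewise $R' \cap [0,k] = B \cap [0,k] = R$. The substance is the containment $[k+1,k'] \subseteq sL' + dR'$ together with $[k+1,k'] \subseteq sR' + dL'$, and my approach is to reduce each generalized sumset to the ordinary restricted sumset $2L'$ or $2R'$. First, $k$-richness gives $[k+1, 2n-k-1] \subseteq A + A$, and the rich-set inequalities $k < k'$ and $2k' < n$ force $k' < 2n - k - 1$, so $[k+1,k'] \subseteq A + A$. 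Next I localize: any $m \le k'$ lying in $A + A$ is a sum $a_1 + a_2$ of nonnegative elements of $A$, whence $a_1, a_2 \le k'$ and therefore $a_1, a_2 \in A \cap [0,k'] = L'$; this yields $[k+1,k'] \subseteq 2L'$. The same argument applied to $B$, which is itself $k$-rich because $B + B = 2n - (A+A) \supseteq 2n - [k+1,2n-k-1] = [k+1,2n-k-1]$, gives $[k+1,k'] \subseteq 2R'$.

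To finish I pad a two-term sum into an $(s+d)$-term one. Because $0 \le d < \delta \le \sigma < s$ forces $\delta \ge 1$ and hence $s \ge 2$, and because $0 \in L'$ and $0 \in R'$, any $m = \ell_1 + \ell_2 \in 2L'$ can be written as $\ell_1 + \ell_2$ together with $s-2$ further zeros drawn from $L'$ and $d$ zeros from $R'$, placing $m \in sL' + dR'$; symmetrically $2R' \subseteq sR' + dL'$. Combining with $[k+1,k'] \subseteq 2L'$ and $[k+1,k'] \subseteq 2R'$ gives the two required containments, and with $k < k'$ this establishes $(L,R;k) < (L',R';k')$.

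The one step that needs care, and which I regard as the crux, is the reduction of the generalized sumsets to $A+A$: it is the nonnegativity of $A$ that confines small sums to the fringe $L'$ (respectively $R'$), and it is the inequality $s \ge 2$ together with $0 \in L', R'$ that lets the padding go through. Everything else is a direct reading of the definitions, so I do not anticipate further difficulty.
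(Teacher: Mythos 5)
Your proof is correct, and it takes a route that differs in organization from the paper's. The paper works inside the generalized sumset $sA-dA$: it first pads at the level of $A$ itself (Lemma \ref{lemma:A+A}, which uses $0,n\in A$) to obtain $[-dn+k+1,sn-k-1]\subseteq sA-dA$, and then localizes, via the translation correspondence from the proof of Lemma \ref{lemma:rich}, converting fullness of the two sub-intervals $[-dn+k+1,-dn+k']$ and $[sn-k',sn-k-1]$ into the required fringe containments. You never form $sA-dA$ at all: you localize first---richness gives $[k+1,k']\subseteq A+A$, and nonnegativity forces every representation of an $m\le k'$ to use summands in $L'$ (and, after reflecting by $B=n-A$, in $R'$)---and only then pad, at the level of the fringes, using $0\in L',R'$ and $s\ge 2$ to get $2L'\subseteq sL'+dR'$ and $2R'\subseteq sR'+dL'$. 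So the two arguments use the same two ingredients, padding by zeros and localization of small sums, but in the opposite order and applied to different objects. Yours is more elementary and self-contained (it needs neither Lemma \ref{lemma:A+A} nor the computation in Lemma \ref{lemma:rich}), and it lands directly on the primed containments $[k+1,k']\subseteq sL'+dR', sR'+dL'$ that the order \eqref{def:minimalfpreq} actually demands---a point where the paper's wording is slightly loose, since its claimed equivalence is stated for the unprimed $sL+dR$, $sR+dL$, while the localization argument genuinely yields (and the definition only requires) the primed version. What the paper's route buys in exchange is economy: it reuses machinery already established rather than reproving its content.
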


\begin{proof}
	The first statement follows trivially. If $k<k'$, then we need to show $$L=L' \cap [0,k], \hspace{5mm} R=R' \cap [0,k], \hspace{5mm} [k+1,k'] \subseteq sL'+dR', sR'+dL'.$$ We have $A \cap [0,k] = L, A \cap [0,k'] = L'$, and the analogous statements for $R$ and $R'$. Since $k<k'$, $L=A \cap [0,k] = (A \cap [0,k']) \cap [0,k]=L' \cap [0,k]$. The same holds for $R$ and $R'$. Finally, we have $$[-dn+k'+1,sn-k'-1]\ \subseteq\ [-dn+k+1,sn-k-1]\ \subseteq\ sA-dA.$$ Thus $$[-dn+k+1,-dn+k']\ \subseteq\ sA-dA,\hspace{4mm} [sn-k-1,sn-k']\ \subseteq\ sA-dA.$$ An argument from the proof of Lemma \ref{lemma:rich} shows that $[-dn+k+1,-dn+k'] \subseteq sA-dA \iff [k+1,k'] \subseteq sL+dR$ and $[sn-k-1,sn-k']\subseteq sA-dA \iff [k+1,k']\subseteq sR+dL$. Therefore $(L,R;k)<(L',R';k')$.
\end{proof}

\begin{lemma}\label{lemma:minimal}
	Let $(L,R;k)$ be the minimal generalized MSTD fringe pair of a rich set $A\subseteq[0,n]$. Then $(L,R;k)$ is minimal in the partial ordering of all generalized MSTD fringe pairs. Also, for every $k<k'<n/2$, $(L',R';k')$ is also a generalized MSTD fringe pair of $A$, where $L'=A \cap [0,k'], R'=(n-A)\cap[0,k']$, and every generalized MSTD fringe pair of $A$ has this form.
\end{lemma}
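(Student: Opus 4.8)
The plan is to prove the three assertions in turn, in every case reducing the abstract ordering to the translation correspondence extracted in the proof of Lemma \ref{lemma:rich}. For a rich set $A\subseteq[0,n]$ with $L=A\cap[0,k]$ and $R=(n-A)\cap[0,k]$ that correspondence identifies the fringe quantities of the pair with fringe windows of the actual difference set,
\[|(sL+dR)\cap[0,k]|=|(sA-dA)\cap[-dn,-dn+k]|,\qquad |(sR+dL)\cap[0,k]|=|(sA-dA)\cap[sn-k,sn]|,\]
together with the analogous identities for $(\sigma,\delta)$; more generally a containment $[k_1+1,k_2]\subseteq sL+dR$ (resp.\ $sR+dL$) translates into $[-dn+k_1+1,-dn+k_2]\subseteq sA-dA$ (resp.\ $[sn-k_2,sn-k_1-1]\subseteq sA-dA$). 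This dictionary is what converts every statement about the order into a statement about which integers are realized in $sA-dA$.

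I would dispatch the middle claim first, since the others lean on it. Fix $k<k'<n/2$ and set $L'=A\cap[0,k']$, $R'=(n-A)\cap[0,k']$; then $0\in L',R'$ because $0\in L,R$, and $A$ is $k'$-rich since $[k'+1,2n-k'-1]\subseteq[k+1,2n-k-1]\subseteq A+A$ with $2k'<n$. By Lemma \ref{lemma:A+A} applied to both $(s,d)$ and $(\sigma,\delta)$ (legitimate as $\sigma+\delta=s+d\ge2$) we get $[-dn+k+1,sn-k-1]\subseteq sA-dA$ and $[-\delta n+k+1,\sigma n-k-1]\subseteq\sigma A-\delta A$. Passing from $k$ to $k'$ widens each of the four fringe windows inward by exactly $k'-k$ integers, and richness guarantees every newly swept integer already lies in the relevant difference set (for instance $[-dn+k+1,-dn+k']\subseteq sA-dA$, using $k+k'<n\le(s+d)n-1$). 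Hence, through the correspondence, each of the four fringe counts grows by precisely $k'-k$, so both sides of the fringe inequality increase by $2(k'-k)$ and the strict inequality at level $k$ persists at level $k'$; thus $(L',R';k')$ is a generalized MSTD fringe pair of $A$.

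The third claim is then bookkeeping. By the defining shape of a fringe pair of $A$ used in the preceding lemma, any such pair of order $k'$ has first two coordinates $A\cap[0,k']$ and $(n-A)\cap[0,k']$, so it automatically has the displayed form; richness forces $2k'<n$ and the minimality of $(L,R;k)$ forces $k'\ge k$. Combined with the middle claim, this identifies the fringe pairs of $A$ with the chain $\{(L',R';k'):k\le k'<n/2\}$.

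The genuine work, and the step I expect to be the main obstacle, is the first claim, that $(L,R;k)$ is minimal among \emph{all} generalized MSTD fringe pairs and not merely among those attached to $A$. I would argue by contradiction, assuming a global fringe pair $(L'',R'';k'')<(L,R;k)$. The order relation then forces $k''<k$, $L''=L\cap[0,k'']=A\cap[0,k'']$, $R''=R\cap[0,k'']=(n-A)\cap[0,k'']$, and the two containments $[k''+1,k]\subseteq sL+dR$ and $[k''+1,k]\subseteq sR+dL$. The delicate point is to feed these last two containments through the correspondence: they yield $[-dn+k''+1,-dn+k]\subseteq sA-dA$ and $[sn-k,sn-k''-1]\subseteq sA-dA$, which glue onto the level-$k$ richness $[-dn+k+1,sn-k-1]\subseteq sA-dA$ to produce the full window $[-dn+k''+1,sn-k''-1]\subseteq sA-dA$. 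In other words, the order hypotheses manufacture exactly the richness of $A$ at the smaller order $k''$; and since, by the proof of Lemma \ref{lemma:rich}, the fringe-pair-of-$A$ structure requires only richness of the single set $sA-dA$ (the middle comparison for $\sigma A-\delta A$ being automatic from $\sigma+\delta=s+d$), it follows that $(L'',R'';k'')$ is itself a generalized MSTD fringe pair of $A$ of order $k''<k$. This contradicts the minimality of $(L,R;k)$ among the fringe pairs of $A$, completing the argument.
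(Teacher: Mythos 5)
Your dictionary between fringe windows of a pair and the extreme windows of $sA-dA$ is set up correctly, and your second and third claims follow essentially the paper's route (the paper dismisses them with ``a simple computation''; your count-growth argument makes that computation explicit). The genuine gap is the last step of your first claim. By Definition \ref{defn:richset}, ``$(L'',R'';k'')$ is a fringe pair of $A$'' requires $[k''+1,2n-k''-1]\subseteq A+A$, and the minimality hypothesis you want to contradict is a statement about exactly this notion: there is no $k''<k$ at which $A$ is $k''$-rich \emph{in the $A+A$ sense} with MSTD induced pair. What you derive from the order relation is $[-dn+k''+1,sn-k''-1]\subseteq sA-dA$, which for $s+d\ge 3$ is strictly weaker: $[k''+1,k]\subseteq sL+dR$ does not imply $[k''+1,k]\subseteq A+A$ (for instance, with $(s,d)=(3,0)$ and $L=\{0,3,5\}$ one has $11=3+3+5\in 3L$ but $11\notin L+L$, and $(A+A)\cap[0,k]=(L+L)\cap[0,k]$). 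Your sentence that ``the fringe-pair-of-$A$ structure requires only richness of the single set $sA-dA$'' silently replaces Definition \ref{defn:richset} by a weaker one; under that replacement the pair you produce no longer contradicts the minimality hypothesis, which was formulated for the original definition. So the contradiction is not reached.

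In fairness, you have located a real soft spot rather than invented one: the paper's own proof at this exact point simply asserts that the order conditions yield the required $A+A$ containment, with no more justification than yours, and that assertion is equally problematic when $s+d\geq 3$. The root cause is that the paper defines the order on fringe pairs via $sL'+dR'$ and $sR'+dL'$ (which govern the ends of $sA-dA$), whereas Zhao's original order---the one that makes this minimality lemma work for MSTD sets---uses $L'+L'$ and $R'+R'$, precisely the sets that govern $A+A$ near $0$ and $2n$; with that order condition both the paper's argument and yours close up. Separately, a small repairable slip in your middle claim: Lemma \ref{lemma:A+A} requires $s+d>2$, not $\ge 2$, so for $(\sigma,\delta)=(1,1)$ richness does not make $A-A$ full in its middle (this is exactly why affluent sets exist), and the two $(\sigma,\delta)$-counts need not grow by exactly $k'-k$; they grow by \emph{at most} $k'-k$ while the $(s,d)$-counts grow by exactly $k'-k$, so the strict inequality still persists, but the step should be stated as an inequality rather than an equality.
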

\begin{proof}
	Suppose $(L,R;k)$ is not minimal. Then there is a generalized MSTD fringe pair $(L',R';k')<(L,R;k)$. So $k'<k$ and  $$L'=L \cap [0,k'], \hspace{5mm} R'=R \cap [0,k'], \hspace{5mm} [k+1,k'] \subseteq sL'+dR', sR'+dL'.$$ This implies that $[k+1,2n-k-1] \subseteq A+A$ and therefore $A$ is rich with generalized MSTD fringe pair $(L',R';k')$ as well, which contradicts the minimality of $(L,R;k)$ when attached to $A$.
	
	So $(L,R;k)$ is the minimal generalized MSTD fringe pair of $A$. Take $k<k'$. Then the only possible generalized MSTD fringe pair is $(L',R';k')$ where \[L' \ =\ A \cap [0,k'],\ \ \ R'\ = \ A \cap [0,k'].\] By the previous lemma, $(L',R';k')>(L,R;k)$. A simple computation confirms that $(L',R';k')$ is also a generalized MSTD fringe pair.
\end{proof}

The above results show that we may count all generalized MSTD sets by counting all minimal generalized MSTD fringe pairs.

\begin{remark} Iyer et al. \cite{ILMZ} proved that the proportion of generalized MSTD sets is bounded below by a positive number as $n$ goes to infinity. Zhao's methods should easily generalize to show that this proportion converges. Our definition of a rich set is identical to Zhao's. The only difference is the nature of the fringe pairs, but the fringe pairs are not involved in the proof of convergence.
	
	Zhao introduced \emph{$k$-affluent sets}, sets $A$ for which $[k+1,2n-k-1]\subseteq A+A$ and $[-n+k+1,n-k-1]\subseteq A-A$ (see Definition \ref{def:affluent}), in order to show that for any $m\in \Z$ the proportion of $A$ which satisfy $|A+A|-|A-A|=m$ converges as $n \to \infty$. The analogous result for generalized MSTD sets does not require affluent sets (we use affluent sets in this paper, albeit for a different purpose). Since we may assume that $s+d=\sigma +\delta \geq 3$, both $sA-dA$ and $\sigma A-\delta A$ contain a copy of $A+A$, up to a minus sign. Thus all the convergence results in \cite{Zh2} should be applicable to the generalized MSTD case. We do not pursue this path as it is too repetitive to merit detailing.
\end{remark}

\bigskip


\section{Efficient constructions of $k$-generational sets and their generalizations} \label{sec:kgen}


\subsection{Previous Constructions}
In \cite{ILMZ}, Iyer et al. gave a construction of \emph{$k$-generational sets}, which are sets $A$ such that $A, A+A, \ldots, kA$ are all MSTD. The primary tool was a technique called base expansion, summarized in the following proposition.

\begin{proposition}[Iyer et al. \cite{ILMZ}]
	Fix a positive integer $k$. Say $A_1,\ldots,A_k \subseteq \Z^+$. Choose some $m>k \cdot \max (\cup_{i=1}^k A_k)$. Let $C=A_1 + m \cdot A_2 + \cdots + m^{k-1} \cdot A_k$, where $\cdot$ denotes scalar multiplication. Then $$|sC-dC| \ = \     \prod_{j=1}^k|sA_j-dA_j|$$ for all $s+d\leq k$.
\end{proposition}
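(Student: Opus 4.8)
The plan is to exploit the base-$m$ structure of $C$ so that addition and subtraction of elements of $C$ decouple into independent operations on each $A_j$, with no interaction (``carrying'') between the blocks. Write $M = \max(\bigcup_{i=1}^k A_i)$, so that the hypothesis reads $m > kM$. First I would observe that every element of $C$ has the form $\sum_{j=1}^k m^{j-1} a_j$ with $a_j \in A_j$, and hence every element of $sC - dC$ can be written as $\sum_{l=1}^s c_l - \sum_{l=1}^d c'_l$ with each $c_l, c'_l$ of this form. Grouping the contributions by powers of $m$ gives
\be \sum_{l=1}^s c_l - \sum_{l=1}^d c'_l \ = \ \sum_{j=1}^k m^{j-1}\Big(\sum_{l=1}^s a_{j,l} - \sum_{l=1}^d a'_{j,l}\Big), \ee
and the inner quantity $b_j := \sum_{l=1}^s a_{j,l} - \sum_{l=1}^d a'_{j,l}$ is precisely a generic element of $sA_j - dA_j$. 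Conversely any choice of $b_j \in sA_j - dA_j$ arises this way, so as sets
\be sC - dC \ = \ (sA_1 - dA_1) + m\cdot(sA_2-dA_2) + \cdots + m^{k-1}\cdot(sA_k - dA_k). \ee
This reduces the claim to showing that the map $(b_1,\ldots,b_k) \mapsto \sum_{j=1}^k m^{j-1} b_j$ is injective on the product $\prod_{j=1}^k (sA_j - dA_j)$, since injectivity immediately yields $|sC-dC| = \prod_{j=1}^k |sA_j - dA_j|$.

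The heart of the argument, and the step I expect to be the only real obstacle, is this injectivity. Suppose $\sum_{j=1}^k m^{j-1} b_j = \sum_{j=1}^k m^{j-1} b'_j$ with $b_j, b'_j \in sA_j - dA_j$, and set $e_j = b_j - b'_j$, so that $\sum_{j=1}^k m^{j-1} e_j = 0$. Because each $A_j \subseteq [1, M]$, every element of $sA_j - dA_j$ lies in $[s - dM, sM - d]$, an interval of length $(s+d)(M-1)$; hence $|e_j| \le (s+d)(M-1) \le k(M-1) < kM < m$, using $s + d \le k$. Now reduce $\sum_{j=1}^k m^{j-1} e_j = 0$ modulo $m$ to get $e_1 \equiv 0 \pmod{m}$, which forces $e_1 = 0$ by the bound $|e_1| < m$; dividing the relation by $m$ and repeating gives $e_2 = 0$, and inductively $e_j = 0$ for all $j$. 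Thus $b_j = b'_j$ for every $j$, establishing injectivity and completing the proof.

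The condition $m > kM$ is exactly what guarantees that the ``digits'' never interfere, and the restriction $s + d \le k$ enters only through the digit bound $|e_j| < m$. I would present the decoupling identity and the modular descent as the two self-contained claims, noting that the same computation verifies both directions of the set equality for $sC - dC$; everything else is bookkeeping on the ranges of the $b_j$.
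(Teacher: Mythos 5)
Your proof is correct. Note that this paper never proves the proposition itself --- it is quoted from Iyer et al.\ \cite{ILMZ} as background for the base-expansion method --- and your argument (the decoupling identity $sC-dC=(sA_1-dA_1)+m\cdot(sA_2-dA_2)+\cdots+m^{k-1}\cdot(sA_k-dA_k)$ followed by digit-uniqueness via the bound $|e_j|\le (s+d)(M-1)<m$ and modular descent) is essentially the same no-carrying argument used in \cite{ILMZ}, correctly exploiting both the hypothesis $m>kM$ and the restriction $s+d\le k$.
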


Using base expansion, we may choose $A_j$ such that $|jA_j+jA_j|>|jA_j-jA_j|$ (and $|sA_j-dA_j|=|\sigma A_j-\delta A_j|$ for $s+d=\sigma +\delta  \neq 2j$) and create the appropriate $C$ prescribed above, which is $k$-generational. However, a major drawback of base expansion is that the set $C$ grows large very quickly (we explore this issue in greater detail in \S\ref{sec:ratio}, where we investigate the ratio of the logarithms of the cardinalities). According to the construction in \cite{ILMZ}, the middle of $A_j$ has at least $2(2jr-4j+1)$ elements where $r=4j+2$. Thus $|A_j|=\Omega(j^2)$, which means there is a constant $c$ such that $c j^2 \le |A_j|$. By the proof of Lemma 4.3 in \cite{ILMZ}, we have $|C|=\prod_{j=1}^k |A_j|$. Therefore $|C|=\Omega(k!^2)$.

This is a huge growth rate that leads to sets which are computationally impractical to work with even when $k=2$. For example, an optimization of the construction with base expansion yields a 5-generational set of $2,685,375$ elements. Our construction reduces this size to $35$ (see Proposition \ref{prop:k-gen}). We are able to create much more reasonably sized sets by choosing appropriate fringe pairs and filling out the middle rather than using base expansion, which has no regard for the size of the set.


\subsection{Proof of Theorem \ref{thm:efficientAOqsquared}}\label{pf:efficientkgen}
We first prove the existence of a $k$-generational fringe pair in Proposition \ref{prop:kgenimprove}. In Proposition \ref{prop:k-gen}, we construct a rich set with the $k$-generational fringe pair from Proposition \ref{prop:kgenimprove}, giving us a $k$-generational set. We note that our rich set varies linearly with $k$ to complete the proof.

\begin{lemma} \label{0mq}
	Let $R=[0,m] \cup \{q\}$.  Then \[kR\ =\ [0,km] \cup [q, q+(k-1)m] \cup \cdots \cup[(k-1)q, (k-1)q +m]  \cup \{kq\}.\] Further, if $q>km$, then \[|kR|\ = \ \frac{(mk+2)(k+1)}{2}.\]
\end{lemma}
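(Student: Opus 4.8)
The plan is to compute $kR$ directly by expanding the sum of $k$ elements drawn from $R = [0,m] \cup \{q\}$, organized according to how many of those $k$ summands equal $q$. First I would observe that any element of $kR$ is a sum of $k$ terms, each either lying in $[0,m]$ or equal to $q$; grouping by the number $j \in \{0,1,\dots,k\}$ of summands equal to $q$ gives the decomposition
\begin{equation}
kR \ = \ \bigcup_{j=0}^{k} \bigl( jq + (k-j)[0,m] \bigr).
\end{equation}
Since $(k-j)[0,m] = [0,(k-j)m]$ (a sumset of $k-j$ copies of an interval is the interval of the summed endpoints), the $j$-th piece is exactly $[\,jq,\ jq+(k-j)m\,]$, which matches the stated union. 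The $j=k$ term contributes the singleton $\{kq\}$ (here $(k-j)m = 0$), and the $j=0$ term contributes $[0,km]$, so this reproduces the displayed formula for $kR$ verbatim.

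For the cardinality count under the hypothesis $q > km$, the key point is that these $k+1$ intervals are pairwise disjoint, so I can simply add their lengths. To see disjointness, I would note that the $j$-th interval ends at $jq + (k-j)m$ while the $(j{+}1)$-st begins at $(j{+}1)q$; the gap between them is $(j{+}1)q - \bigl(jq + (k-j)m\bigr) = q - (k-j)m$, which is positive precisely because $q > km \geq (k-j)m$. Hence consecutive intervals do not overlap (and non-consecutive ones are even farther apart), giving a genuinely disjoint union.

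Finally I would sum the cardinalities. The $j$-th interval $[\,jq,\ jq+(k-j)m\,]$ has $(k-j)m + 1$ integer points, so
\begin{equation}
|kR| \ = \ \sum_{j=0}^{k} \bigl( (k-j)m + 1 \bigr) \ = \ m\sum_{j=0}^{k} (k-j) + (k+1) \ = \ m\cdot\frac{k(k+1)}{2} + (k+1),
\end{equation}
and factoring out $(k+1)/2$ yields $\frac{(k+1)(mk+2)}{2}$, the claimed value. I do not expect any serious obstacle here: the only step requiring care is verifying that the intervals are disjoint, which is exactly where the hypothesis $q > km$ is used, and the arithmetic of the closed-form sum is routine.
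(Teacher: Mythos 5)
Your proposal is correct and follows essentially the same route as the paper: the paper applies the identity $k(A \cup B) = kA \cup ((k-1)A+B) \cup \cdots \cup kB$ with $A=[0,m]$, $B=\{q\}$, which is precisely your decomposition by the number $j$ of summands equal to $q$, and then sums the interval lengths. Your write-up is somewhat more careful than the paper's (which leaves the disjointness under $q>km$ implicit), but there is no substantive difference in method.
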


\begin{proof}
	Observe that \[k(A \cup B) \ = \ kA \cup ((k-1)A+B) \cup \cdots \cup (A+(k-1)B) \cup kB.\] Set $A=[0,m],B=\{q\}$ to obtain the result. Note then if $km<q$, $|kR| = \frac{(k)(k+1)m}{2} + k+1=\frac{(mk+2)(k+1)}{2}.$
\end{proof}

\begin{proposition} \label{prop:kgenimprove}
	Let $L=\{0\}, R=\{0,1,3\}$. Then the fringe pair $(L,R,6k)$ is $k$-generational in the sense that for all $1 \leq j \leq k$, $$|(2j)L \cap [0,6k]| + |(2j)R \cap [0,6k]|\ >\ 2|(jL+jR) \cap [0,6k]|.$$
\end{proposition}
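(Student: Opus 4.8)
The plan is to reduce the claimed inequality to a single fact about the iterated sumsets of $R$, and then compute those sumsets explicitly. First I would exploit that $L=\{0\}$: since summing any number of copies of $0$ yields $0$, we have $(2j)L=\{0\}$ and $jL+jR=jR$. Hence $|(2j)L\cap[0,6k]|=1$ and $|(jL+jR)\cap[0,6k]|=|jR\cap[0,6k]|$, so the desired inequality collapses to
\[ 1+|(2j)R\cap[0,6k]|\ >\ 2\,|jR\cap[0,6k]|. \]
Everything therefore reduces to understanding $mR$ for $m=j$ and $m=2j$.

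Next I would compute $mR$ explicitly. Writing an element of $mR$ as a sum of $m$ choices from $\{0,1,3\}$ using $a$ ones and $b$ threes, we get precisely $mR=\{a+3b: a,b\ge 0,\ a+b\le m\}$. I would then show $mR=[0,3m]\setminus\{3m-1\}$, so that $|mR|=3m$. Since $R=[0,1]\cup\{3\}$ has the form handled by Lemma \ref{0mq} (with the role of its parameter $m$ played by $1$ and of $q$ by $3$), that lemma already expresses $mR$ as a union of short intervals $[3i,\,2i+m]$ for $0\le i\le m-1$ together with $\{3m\}$. Because $q=3$ is small relative to $m$ these intervals overlap and merge to cover all of $[0,3m-2]$, while $3m$ is attained (take $b=m$) and $3m-1$ cannot be (it would force $b=m-1,\ a=2$, violating $a+b\le m$, or $b=m,\ a=-1$). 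This pins down $mR$ exactly.

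Finally I would assemble the pieces. Applying the formula with $m=j$ and $m=2j$ gives $|jR|=3j$ and $|(2j)R|=6j$, with supports contained in $[0,3j]$ and $[0,6j]$ respectively. Since $1\le j\le k$ we have $6j\le 6k$, so both sets lie entirely inside $[0,6k]$ and intersecting with $[0,6k]$ removes nothing; thus $|jR\cap[0,6k]|=3j$ and $|(2j)R\cap[0,6k]|=6j$. Substituting into the reduced inequality yields $1+6j>6j$, which holds for every $j$, completing the proof.

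I expect the only genuine work to be the explicit determination $mR=[0,3m]\setminus\{3m-1\}$ — in particular verifying that the intervals supplied by Lemma \ref{0mq} really do merge, leaving $3m-1$ as the unique omitted value; the reduction and the final substitution are immediate. It is worth noting that the resulting inequality is tight, holding by exactly $1$, which is precisely why the order $6k$ (and nothing smaller) is forced: it is the least order accommodating the support of $(2k)R$, whose largest element is $6k$.
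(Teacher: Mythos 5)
Your proposal is correct and follows essentially the same route as the paper: both exploit $L=\{0\}$ to reduce the claim to counting $jR$ and $(2j)R$, apply Lemma \ref{0mq} with $m=1$, $q=3$, and conclude from $mR=[0,3m]\setminus\{3m-1\}$ that the inequality reads $1+6j>6j$. The only difference is that you explicitly verify the interval-merging and the non-attainability of $3m-1$, details the paper's proof states without justification.
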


\begin{proof}
	Fix $1 \leq j \leq k$. In Lemma \ref{0mq} set $m=1,q=3,k=2j$ to get $(2j)R=[0,6j]\setminus\{6j-1\}$. Also, $jL+jR=[0,3j]\setminus\{3j-1\}$. Therefore \[|(2j)L \cap [0,6k]| + |(2j)R \cap [0,6k]|\ =\ 1 + 6j, \hspace{4mm} 2|(jL+jR) \cap [0,6k]|=2\cdot 3j. \]
\end{proof}

\bigskip
By Lemma \ref{lemma:rich}, we may pick any rich set $A$ with fringe pair $(\{0\},\{0,1,3\};6k)$ and $A$ is $k$-generational. In Proposition \ref{prop:k-gen}, we explicitly construct such a set.

\bigskip

\begin{proposition}\label{prop:k-gen}
	Let $A=\{0\} \cup [6k+1,12k+1] \cup (18k+2-\{0,1,3\})$. Then $A$ is a rich set with $k$-generational fringe pair $(L,R,6k)$ and thus $k$-generational.
\end{proposition}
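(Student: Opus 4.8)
The plan is to verify the two defining conditions of a $k$-rich set (Definition \ref{defn:richset}) for the explicit set $A=\{0\} \cup [6k+1,12k+1] \cup (18k+2-\{0,1,3\})$, taken inside $[0,n]$ with $n=18k+2$, and with fringe parameter $6k$. The first condition, $2\cdot 6k < 18k+2$, is immediate. The second set of conditions is the fringe matching: I must check that $A \cap [0,6k] = L = \{0\}$ and $A \cap [n-6k,n] = n-R$, i.e. $A \cap [12k+2,18k+2] = (18k+2)-\{0,1,3\} = \{18k+2,18k+1,18k-1\}$. These are both visible by inspection from the three blocks defining $A$, once I confirm the blocks are disjoint and correctly placed; the only care needed is noting that the middle block $[6k+1,12k+1]$ ends just below $12k+2$ and the right fringe lives entirely in $[12k+2,18k+2]$, so the three pieces do not overlap for $k\geq 1$.

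The substantive step is the richness condition $[6k+1, 2n-6k-1] = [6k+1, 36k+3] \subseteq A+A$. Here I would exploit the structure of $A$ directly. The central block $[6k+1,12k+1]$ is a genuine interval of integers, and I would use the elementary fact that an interval $[a,b]$ of length $\geq 1$ satisfies $[a,b]+[a,b] = [2a,2b]$, giving $[12k+2, 24k+2] \subseteq A+A$ at once. To cover the lower fringe $[6k+1, 12k+1]$, I add $0 \in A$ to the central block, yielding $\{0\}+[6k+1,12k+1] = [6k+1,12k+1]$. For the upper portion $[24k+3, 36k+3]$, I add the right-fringe elements $18k+2$ and $18k-1$ to the central block: $\{18k-1\}+[6k+1,12k+1] = [24k, 30k]$ and $\{18k+2\}+[6k+1,12k+1]=[24k+3,30k+3]$, and more usefully the top element pairs $(18k+2)+(18k+2)=36k+4$ is out of range but $(18k+2)+[6k+1,12k+1]$ together with $(18k+1)+[6k+1,12k+1]$ and $(18k-1)+[6k+1,12k+1]$ tile $[24k,30k+3]$ with overlap, while the very top is reached from the right fringe summed with itself. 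I would assemble these translated intervals and check that consecutive blocks overlap (so their union is a single interval with no gaps) all the way up to $2n-6k-1=36k+3$.

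The main obstacle I anticipate is bookkeeping at the top of the range: confirming that the translates of the central interval by the three right-fringe values, possibly together with sums within the right fringe itself, genuinely cover $[24k+3,36k+3]$ without leaving a gap, since the right fringe $18k+2-\{0,1,3\}$ is not itself an interval (it is missing the value $18k$). I would handle this by writing out the relevant translated intervals $v+[6k+1,12k+1]$ for $v \in \{18k-1,18k+1,18k+2\}$ and for the sums of two right-fringe elements, then checking the overlap inequalities between successive blocks explicitly; these are linear in $k$ and hold for all $k\geq 1$. Once richness is established, Proposition \ref{prop:kgenimprove} identifies $(L,R;6k)$ as a $k$-generational fringe pair, and Lemma \ref{lemma:rich} (applied with the generalized pairs $(s,d)=(2j,0)$, $(\sigma,\delta)=(j,j)$ for each $1\leq j\leq k$) then gives $|(2j)A| > |jA + jA| = |jA - (-jA)|$, so $A, A+A, \ldots, kA$ are all MSTD, completing the proof that $A$ is $k$-generational.
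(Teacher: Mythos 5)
Your overall strategy is the paper's: translate the central interval $[6k+1,12k+1]$ by $0$, by itself, and by right-fringe elements, then check that the resulting intervals concatenate. But an arithmetic error derails the execution. With $n = 18k+2$ and fringe parameter $6k$, the richness condition is $[6k+1,\,2n-6k-1] = [6k+1,\,30k+3] \subseteq A+A$, not $[6k+1,36k+3]$; you computed $2n-1$ rather than $2n-6k-1$. This is not a harmless slip, because the stronger containment you set out to prove is false. The largest sum involving the middle block is $(18k+2)+(12k+1) = 30k+3$, and sums of two right-fringe elements land only in $\{36k-2,\,36k,\,36k+1,\,36k+2,\,36k+3,\,36k+4\}$; hence $36k-1 \notin A+A$ for every $k \geq 1$, and for $k \geq 2$ the entire interval $[30k+4,\,36k-3]$ is missing from $A+A$. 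The ``bookkeeping at the top'' you flagged as the main anticipated obstacle is therefore not bookkeeping at all --- it is an unclosable gap, and no combination of the translates you list, nor the right-fringe self-sums, can fill it.

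The fix is simply to use the correct right endpoint. Your three translates $\{0\}+[6k+1,12k+1] = [6k+1,12k+1]$, $2[6k+1,12k+1]=[12k+2,24k+2]$, and $(18k+2)+[6k+1,12k+1]=[24k+3,30k+3]$ already concatenate exactly to $[6k+1,30k+3]$, which is all that richness requires; the translates by $18k-1$ and $18k+1$ and the right-fringe self-sums are unnecessary. With that one correction your argument collapses to the paper's proof. Your closing step --- invoking Proposition \ref{prop:kgenimprove} together with Lemma \ref{lemma:rich} for the pairs $(s,d)=(2j,0)$, $(\sigma,\delta)=(j,j)$, $1 \leq j \leq k$ --- is sound and matches how the paper deduces $k$-generationality from richness, as are your verifications of the fringe-matching conditions $A \cap [0,6k] = L$ and $A \cap [n-6k,n] = n-R$.
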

\begin{proof}
	It suffices to show that $[6k+1, 30k+3]\subseteq A+A$. Note that $|[6k+1,12k+1]|=6k+1$. Compute
	\begin{align*}
	A+A & \ \supseteq\ [6k+1,12k+1] \cup 2[6k+1,12k+1] \cup ([6k+1,12k+1]+\{18k+2\})\\
	& \ = \ [6k+1,30k+3]. 
	\end{align*}
\end{proof}

\bigskip
Since $A$ has $6k+5$ elements, the growth of the size of these $k$-generational sets $A$ is $|A|=O(k)$. This is a significant improvement over the previous best construction of $k$-generational sets, which have size $|A|=\Omega(k!^2)$.


\begin{remark}\label{rem:kgenex}
	This construction provides us with a nice 2-generational MSTD set with 17 elements,
	$$\lbrace 0, 13, 14, 15, 16, 17, 18, 19, 20, 21, 22, 23, 24, 25, 35, 37, 38 \rbrace,$$
	as well as a 3-generational set of size 23,
	$$\lbrace 0, 19, 20, 21, 22, 23, 24, 25, 26, 27, 28, 29, 30, 31, 32, 33, 34, 35, 36, 37, 53, 55, 56 \rbrace.$$
\end{remark}

\begin{remark}\label{rem:superkgen}
	One advantage of the base expansion method is that for any given sequence $\{(s_i,d_i,\sigma_i,\delta_i)\}_{i=2}^q$ with $s_i+d_i=\sigma_i+\delta_i=i$ and $s_i \neq \sigma_i$, $s_i \neq \delta_i$, the base expansion method can construct a set $A$ which satisfies $|s_iA-d_iA|>|\sigma_iA-\delta_iA|.$ Our construction is specific to $k$-generational sets. Though we do not reach the full level of generality achieved by Iyer et al., for any given sequence $\{(s_i,d_i,\sigma_i,\delta_i)\}_{i=2}^q$ with $s_i+d_i=\sigma_i+\delta_i=i$ \emph{and} $0\le d_i < \delta_i \le \sigma_i <s_i$, we describe an efficient construction of a set $A$ which satisfies $|s_iA-d_iA|>|\sigma_iA-\delta_iA|$ in \S\ref{sec:superkgen}. 
\end{remark}


\subsection{Super $k$-generational MSTD Sets} \label{sec:superkgen}
We prove the existence of a stronger form of $k$-generational sets, \emph{super $k$-generational sets},  in Proposition \ref{pf:superkgenexistence}.

\begin{definition} \label{def:superkgen}
	A \emph{super $k$-generational MSTD sets} is a set $A$ in which for all $s + d = \sigma + \delta \leq k$ with $0\le d < \delta \le \sigma <s$, $|sA - dA|>|\sigma A - \delta A|$.
\end{definition}

\begin{lemma}
	Consider the set $\{0,1,q\}$ with $q>2$.  We have $k\{0,1,q\} = [0,k] \cup [q, q+k-1] \cup [2q, 2q + k -2] \cup \cdots \cup \{kq\}$.
\end{lemma}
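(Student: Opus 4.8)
The plan is to prove the formula for $k\{0,1,q\}$ by the same decomposition used in Lemma~\ref{0mq}, now carried out one level deeper because the base block $[0,1]$ is an interval of length two rather than a single point. First I would apply the identity
\[
k(A \cup B)\ =\ kA \cup ((k-1)A + B) \cup \cdots \cup (A + (k-1)B) \cup kB,
\]
but with $A = [0,1]$ and $B = \{q\}$. Since $j[0,1] = [0,j]$ for every $j \geq 0$, the term $(k-j)[0,1] + j\{q\}$ is precisely the translate $[0,k-j] + jq = [jq,\, jq + (k-j)]$. Running $j$ from $0$ to $k$ then produces exactly the claimed union $[0,k] \cup [q,q+k-1] \cup [2q,2q+k-2] \cup \cdots \cup \{kq\}$, where the last block $\{kq\}$ is the degenerate interval of length zero coming from $j=k$.

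The one point that needs a genuine (if brief) argument is \emph{why no cross terms appear and why the listed blocks are the complete answer}. Every element of $k\{0,1,q\}$ is a sum of $k$ elements each equal to $0$, $1$, or $q$; grouping by how many summands equal $q$, say $j$ of them, the remaining $k-j$ summands each contribute $0$ or $1$, giving a total in $jq + [0,k-j]$. This shows the set is contained in the union; the reverse inclusion is immediate since each such representation is achievable. Thus the decomposition identity is really just a bookkeeping of this count, and the union description follows with no hypothesis beyond $q > 2$ needed to name the blocks.

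The main obstacle, and the reason the hypothesis $q > 2$ is stated, is that the blocks as written overlap or merge unless $q$ is large enough relative to $k$; the clean ``$\cup$'' picture with the stated endpoints is only literally a disjoint union (and hence only then gives a clean cardinality count as in Lemma~\ref{0mq}) when consecutive blocks do not touch, i.e. when $q > k$ so that $jq + (k-j) < (j+1)q$ for all $j$. I would therefore note that the set-equality itself holds for all $q > 2$ as a union of intervals (possibly overlapping), and that the cardinality refinement analogous to Lemma~\ref{0mq} requires the separation condition $q > k$; in the intended application $q$ is chosen large, so this causes no difficulty. The proof is then a two-line invocation of the $k(A \cup B)$ expansion with $A=[0,1]$, $B=\{q\}$, exactly mirroring the preceding lemma.
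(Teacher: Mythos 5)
Your proof is correct and follows essentially the same route as the paper: the paper derives this lemma directly from Lemma~\ref{0mq} (the case $m=1$, $A=[0,1]$, $B=\{q\}$ of the decomposition $k(A\cup B)=\bigcup_{j=0}^{k}\left((k-j)A+jB\right)$), which is exactly the expansion you carry out. Your additional remarks --- that the set equality needs no separation hypothesis while the clean cardinality count $|k\{0,1,q\}|=\tfrac{(k+1)(k+2)}{2}$ requires $q>k$ --- agree with the paper's note immediately following the lemma.
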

\begin{proof}
	This is a consequence of Lemma \ref{0mq}.
\end{proof}
Note that if $k<q$, then $|k\{0,1,q\}| = \frac{(k+1)(k+2)}{2}$.  Similarly if $k=q$ then $|k\{0,1,q\}| = \frac{(k+1)(k+2)}{2}-1.$

\begin{proposition} \label{pf:superkgenexistence}
	Let $L = \{0\}, R = \{0,1,q\}, k=q^2$ where $q>2$.  Then for any $s+d = \sigma  + \delta \le q$ with $0\le d < \delta \le \sigma <s$, $(L,R;k)$ is a generalized MSTD fringe pair. 
\end{proposition}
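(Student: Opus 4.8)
The plan is to exploit the fact that $L=\{0\}$ to collapse the fringe-pair inequality into a statement about $R=\{0,1,q\}$ alone. Since $L=\{0\}$, every sumset $sL,dL,\sigma L,\delta L$ equals $\{0\}$, so $sL+dR=dR$, $\ sR+dL=sR$, $\ \sigma L+\delta R=\delta R$ and $\sigma R+\delta L=\sigma R$. The defining inequality of Definition \ref{defn:fringepair} therefore reduces to
\[
|(dR)\cap[0,k]|+|(sR)\cap[0,k]|\ >\ |(\delta R)\cap[0,k]|+|(\sigma R)\cap[0,k]|.
\]
Next I would observe that, because $0\le d<\delta\le\sigma<s$ and $s+d=\sigma+\delta\le q$, every multiplier $m\in\{d,\delta,\sigma,s\}$ satisfies $m\le q$. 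By the preceding lemma $\max(mR)=mq\le q^{2}=k$, so each set $mR$ lies entirely inside $[0,k]$ and the intersections are harmless: $|(mR)\cap[0,k]|=|mR|$. This turns the goal into the purely numerical inequality $|dR|+|sR|>|\delta R|+|\sigma R|$.

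I would then substitute the explicit cardinality count following that lemma. Writing $f(m)=\tfrac{(m+1)(m+2)}{2}$, we have $|mR|=f(m)$ whenever $m<q$ (this also holds at $m=0$, where $f(0)=1$) and $|mR|=f(m)-1$ when $m=q$. Ignoring the correction term momentarily, the key algebraic identity is that, because $s+d=\sigma+\delta$, the linear and constant parts of $f$ cancel, leaving
\[
f(s)+f(d)-f(\sigma)-f(\delta)\ =\ \tfrac{1}{2}\big(s^{2}+d^{2}-\sigma^{2}-\delta^{2}\big)\ =\ \sigma\delta-sd,
\]
using $s^{2}+d^{2}=(s+d)^{2}-2sd$ and likewise for $\sigma,\delta$. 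Since $(s,d)$ is the more spread-out pair of fixed sum, its product is strictly smaller, so $\sigma\delta-sd\ge 1$; this settles the generic case in which none of the four multipliers equals $q$ and no correction term appears.

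The main obstacle is the $-1$ correction, which arises precisely when a multiplier equals $q$. Because $\sigma<s$ and $\delta\le\sigma$, none of $\sigma,\delta,d$ can equal $q$, so the only correction is to $|sR|$ when $s=q$, where the target inequality becomes $f(d)+f(q)-1>f(\sigma)+f(\delta)$, i.e.\ $\sigma\delta-sd>1$. Here I would use the hypotheses decisively: $s=q$ together with $s+d\le q$ forces $d=0$, so $\sigma\delta-sd=\sigma\delta$ and $\sigma+\delta=q$. Since $q>2$ we get $\sigma+\delta\ge 3$ with $\sigma\ge\delta\ge 1$, which excludes $\sigma=\delta=1$ and hence forces $\sigma\delta\ge 2>1$. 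This closes the corrected case and completes the verification that $(L,R;k)$ is a generalized MSTD fringe pair. The only genuinely delicate point is thus the interplay between the $q>2$ hypothesis and the single overlap in $qR$; everything else is the convexity (majorization) phenomenon packaged by the identity $\sigma\delta-sd$.
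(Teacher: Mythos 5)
Your proof is correct and follows essentially the same route as the paper's: exploit $L=\{0\}$ together with the lemma on $m\{0,1,q\}$ to reduce the fringe-pair inequality to the purely numerical fact $\tfrac{1}{2}(s^2+d^2-\sigma^2-\delta^2)=\sigma\delta-sd\ge 1$, with a $-1$ correction only when a multiplier equals $q$. If anything, your case split (the correction occurs iff $s=q$, which forces $d=0$, $\sigma+\delta=q\ge 3$, hence $\sigma\delta\ge 2$) is slightly sharper than the paper's split into $d=0$ and $d\neq 0$, since the paper's Case~1 inserts the $-1$ slack uniformly and its stated criterion $s^2>\sigma^2+\delta^2$ is borderline at $(s,d,\sigma,\delta)=(2,0,1,1)$ --- an edge case your argument disposes of cleanly.
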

\begin{proof}
	\ \\
	Case 1: Let $d = 0$.  Observe that $sL = \{0\}, sR = [0,s] \cup [q, q+s-1] \cup \cdots \cup \{sq\}$.  Also $\sigma L + \delta R = [0,\delta ] \cup [q, q+\delta -1]\cup \cdots \cup \{\delta q\}$ and $\delta L + \sigma R = [0,\sigma ] \cup [q, q+\sigma -1]\cup \cdots \cup \{\sigma q\}.$  Therefore $$|(sL+dR) \cap [0,k]|+|(sR+dL) \cap [0,k]|\ \ge \ 1+\frac{(s+1)(s +2)}{2} -1$$ and $$|(\sigma L+\delta R) \cap [0,k]| + |(\sigma R+\delta L) \cap [0,k]|\ \le \ \frac{(\sigma +1)(\sigma +2)}{2} + \frac{(\delta +1)(\delta +2)}{2}.$$  
	Since $s>\sigma $ and $\sigma, \delta $ are nonzero, it is clear that $s^2 >\sigma ^2 + \delta ^2$ (remember $d=0$ in this case, so $s = \sigma + \delta$).  Thus \begin{equation*}\begin{split}|(sL+dR) \cap [0,k]|&+|(sR+dL) \cap [0,k]| \\ &>\ |(\sigma L+\delta R) \cap [0,k]| + |(\sigma R+\delta L) \cap [0,k]|,\end{split} \end{equation*} and $(L,R;k)$ is a generalized MSTD fringe pair.\\ \
	
	Case 2: Let $d \neq 0$.  Observe that $$s_iL + d_iR\ =\ [0,d_i] \cup [q, q+d_i-1]\cup \cdots \cup \{d_iq\}$$ and $$d_iL + s_iR\ =\ [0,s_i] \cup [q, q+s_i-1]\cup \cdots \cup \{s_iq\}.$$  Then $$|(s_iL+d_iR) \cap [0,k]| + |(s_iR+d_iL) \cap [0,k]|\ =\ \frac{(s_i+1)(s_i+2)}{2} + \frac{(d_i+1)(d_i+2)}{2}.$$ We have $s^2 + d^2 > \sigma^2 + \delta^2$; this follows from $0\le d < \delta \le \sigma <s$ (thus $2\sigma \delta > 2 sd$, and the claim follows from combining that inequality with $(s+d)^2 = (\sigma + \delta)^2$). Thus \begin{equation*}\begin{split}|(sL+dR) \cap [0,k]|&+|(sR+dL) \cap [0,k]| \\ &>\ |(\sigma L+\delta R) \cap [0,k]| + |(\sigma R+\delta L) \cap [0,k]|.\end{split} \end{equation*} and $(L,R;k)$ is a generalized MSTD fringe pair.
\end{proof}

\bigskip

The existence of such a fringe pair gives an infinite family of super $k$-generational MSTD sets and proves their positive density as $n$ approaches infinity.  For the sake of completeness, we give one possible construction of a super $k$-generational set from a super $k$-generational fringe pair.

\begin{proposition}
	Let $A=\{0\} \cup [k+1,2k+2] \cup (3k+3-\{0,1,q\})$ with $q>2$ and $k=q^2$. Then $A$ is a rich set with fringe pair $(L,R,k)$ for all $s + d = \sigma  + \delta \le q$ and $|A|=q^2+6$.
\end{proposition}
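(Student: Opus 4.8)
The plan is to verify directly that the proposed set $A = \{0\} \cup [k+1, 2k+2] \cup (3k+3 - \{0,1,q\})$ is a rich set with the super $k$-generational fringe pair $(L,R;k)$ constructed in Proposition \ref{pf:superkgenexistence}, where $L = \{0\}$ and $R = \{0,1,q\}$. By Lemma \ref{lemma:rich}, once we establish that $A$ is rich with this fringe pair, it follows immediately that $|sA - dA| > |\sigma A - \delta A|$ for all the prescribed $(s,d),(\sigma,\delta)$, making $A$ super $k$-generational. So the real content is checking the three defining conditions of a rich set from Definition \ref{defn:richset}.

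\medskip

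First I would identify the ambient interval. Here $n = 3k+3$, and since $A \subseteq [0, 3k+3]$ we must confirm $2k < n$, i.e. $2k < 3k+3$, which is trivially true. Next I would verify the two fringe conditions. Writing the top fringe as $3k+3 - R = \{3k+3, 3k+2, 3k\}$, one checks that $A \cap [0,k] = \{0\} = L$ and $A \cap [n-k, n] = A \cap [2k+3, 3k+3] = (3k+3 - R)$, so that $n - (A \cap [n-k,n]) = R = \{0,1,q\}$; this requires only the numerical observation that the middle block $[k+1,2k+2]$ does not reach up into $[2k+3, 3k+3]$ and that the top three points $3k, 3k+2, 3k+3$ lie above $2k+3$ (using $k = q^2 > q$). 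These are routine interval comparisons.

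\medskip

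The main step — and the only one requiring genuine computation — is the richness condition $[k+1, 2n-k-1] = [k+1, 5k+5] \subseteq A + A$. The idea is exactly as in Proposition \ref{prop:k-gen}: the middle block $B = [k+1, 2k+2]$ has $k+2$ consecutive elements, so the three translates
\begin{equation*}
B, \qquad 2 \cdot B = B + B \supseteq [2k+2, 4k+4], \qquad B + (3k+3)
\end{equation*}
should tile $[k+1, 5k+5]$. Concretely, $0 + B = [k+1, 2k+2]$ covers the bottom, $B + B = [2k+2, 4k+4]$ covers the middle (overlapping the first since $2k+2 \le 2k+2$), and $(3k+3) + B = [4k+4, 5k+5]$ covers the top (overlapping the second since $4k+4 \le 4k+4$). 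Stringing these together gives $[k+1, 5k+5] \subseteq A+A$, which is the required inclusion. I would present this as a short displayed chain of set inclusions mirroring the computation in Proposition \ref{prop:k-gen}.

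\medskip

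I do not anticipate a serious obstacle: the whole proof is bookkeeping, and the only place to be careful is ensuring the block lengths and offsets are chosen so the three translates overlap with no gaps (this is exactly why the block has length $k+2$ rather than $k+1$). The final cardinality count $|A| = 1 + (k+2) + 3 = k + 6 = q^2 + 6$ follows by noting the three pieces $\{0\}$, $[k+1,2k+2]$, and $3k+3 - \{0,1,q\}$ are pairwise disjoint, again using $k = q^2 > q$ to separate the blocks.
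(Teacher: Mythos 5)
Your proposal is correct and is essentially the paper's own proof: the paper likewise reduces everything to the single inclusion $[k+1,5k+5]\subseteq A+A$ and establishes it with exactly your three translates $[k+1,2k+2]$, $2[k+1,2k+2]$, and $[k+1,2k+2]+\{3k+3\}$ (the fringe and cardinality checks you spell out are left implicit there). One slip to fix: the top fringe is $3k+3-\{0,1,q\}=\{3k+3-q,\ 3k+2,\ 3k+3\}$, not $\{3k,\ 3k+2,\ 3k+3\}$ (that is the case $q=3$); the inequality you cite, $k=q^2>q$, is precisely what puts $3k+3-q$ above $2k+2$, so the argument stands as written once that element is corrected.
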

\begin{proof}
	It suffices to show $[k+1, 5k+5] \subseteq A + A$, which follows from
	\begin{align*}
	A+A &\ \supseteq \ [k+1,2k+2] \cup 2[k+1,2k+2] \cup ([k+1,2k+2]+\{3k+3\})\\
	& \ =\ [k+1,5k+5]. 
	\end{align*}
\end{proof}

\begin{remark}\label{super4gen}
	This construction gives a super 4-generational set with 22 elements:
	$$\lbrace 0, 17, 18, 19, 20, 21, 22, 23, 24, 25, 26, 27, 28, 29, 30, 31, 32, 33, 34, 47, 50, 51 \rbrace $$
\end{remark}

\bigskip


\section{Arbitrary differences}\label{sec:arbitrarydiff}

We now turn our attention to a simple construction for attaining specific differences between $|A+A|$ and $|A-A|$, and more generally $|sA-dA|$ and $|\sigma A-\delta A|$.  Though Martin and O'Bryant and Iyer et al. have already proved that for any integer $m$, a positive percentage of sets have the property that $|A+A|-|A-A|=m$ and a positive percentage of sets have the property that $|sA-dA|-|\sigma A-\delta A|=m$ respectively, we include the proof to advocate for the loose notion that fringe pairs are a clean perspective with which to think about arbitrary differences. We first give necessary definitions for our construction before proving Theorem \ref{thm:proportionsdsigmadelta} in \S\ref{pf:proportionsdsigmadelta}.


\subsection{Preliminaries}

\begin{definition}\label{def:affluent}
	Let $k$ and $n$ be positive integers with $2k<n$.  Let $A \subseteq [0,n]$.  We say $A$ is \emph{k-affluent} with generalized MSTD fringe pair $(L,R;k)$ if $[k+1,2n-k-1] \subseteq A+A$ and $[-n+k+1,n-k-1] \subseteq A-A$.
\end{definition}

Note that an affluent set has all middle sums and differences present; therefore, discrepancies in numbers of sums and differences are completely determined by the fringes.

\begin{proposition}\label{prop:pos_diffs}
	Given $m>0$, $L=\{0\}$, $R=[0,m] \cup \{q\}$, and $k=2q$ with $q>2m$,  if $A$ is $k$-affluent with fringe pair $(L,R;k)$, then $|A+A| - |A-A| = m$.
\end{proposition}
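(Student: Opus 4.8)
The plan is to use the defining feature of affluence (Definition \ref{def:affluent}): since $A$ is $k$-affluent, both $A+A$ and $A-A$ already contain their entire middle ranges, so the value of $|A+A|-|A-A|$ is controlled completely by the fringe pieces. Concretely, I would write $A+A$ as the disjoint union of the middle $[k+1,2n-k-1]$, the low fringe $(A+A)\cap[0,k]$, and the high fringe $(A+A)\cap[2n-k,2n]$, and similarly decompose $A-A$ into the middle $[-n+k+1,n-k-1]$ together with its low fringe on $[-n,-n+k]$ and its high fringe on $[n-k,n]$. The condition $2k<n$ built into affluence guarantees these pieces are pairwise disjoint, so the middles contribute $2n-2k-1$ elements to each of $|A+A|$ and $|A-A|$ and will cancel in the difference.

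Next I would identify each fringe in terms of $L$ and $R$. Any sum landing in $[0,k]$ must come from two elements of $A\cap[0,k]=L$, so $(A+A)\cap[0,k]=(L+L)\cap[0,k]=\{0\}$; any sum landing in $[2n-k,2n]$ comes from two elements of $A\cap[n-k,n]=n-R$, and the translation/dilation argument from the proof of Lemma \ref{lemma:rich} gives $(A+A)\cap[2n-k,2n]\cong (R+R)\cap[0,k]$. For the difference set, a difference at least $n-k$ forces the subtrahend into $L=\{0\}$ and the minuend into $n-R$, so the high fringe of $A-A$ is $\cong R\cap[0,k]$; by the symmetry $A-A=-(A-A)$ its low fringe has the same cardinality. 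Thus $|A+A|=(2n-2k-1)+|(L+L)\cap[0,k]|+|(R+R)\cap[0,k]|$ while $|A-A|=(2n-2k-1)+2\,|R\cap[0,k]|$.

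It then remains to substitute $L=\{0\}$ and $R=[0,m]\cup\{q\}$ and count. By Lemma \ref{0mq} (with $k=2$) we have $R+R=[0,2m]\cup[q,q+m]\cup\{2q\}$. Here the hypotheses $q>2m$ and $k=2q$ are exactly what I need: $q>2m$ separates $[0,2m]$ from $[q,q+m]$, $m<q$ separates $[q,q+m]$ from $\{2q\}$, and $2q=k$ places every piece inside $[0,k]$, so $|(R+R)\cap[0,k]|=(2m+1)+(m+1)+1=3m+3$; likewise $R\subseteq[0,q]\subseteq[0,k]$ gives $|R\cap[0,k]|=m+2$, and $(L+L)\cap[0,k]=\{0\}$ contributes $1$. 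After cancelling the common middle term $2n-2k-1$, this yields $|A+A|-|A-A|=(1+3m+3)-2(m+2)=m$, as claimed.

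The routine but error-prone part — and the only real obstacle — is the fringe bookkeeping: verifying that every sum or difference falling in a fringe window is forced to come from fringe elements of $A$ (so that no middle element sneaks in), and checking that the two numerical hypotheses $q>2m$ and $k=2q$ simultaneously guarantee the three summands of $R+R$ are disjoint and all contained in $[0,k]$. Once these containments and the disjointness are confirmed, the cancellation of the middle terms makes the final arithmetic immediate.
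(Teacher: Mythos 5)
Your proposal is correct and takes essentially the same route as the paper: both arguments use Lemma \ref{0mq} to compute $|2L \cap [0,k]| = 1$, $|2R\cap[0,k]| = 3m+3$, and $|(L+R)\cap[0,k]| = |R\cap[0,k]| = m+2$, then invoke affluence so that the full middles of $A+A$ and $A-A$ cancel, leaving $(1+3m+3)-2(m+2)=m$. The only difference is one of exposition: the paper compresses the fringe bookkeeping into the single sentence ``the difference between sums and differences is completely determined by the fringes,'' whereas you spell out the disjoint decomposition and the correspondence of each fringe window with $L$ and $R$ --- a correct unpacking, not a different method.
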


\begin{proof}
	Observe that $|2L \cap [0,k]| = 1$ and $|2R \cap [0,k]| = 3m + 3$ by Lemma \ref{0mq}.  Furthermore, $|(L + R) \cap [0,k]| = m+2$.  It follows that $|2L \cap[0,k]| + |2R \cap[0,k]| - 2|(L+R) \cap[0,k]| = m.$  Since $A$ is affluent, the difference between sums and differences is completely determined by the fringes.  Thus $|A+A| - |A-A| = m$.
\end{proof}

\begin{proposition}\label{prop:neg_diffs}
	Given $m>0$, $L=[0,m]$, $R=[0,m] \cup \{q\}$ and $k=2q$ with $q>2m$, if $A$ is $k$-affluent with fringe pair $(L,R; k)$,  then $|A+A| - |A-A| = -m$.
\end{proposition}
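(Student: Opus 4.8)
The plan is to mirror the structure of the proof of Proposition \ref{prop:pos_diffs} almost verbatim, since the construction differs only in the choice of $L$: here we take $L=[0,m]$ rather than $L=\{0\}$. Because $A$ is assumed $k$-affluent, the middle of both $A+A$ and $A-A$ is completely filled in (all of $[k+1,2n-k-1]$ lies in $A+A$ and all of $[-n+k+1,n-k-1]$ lies in $A-A$), so the difference $|A+A|-|A-A|$ is entirely governed by the fringe contributions. Thus it suffices to compute the fringe count $|2L \cap [0,k]| + |2R \cap [0,k]| - 2|(L+R) \cap [0,k]|$ and show it equals $-m$.

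First I would compute each of the three fringe terms using Lemma \ref{0mq} and direct inspection. For $R=[0,m]\cup\{q\}$ with $q>2m$ and $k=2q$, Lemma \ref{0mq} (with the roles $A=[0,m]$, $B=\{q\}$ and exponent $2$) gives $2R = [0,2m]\cup[q,q+m]\cup\{2q\}$; intersecting with $[0,k]=[0,2q]$ retains everything, so $|2R\cap[0,k]| = (2m+1)+(m+1)+1 = 3m+3$, exactly as in the previous proposition. Next, $2L = 2[0,m] = [0,2m]$, so $|2L\cap[0,k]| = 2m+1$ (this is the term that changed: in Proposition \ref{prop:pos_diffs} it was $1$). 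Finally, $L+R = [0,m]+([0,m]\cup\{q\}) = [0,2m]\cup[q,q+m]$, and since $q>2m$ these two blocks are disjoint and both lie in $[0,k]$, giving $|(L+R)\cap[0,k]| = (2m+1)+(m+1) = 3m+2$.

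Assembling these, the net fringe contribution is
\begin{equation*}
|2L\cap[0,k]| + |2R\cap[0,k]| - 2|(L+R)\cap[0,k]| \ =\ (2m+1) + (3m+3) - 2(3m+2) \ =\ -m.
\end{equation*}
Since affluence forces every middle sum and difference to be realized, the full discrepancy $|A+A|-|A-A|$ equals this fringe discrepancy, namely $-m$. I should also verify that the hypotheses $q>2m$ and $k=2q$ guarantee the three blocks $[0,2m]$, $[q,q+m]$, $\{2q\}$ are pairwise disjoint and all contained in $[0,2q]$: disjointness of $[0,2m]$ from $[q,q+m]$ needs $q>2m$, and $q+m<2q$ (needed to separate $[q,q+m]$ from $\{2q\}$) again follows from $q>m$.

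I do not expect any genuine obstacle here, as the argument is a near-copy of the preceding proposition with a single altered fringe term; the only point requiring care is the bookkeeping of which intervals overlap, and the condition $q>2m$ is precisely what rules out collisions. The proof should therefore read: invoke affluence to reduce to the fringe, compute the three cardinalities via Lemma \ref{0mq}, and combine them to obtain $-m$.
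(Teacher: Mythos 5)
Your proposal is correct and follows essentially the same route as the paper's own proof: reduce to the fringe discrepancy via affluence, compute $|2L \cap [0,k]| = 2m+1$, $|2R \cap [0,k]| = 3m+3$, and $|(L+R) \cap [0,k]| = 3m+2$ using Lemma \ref{0mq}, and combine to obtain $-m$. The only difference is that you spell out the disjointness checks (which the paper leaves implicit), so no further changes are needed.
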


\begin{proof}
	Observe that $|2L \cap [0,k]| = 2m+1$ and $|2R \cap [0,k]| = 3m + 3$ by Lemma \ref{0mq}.  Furthermore, $(L + R) \cap [0,k] = [0,2m] \cup [q,q+m]$ and has cardinality $3m+2$.  It follows that $2|(L+R) \cap[0,k]| -(|2L \cap[0,k]| + |2R \cap[0,k]|) =m$.  Since $A$ is affluent, the difference between sums and differences is completely determined by the fringes.  Thus $|A+A|-|A-A| = -m$.
\end{proof}


Another advantage of this simple proof is that it is easily generalizable to the case of $s+d=\sigma+\delta$. This construction can be extended to attain $|sA - dA| - |\sigma A - \delta A| = (\sigma \delta - sd)m$ by again using Lemma \ref{0mq}. Then some further work shows that any arbitrary difference can be attained with a positive proportion as $n$ goes to infinity. The idea of the proof is quite simple: for positive differences $|sA-dA|-|\sigma A- \delta A|=x \geq 0$, we take the same fringe as in Proposition \ref{prop:pos_diffs}, $L=\{0\}, R=[0,m] \cup \{q\}$ with $m\geq x$. Then from $k=(s+d)q$, we simply decrease $k$ until $(L,R;k)$ has the property \begin{equation*}\begin{split}|(sL+dR) \cap [0,k]|& \ +\ |(sR+dL) \cap [0,k]| \\ &- \ |(\sigma L+\delta R) \cap [0,k]| \ - \ |(\sigma R+\delta L) \cap [0,k]| \ = \ x.\end{split} \end{equation*} This quantity is ultimately reflected in $|sA-dA|-|\sigma A-\delta A|$ if $A$ is $k$-affluent. The negative differences case is similar but for the left fringe, which we take from Proposition \ref{prop:neg_diffs} to be $L=[0,m]$.


\subsection{Proof of Theorem \ref{thm:proportionsdsigmadelta}}\label{pf:proportionsdsigmadelta}
We divide our proof into two theorems proving the positive and negative cases.

\begin{theorem}\label{thm:proportionsdsigmadeltapartI}
	Let $x \geq 0$ be a nonnegative integer and $s+d=\sigma+\delta$ with $d<\delta\leq \sigma<s$. Then the proportion of $A \subseteq [0,n]$ satisfying $|sA-dA|-|\sigma A- \delta A|=x$ is bounded below by a positive number as $n$ goes to infinity.
\end{theorem}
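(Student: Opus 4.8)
The plan is to combine the fringe-pair machinery already developed in the excerpt with a counting argument showing that $k$-affluent sets constitute a positive proportion of all subsets. The key observation, recorded in the discussion preceding Proposition \ref{prop:pos_diffs}, is that for an affluent set the entire discrepancy $|sA-dA|-|\sigma A-\delta A|$ is determined by the fringe data alone. So the strategy splits into two independent tasks: (i) exhibit, for each target value $x\geq 0$, a \emph{single} fringe pair $(L,R;k)$ whose fringe discrepancy equals exactly $x$; and (ii) show that the family of $k$-affluent sets realizing that prescribed fringe pair has density bounded below by a fixed positive constant as $n\to\infty$.

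For task (i), I would start from the fringe $L=\{0\}$, $R=[0,m]\cup\{q\}$ used in Proposition \ref{prop:pos_diffs}, with $m\geq x$ and $q$ large (say $q>(s+d)m$). Using Lemma \ref{0mq} one computes $|(sL+dR)\cap[0,k]|$, $|(sR+dL)\cap[0,k]|$, and the two $\sigma,\delta$-analogues for $k=(s+d)q$; extending the $|A+A|-|A-A|=m$ computation, this yields a fringe discrepancy of $(\sigma\delta-sd)m>0$ at the top value of $k$. The author's sketch indicates the right move: hold the fringe fixed and \emph{decrease $k$ from $(s+d)q$}, watching the discrepancy change in controlled steps as the window $[0,k]$ slides past the structured blocks of $sR+dL$ etc. Since the discrepancy starts at $(\sigma\delta-sd)m$ (which exceeds $x$ once $m\geq x$ and the multiplier $\sigma\delta-sd\geq 1$) and must reach smaller values as the window shrinks, I would argue that every integer in the relevant range is hit, so some admissible $k$ gives discrepancy exactly $x$. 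This requires checking that consecutive values of $k$ change the discrepancy by at most $1$ (no jumps), which is where the explicit block structure from Lemma \ref{0mq} does the work.

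For task (ii), I would fix the chosen $(L,R;k)$ and count subsets $A\subseteq[0,n]$ that are $k$-affluent and whose fringes match: $A\cap[0,k]=L$ and $A\cap[n-k,n]=n-R$. The fringe constraints fix only $O(k)$ coordinates, a constant number once $k$ is fixed, costing a factor $2^{-O(k)}$ which is a positive constant independent of $n$. The remaining content is the affluence condition $[k+1,2n-k-1]\subseteq A+A$ together with $[-n+k+1,n-k-1]\subseteq A-A$, conditioned on the fixed fringes. The standard Martin--O'Bryant/Zhao concentration argument (which the remark in the excerpt asserts carries over) shows that a random subset with prescribed fringes is $k$-affluent with probability tending to $1$: every middle sum and difference is realized with overwhelming probability because each such value has $\gg n$ independent representations among the free coordinates, and a union bound over $O(n)$ missing values kills the failure probability. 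Multiplying the constant fringe-matching probability by this high conditional probability gives a positive lower bound on the density.

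The main obstacle I anticipate is task (i): proving that as $k$ decreases by $1$ the fringe discrepancy changes by at most $1$ (so no integer value in the range is skipped), and that the range actually descends through $x$. This is a purely combinatorial bookkeeping step on which integers leave the window $[0,k]$ in each of the four sets $sL+dR$, $sR+dL$, $\sigma L+\delta R$, $\sigma R+\delta L$ as $k\mapsto k-1$; because these four sets can simultaneously lose an element at the cut, one must verify the \emph{net} change stays in $\{-1,0,+1\}$ rather than merely bounding each term. The explicit description of $kR$ from Lemma \ref{0mq} makes this tractable, but it is the one place where the argument is not automatic, and I would devote the bulk of the writing to pinning down that the top value $(\sigma\delta-sd)m$ dominates $x$ and that the descent is gap-free.
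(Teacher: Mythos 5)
Your proposal is essentially the paper's own proof: the same fringe pair ($L=\{0\}$, $R=[0,m]\cup\{q\}$ with $m\ge x$ and $q>(s+d)m$), the same strategy of sliding the cutoff $k$ and applying a discrete intermediate-value argument to the fringe discrepancy $f(k)$, and the same Zhao-type affluence estimate to convert a single good fringe pair into a positive proportion of subsets of $[0,n]$. The obstacle you single out is handled in the paper exactly where you predict, and the clean mechanism is the nesting $dR\subseteq \delta R\subseteq \sigma R\subseteq sR$ (valid since $0\in R$ and $d<\delta\le\sigma<s$), which reduces the step change $f(k)-f(k-1)$ to five mutually exclusive cases each with value in $\{-1,0,+1\}$; combined with the endpoint computations $f(q-1)=0$ and $f(sq)=(\sigma\delta-sd)m\ge x$, this makes the descent gap-free and completes the argument.
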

\begin{proof}
	Let $L=\{0\}$ and $R=[0,m] \cup \{q\}$ with $q>(s+d)m$. Then by Lemma \ref{0mq}, \[cR \ = \ [0,cm] \cup [q,q+(c-1)m] \cup \cdots \cup [(c-1)q,(c-1)q+m]\cup\{cq\},\] where we define $0R$ to be $\{0\}$. Notice that if $c<\gamma$, then $cR\subseteq \gamma R$. Now since $d<\delta\leq \sigma<s$, we have the relation \begin{equation}\label{eq:inclusion}dR\ \subseteq\ \delta R\ \subseteq\ \sigma R \ \subseteq\ sR.\end{equation} Now define the quantity \[f(k) \ =\ |dR \cap [0,k]| + |sR \cap [0,k]| - |\delta R \cap [0,k]| - |\sigma R \cap [0,k]|.\] Since $L=\{0\}$, the quantity $f(k)$ is also equal to \begin{equation*}\begin{split}|(sL+dR) \cap [0,k]|&+ \ |(sR+dL) \cap [0,k]| \\ &- |(\sigma L+\delta R) \cap [0,k]| \ - \ |(\sigma R+\delta L) \cap [0,k]|.\end{split} \end{equation*}
	
	We have the following implications:
	\begin{enumerate}
		\item $k \in dR \implies f(k) - f(k-1) \ = \ 0$,
		\item $k \in \delta R, \not \in dR \implies f(k)-f(k-1) \ = \ -1$,
		\item $k \in \sigma R, \not \in \delta R \implies f(k)-f(k-1) \ = \ 0$,
		\item $k \in sR, \not \in \sigma R \implies f(k)-f(k-1) \ = \ 1$,
		\item $k \not \in sR \implies f(k)-f(k-1) \ = \ 0$.
	\end{enumerate}
	
	Therefore, $f(k)$ and $f(k-1)$ differ at most by one. Since $|cR|=(mc+2)(c+1)/2$,
	\begin{align*}
	f(sq)& \ = \ \frac{(md+2)(d+1)}{2}+\frac{(ms+2)(s+1)}{2}-\frac{(m\delta+2)(\delta+1)}{2}-\frac{(m\sigma+2)(\sigma+1)}{2} \\
	& \ = \ \frac{1}{2}m(d(d+1)+s(s+1)-\delta(\delta+1)-\sigma(\sigma+1)) \\
	& \ = \ \frac{1}{2}(d^2+s^2-\delta^2-\sigma^2)m \\
	& \ = \ (\sigma \delta - sd)m.
	\end{align*}
	Since $0\le d < \delta \le \sigma <s$ implies $\sigma \delta - sd \geq 1$, $f(sq)\geq m$.
	
	Now we want an integer $k$ for which $f(k)=0$. For any $c>0$, $cR \cap [0,q-1]=[0,cm]$. Therefore, $f(q-1)=dm+1 + sm+1 -\delta m -1 -\sigma m -1=0$.
	
	We have the facts that $|f(k)-f(k-1)| \leq 1$, $f(q-1)=0$, and $f(sq)=(\sigma \delta - sd)m\geq m$. Therefore, for any given difference $x$ such that $0 \leq x \leq (\sigma \delta - sd)m$, there exists a $k$ such that $q-1 \leq k \leq sq$ and $f(k)=x$.
	
	Pick an arbitrary $x \geq 0$ and let $m\geq x$. Then let $q-1$ be large enough so that for a sufficiently large $n$, the proportion of $q$-affluent sets in $[0,n]$ is positive (see Lemma \ref{lemma:missing_diff}). Consider the fringe pair $(L,R;k)$ where $q-1 \leq k \leq sq$ and $f(k)=x$. Then the proportion of affluent sets with fringe pair $(L,R;k)$ is positive and bounded below as $n$ goes to infinity.
\end{proof}

Similarly, we can prove that for any $x<0$, a positive proportion of sets have $|sA-dA|-|\sigma A- \delta A| \ = \ x$. However, we must use a different fringe pair.

\begin{lemma}\label{lemma:neg_diffs}
	Let $0 \leq d<\delta \leq \sigma <s$ and $s+d=\sigma+\delta$ and $L=[0,m]$, $R=[0,m] \cup \{q\}$ where $q>(s+d) m$. Then \begin{multline*}sL+dR \ = \ [0,(s+d)m] \cup [q, q+(s+d-1)m] \cup \cdots \cup \\ [(d-1)q,(d-1)q+(s+1)m] \cup [dq,dq+sm]\end{multline*}  with \[|sL+dR|+|dL+sR|-|\sigma L +\delta R| - |\delta L + \sigma R|\ =\ (sd-\sigma \delta)m \leq -m.\] Furthermore, \[sL+dR \ \subseteq\ \sigma L + \delta R\ \subseteq\ \delta L + \sigma R\ \subseteq \ dL+sR.\]
\end{lemma}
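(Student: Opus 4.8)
The plan is to derive all three assertions of Lemma~\ref{lemma:neg_diffs}---the explicit form of $sL+dR$, the cardinality identity, and the inclusion chain---from the structure of $R=[0,m]\cup\{q\}$ together with the single hypothesis $q>(s+d)m$, which is exactly what keeps the blocks of these sumsets separated so that cardinalities add.

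First I would compute $sL+dR$ directly. Since $L=[0,m]$ is an interval containing $0$, we have $sL=[0,sm]$, and Lemma~\ref{0mq} decomposes $dR$ into the $d+1$ blocks $[jq,\,jq+(d-j)m]$ for $0\le j\le d$. Adding $[0,sm]$ to each block fattens it to $[jq,\,jq+(s+d-j)m]$, which is precisely the claimed union, with last block $[dq,\,dq+sm]$. The point to verify is that the blocks stay pairwise disjoint: the $j$-th block ends at $jq+(s+d-j)m$, and since $(s+d-j)m\le (s+d)m<q$ for every $j\ge0$, it ends strictly before the next block begins at $(j+1)q$. This is the only place $q>(s+d)m$ enters.

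Next, for the cardinality identity I would record the general split $aL+bR$ with $N:=s+d=a+b$ fixed. The same computation gives the disjoint union $aL+bR=\bigcup_{j=0}^{b}[jq,\,jq+(N-j)m]$ of $b+1$ intervals, so $|aL+bR|=(b+1)+m\sum_{j=0}^{b}(N-j)=(b+1)\bigl(1+\tfrac{m(2N-b)}{2}\bigr)$, a quantity depending only on the coefficient $b$ of $R$. Substituting $b=d,s,\delta,\sigma$ and forming the combination, the linear-in-$b$ and constant contributions cancel because $s+d=\sigma+\delta$, while the quadratic pieces collapse to $\tfrac{m}{2}\bigl[(\sigma^2+\delta^2)-(s^2+d^2)\bigr]$; expanding $(s+d)^2=(\sigma+\delta)^2$ rewrites this as $(sd-\sigma\delta)m$. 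The bound $(sd-\sigma\delta)m\le -m$ then follows from $\sigma\delta-sd\ge1$, already established in the proof of Theorem~\ref{thm:proportionsdsigmadeltapartI} (indeed $\sigma\delta-sd=(s-\sigma)(s-\delta)>0$, and both factors are positive integers).

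Finally, I would obtain the inclusion chain from a single monotonicity statement: if $0\le b\le b'$ and $a+b=a'+b'$ (so $a'\ge0$), then $aL+bR\subseteq a'L+b'R$. This holds because $L\subseteq R$ gives $(b'-b)L\subseteq (b'-b)R$, and iterated sumsets are additive, $cR+c'R=(c+c')R$; hence $aL+bR=a'L+(b'-b)L+bR\subseteq a'L+(b'-b)R+bR=a'L+b'R$. Reading off the increasing $R$-coefficients $d\le\delta\le\sigma\le s$ then yields the four nested sets in the stated order. I expect the only genuinely delicate point in the whole argument to be the disjointness bookkeeping underlying the first two parts, since the entire cardinality count rests on $q>(s+d)m$ keeping the blocks apart; the monotonicity argument for the inclusions is essentially formal.
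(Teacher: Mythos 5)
Your proof is correct, and for the bulk of the lemma it follows the paper's own route: you write $sL=[0,sm]$, decompose $dR$ into blocks via Lemma~\ref{0mq}, use $q>(s+d)m$ to keep the fattened blocks $[jq,\,jq+(s+d-j)m]$ pairwise disjoint, and then sum block lengths, with $s+d=\sigma+\delta$ cancelling everything except $\tfrac{m}{2}\bigl[(\sigma^2+\delta^2)-(s^2+d^2)\bigr]=(sd-\sigma\delta)m$; the paper does exactly this, only with the cardinalities written out term by term rather than through your closed form $(b+1)\bigl(1+\tfrac{m(2N-b)}{2}\bigr)$. Where you genuinely diverge is the inclusion chain. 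The paper proves it structurally: $\sigma L+\delta R$ equals $sL+dR$ together with the extra blocks $[(d+1)q,(d+1)q+(s-1)m],\ldots,[\delta q,\delta q+\sigma m]$, so each inclusion is read off from the explicit block decompositions (and hence still leans on the special form of $L$ and $R$). You instead prove the purely formal monotonicity statement: if $b\le b'$ and $a+b=a'+b'$, then $aL+bR=a'L+(b'-b)L+bR\subseteq a'L+(b'-b)R+bR=a'L+b'R$, using only $L\subseteq R$, monotonicity of sumsets, and additivity of iterated sumsets $cX+c'X=(c+c')X$. This buys generality: your inclusion argument needs neither $q>(s+d)m$ nor any structure of $L,R$ beyond $L\subseteq R$, so it transfers verbatim to other fringe choices, whereas the paper's block comparison has the side benefit of exhibiting $(\sigma L+\delta R)\setminus(sL+dR)$ explicitly. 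A final nicety: your factorization $\sigma\delta-sd=(s-\sigma)(s-\delta)$ gives a cleaner justification of $(sd-\sigma\delta)m\le -m$ than the paper's bare assertion that the ordering $0\le d<\delta\le\sigma<s$ forces $\sigma\delta-sd\ge 1$.
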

\begin{proof}
	The first claim is a routine calculation. The set $sL+dR$ can be written as
	\begin{equation*}[0,sm]+([0,dm] \cup [q,q+(d-1)m] \cup \cdots \cup [(d-1)q,(d-1)q+m] \cup \{dq\}),\end{equation*}
	which can be simplified to the desired expression. Next, we calculate $|sL+dR|+|dL+sR|-|\sigma L +\delta R| - |\delta L + \sigma R|$.
	\begin{align*}
	|sL+dR|&\ = \ ((s+d)m+1) + ((s+d-1)m+1) + \cdots +(sm+1) \\
	&\ = \ \frac{1}{2}((s+d)m+2)(s+d+1) - \frac{1}{2}((s-1)m+2)s.
	\end{align*}
	Therefore $|sL+dR|+|dL+sR|=((s+d)m+2)(s+d+1) - \frac{1}{2}((s-1)m+2)s - \frac{1}{2}((d-1)m+2)d$. Similarly, $|\sigma L+\delta R|+|\delta L+\sigma R|=((\sigma +\delta )m+2)(\sigma+\delta+1) - \frac{1}{2}((\sigma-1)m+2)\sigma - \frac{1}{2}((\delta-1)m+2)\delta$. Since $s+d=\sigma+\delta$,
	\begin{align*}
	g((s+d)q)&\ = \ \frac{1}{2}\left(((\sigma-1)m+2)\sigma + ((\delta-1)m+2)\delta\right. \\ & \hspace{4mm}\left. \ \ - ((s-1)m+2)s - ((d-1)m+2)d\right) \\
	&\ = \ \frac{1}{2}(\sigma^2+\delta^2-s^2-d^2)m\\
	&\ = \ (sd-\sigma \delta)m \leq -m.
	\end{align*}
	
	For the last claim, notice that $\sigma L+\delta R=(sL+dR) \cup [(d+1)q,(d+1)q+(s-1)m] \cup \cdots \cup [\delta q, \delta q+\sigma m]$. So $\sigma < s$ implies that $\sigma L + \delta R \supseteq sL+dR$. The chain of inclusions follows.
\end{proof}

\begin{theorem}\label{thm:proportionsdsigmadeltapartII}
	Let $x < 0$ be a negative integer and $s+d=\sigma+\delta$ with $0\le d < \delta \le \sigma <s$. Then the proportion of $A \subseteq [0,n]$ satisfying $|sA-dA|-|\sigma A- \delta A|=x$ is bounded below by a positive number as $n$ goes to infinity.
\end{theorem}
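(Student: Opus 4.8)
The plan is to mirror the proof of Theorem~\ref{thm:proportionsdsigmadeltapartI}, but using the ``negative'' fringe $L=[0,m]$, $R=[0,m]\cup\{q\}$ supplied by Lemma~\ref{lemma:neg_diffs} in place of the singleton $L=\{0\}$. Fix $q>(s+d)m$ and define
\[
g(k)\ =\ |(sL+dR)\cap[0,k]| + |(dL+sR)\cap[0,k]| - |(\sigma L+\delta R)\cap[0,k]| - |(\delta L+\sigma R)\cap[0,k]|,
\]
which is exactly the fringe-pair discrepancy for $(L,R;k)$ (recall $dL+sR=sR+dL$ and $\delta L+\sigma R=\sigma R+\delta L$). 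Both endpoints of $g$ are essentially already computed: Lemma~\ref{lemma:neg_diffs} gives $g((s+d)q)=(sd-\sigma\delta)m\le -m$, while a short check at $k=q-1$ yields $g(q-1)=0$, since each of the four sets meets $[0,q-1]$ in its first block $[0,(s+d)m]$ (the next block begins at $q>(s+d)m$), so the four equal cardinalities $(s+d)m+1$ cancel.

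The key structural input is the chain of inclusions $sL+dR\subseteq\sigma L+\delta R\subseteq\delta L+\sigma R\subseteq dL+sR$ from Lemma~\ref{lemma:neg_diffs}. Writing these nested sets as $S_1\subseteq S_2\subseteq S_3\subseteq S_4$, the increment $g(k)-g(k-1)$ depends only on which ``shell'' the integer $k$ lands in: $k\in S_1$ contributes $1+1-1-1=0$; $k\in S_2\setminus S_1$ contributes $-1$; $k\in S_3\setminus S_2$ contributes $0$; $k\in S_4\setminus S_3$ contributes $+1$; and $k\notin S_4$ contributes $0$. Hence $|g(k)-g(k-1)|\le 1$, the exact negative-sign analogue of the five implications used in Theorem~\ref{thm:proportionsdsigmadeltapartI}. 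This step-size bookkeeping for the four-fold chain is the only genuinely new point; everything else is a reflection of the positive case.

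With $|g(k)-g(k-1)|\le 1$, $g(q-1)=0$, and $g((s+d)q)=(sd-\sigma\delta)m$, the discrete intermediate value property guarantees that $g$ attains every integer value between $(sd-\sigma\delta)m$ and $0$ on $[q-1,(s+d)q]$. Given an arbitrary $x<0$, the hypothesis $0\le d<\delta\le\sigma<s$ forces $\sigma\delta-sd\ge 1$, so choosing $m\ge|x|$ makes $(sd-\sigma\delta)m\le -m\le x$, and therefore there is some $k$ with $q-1\le k\le(s+d)q$ and $g(k)=x$.

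Finally, fix such a fringe pair $(L,R;k)$ and take $A\subseteq[0,n]$ to be $k$-affluent with this fringe pair (Definition~\ref{def:affluent}). As noted there, an affluent set realizes all middle sums and differences, so $|sA-dA|-|\sigma A-\delta A|$ is determined entirely by the fringes and equals $g(k)=x$. By Lemma~\ref{lemma:missing_diff}, with $q$ fixed and $n$ large the proportion of $k$-affluent sets carrying the prescribed fringe pair is bounded below by a positive constant, completing the proof.
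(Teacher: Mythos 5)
Your proposal is correct and takes essentially the same approach as the paper: the same fringe pair $L=[0,m]$, $R=[0,m]\cup\{q\}$ from Lemma~\ref{lemma:neg_diffs}, the same discrepancy function $g$ with the shell-by-shell increment analysis giving $|g(k)-g(k-1)|\le 1$, the same endpoint evaluations $g(q-1)=0$ and $g((s+d)q)=(sd-\sigma\delta)m\le -m$, and the same discrete intermediate value argument concluded via $k$-affluent sets and Lemma~\ref{lemma:missing_diff}. No gaps to report.
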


\begin{proof}
	Let $L=[0,m]$ and $R=[0,m] \cup \{q\}$ with $q>(s+d)m$. 
	Define a similar quantity \[g(k)\ = \ |(sL+dR) \cap [0,k]| + |(dL+sR) \cap [0,k]| - |(\sigma L+\delta R) \cap [0,k]| - |(\delta L + \sigma R) \cap [0,k]|.\] It suffices to show three things: $|g(k)-g(k-1)| \leq 1$, $g(q-1)=0$, and $g((s+d)q)=(sd-\sigma \delta)m \leq -m$. We begin by claiming that $|g(k)-g(k-1)| \leq 1$.
	
	By Lemma \ref{lemma:neg_diffs}, the chain of inclusions is \[sL+dR\ \subseteq\ \sigma L + \delta R\ \subseteq\ \delta L + \sigma R\ \subseteq\ dL+sR,\] and we have the implications
	\begin{enumerate}
		\item $k \in sL+dR \implies g(k)-g(k-1) \ = \ 0$,
		\item $k \in \sigma L + \delta R, \not \in sL+dR \implies g(k)-g(k-1) \ = \ -1$,
		\item $k \in \delta L + \sigma R, \not \in \sigma L + \delta R \implies g(k)-g(k-1) \ = \ 0$,
		\item $k \in dL+sR, \not \in \delta L + \sigma R \implies g(k)-g(k-1) \ = \ 1$,
		\item $k \not \in dL + sR \implies g(k)-g(k-1) \ = \ 0$.
	\end{enumerate}
	
	\bigskip
	
	Now consider $g((s+d)q)$. By Lemma \ref{lemma:neg_diffs}, $g((s+d)q) \ = \ (sd-\sigma \delta)m \leq -m$.
	
	\bigskip
	
	Finally, consider $g(q-1)$. The intersection of each set $\left(sL+dR\right)$, $\left(\sigma L + \delta R\right)$, $\left(\delta L + \sigma R\right)$, $\left(dL+sR\right)$ with $[0,\ q-1]$ is $[0,\ (s+d)m]$. Therefore $g((s+d)m) \ = \ 0$. The rest of the proof is identical to the positive arbitrary differences proof.
\end{proof}

Together, Theorems \ref{thm:proportionsdsigmadeltapartI} and \ref{thm:proportionsdsigmadeltapartII} imply Theorem \ref{thm:proportionsdsigmadelta}.

\section{Another MSTD construction and $\frac{\log|sA-dA|}{\log|\sigma A-\delta A|}$}\label{sec:ratio}

\subsection{New Construction and Large Ratio}

We have used the fringe $L=\{0\}, R=[0,m] \cup \{q\}$ for various $m$ and $q$ in the previous sections. The choice $L=\{0\}$ has a nice property: if we fix the left fringe to be the singleton $\{0\}$, almost any right fringe gives an MSTD fringe pair. If we let $R$ be a uniform random subset and $k$ a suitably large integer (say $2 \cdot \max R$), then it is likely that $|(R+R) \cap [0,k]|+1 > 2|R \cap [0,k]|$, in which case we have an MSTD fringe pair. We make this statement precise below.

\begin{proposition}
	Let $R$ be a uniform random subset of $[0,r]$ with $r \geq 1$. Then $$\Pr(|R+R|+1>2|R|)\ > \ 1- \ \frac{(r+1)^3-(r+1)^2}{2^{r+2}}.$$
\end{proposition}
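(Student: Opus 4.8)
The plan is to reduce the statement to a counting problem about arithmetic progressions. Writing $L=\{0\}$, we have $|2L\cap[0,k]|=1$ and $|(L+R)\cap[0,k]|=|R|$, while $|2R\cap[0,k]|=|R+R|$ once $k\ge 2\max R$; thus the event $|R+R|+1>2|R|$ is exactly the condition that $(L,R;k)$ is an MSTD fringe pair (for $s=2,\,d=0,\,\sigma=\delta=1$). The first key observation is that this event almost always holds: for any nonempty finite set of integers $R=\{a_1<\cdots<a_\ell\}$, the sums $a_1+a_1<a_1+a_2<\cdots<a_1+a_\ell<a_2+a_\ell<\cdots<a_\ell+a_\ell$ are $2\ell-1$ distinct elements of $R+R$, so $|R+R|\ge 2|R|-1$ for every nonempty $R$. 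Hence $|R+R|+1>2|R|$ \emph{fails} precisely when $|R+R|=2|R|-1$, and by the classical equality case of this bound this happens if and only if $R$ is an arithmetic progression. Therefore
\[
\Pr\big(|R+R|+1\le 2|R|\big)\ =\ \frac{\#\{\text{nonempty arithmetic progressions }R\subseteq[0,r]\}}{2^{r+1}}.
\]

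Next I would bound the number of arithmetic progressions in $[0,r]$. Every AP with at least two terms is uniquely determined by its smallest element $a$, its largest element $b$, and its common difference $g$, subject to $0\le a<b\le r$ and $1\le g\le b-a$ with $g\mid(b-a)$; the singletons contribute a further $r+1$ sets. Dropping the divisibility constraint and bounding $g\le r$ gives the crude over-count
\[
\#\{\text{nonempty APs}\}\ \le\ (r+1)\ +\ \binom{r+1}{2}\,r\ =\ \frac{(r+1)(r^2+2)}{2},
\]
which for $r\ge 2$ is at most $\tfrac12\big((r+1)^3-(r+1)^2\big)$. Dividing by the $2^{r+1}$ subsets of $[0,r]$ then yields $\Pr(|R+R|+1\le 2|R|)\le \frac{(r+1)^3-(r+1)^2}{2^{r+2}}$, which is the complement of the claimed inequality; the inequality is strict because discarding the divisibility constraint is lossy (it over-counts whenever some gap $b-a$ has more than one admissible divisor).

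The main conceptual step, and the one doing all the work, is the doubling inequality $|R+R|\ge 2|R|-1$ together with its equality case: it is what converts an a priori opaque probability into a clean enumeration of arithmetic progressions. The routine-but-delicate part is then tracking the constant in the AP count so that it collapses to exactly $(r+1)^3-(r+1)^2$ after the factor of two coming from the ordering $a<b$ is absorbed into the $2^{r+2}$, and carefully verifying the equality characterization $|R+R|=2|R|-1\Leftrightarrow R$ is an AP. One must also treat the degenerate small cases separately, since for $r\le 2$ every nonempty subset of $[0,r]$ is already an AP, so the estimate is only informative once $r(r+1)^2<2^{r+2}$.
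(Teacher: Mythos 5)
Your proposal takes essentially the same route as the paper: reduce the failure event, via the doubling bound $|R+R|\ge 2|R|-1$ and its equality case, to counting arithmetic progressions in $[0,r]$, then over-count the APs by a cubic bound. Indeed your count (singletons plus ${r+1 \choose 2}\cdot r$ triples) and the paper's count ($\sum_{i<j}\tau(j-i)\le (r+1){r+1 \choose 2}$) both collapse to the same quantity $\tfrac12\left((r+1)^3-(r+1)^2\right)$. The one structural difference is that you invoke the equality characterization ($|R+R|=2|R|-1$ iff $R$ is an AP) as classical, whereas the paper proves it from scratch: after listing the $2|R|-1$ ordered sums $x_1+x_1<\cdots<x_1+x_r<x_2+x_r<\cdots<x_r+x_r$, it squeezes $x_2+x_{r-1}$, then $x_3+x_{r-1}$, and so on, to force all consecutive gaps to be equal. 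Since the fact is genuinely classical and you explicitly flag it for verification, that is a presentational difference rather than a gap.

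Where you and the paper genuinely diverge is the treatment of singletons and of small $r$; here your accounting is the more careful one, but your dismissal of the small cases is the one real flaw. Singleton sets fail the event (for $R=\{x\}$ one has $|R+R|=1=2|R|-1$), so they must be counted among the bad sets; you include them, while the paper's count $(r+1){r+1 \choose 2}$ covers only APs with distinct endpoints and silently omits the $r+1$ singletons. Your inequality $(r+1)+r{r+1 \choose 2}\le \tfrac12\left((r+1)^3-(r+1)^2\right)$ requires $r\ge 2$, and you defer $r=1$ with the remark that the estimate is only informative when $r(r+1)^2<2^{r+2}$. But at $r=1$ that condition holds ($4<8$), the bound asserts $\Pr>1/2$, and the assertion is false: the three nonempty subsets of $\{0,1\}$ are all APs, so the probability of the event is exactly $1/4$. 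Hence no separate treatment of $r=1$ can succeed; the proposition is simply false there and should be stated for $r\ge 2$ (or with a weaker constant). This defect originates in the paper itself --- its proof never notices it precisely because it forgot the singletons --- but in your write-up it surfaces as an unfillable gap: your argument is complete for $r\ge 2$ and cannot be completed for $r=1$.
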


\begin{proof}
	Recall that $|R+R|=2|R|-1$ if and only if $R$ is an arithmetic progression, and $|R+R|>2|R|-1$ otherwise. The proof is straightforward. Write the elements of $R$ as $x_1 < x_2 < \cdots < x_r$ (with $r=|R|$); as the claim is trivial for arithmetic progressions, we may assume $R$ is not an arithmetic progression. We are left with proving that we cannot have $|R+R| = 2|R| - 1$ for such $R$. We proceed by contradiction. Note $x_1 + x_1 < \cdots < x_1 + x_r$ $<$ $x_2 + x_r$ $<$ $x_3 + x_r$ $<$ $\cdots$ $<$ $x_{r-1} + x_r$ $<$ $x_r + x_r$. We have just listed $2|R| - 1$ distinct elements, and thus all other sums must be in this list. In particular, $x_2+x_{r-1}$ is less than $x_2 + x_r$ but more than $x_1 + x_{r-1}$; it must therefore equal $x_1 + x_r$, which implies $x_2-x_1 = x_r - x_{r-1}$. We then note $x_2 + x_{r-1} < x_3 + x_{r-1} < x_3 + x_r$, and thus $x_3 + x_{r-1} = x_2 + x_r$, which implies $x_3-x_2 = x_r - x_{r-1}$. Arguing similarly shows all adjacent differences are equal, proving the set is an arithmetic progression.
	
	Thus it suffices to bound the number of arithmetic progressions in $[0,r]$. There are ${r+1 \choose 2}$ pairs $(i,j)$ such that $i<j$. For each $(i,j)$, there are at most $\tau(j-i)$ arithmetic progressions starting with $i$ and ending with $j$, where $\tau(x)$ is the number of divisors of $x$. Since $\tau(j-i)\leq r+1$, we have that there are at most $(r+1){r+1 \choose 2}$ arithmetic progressions in $[0,r]$. Thus $\Pr(|R+R|+1>2|R|) \ge 1-\frac{(r+1)^3-(r+1)^2}{2^{r+2}}$.
\end{proof}

Sending $r$ to infinity yields an almost sure method to construct MSTD sets. One consequence of this construction is a set $A$ for the current largest value of $\log |A+A|/\log |A-A|$. We obtained the following $R$ by a random search through subsets of $[0,90]$, fixing $0$ and picking each element with probability $0.27.$ We then sifted through the set to add or discard obvious elements, finding \begin{multline}\label{R} R \ = \ \{0, 1, 2, 4, 5, 9, 10, 12, 23, 26, 32, 38, \\ 47, 53, 59, 61, 65, 76, 78, 79, 81, 85, 86, 88, 89\}.\end{multline}

Let $A=\{0\} \cup [k+1,n-k-1] \cup (n-R)$ for $k=2 \cdot 89,\ n=3k+2$. Then we get $\log |A+A|/\log |A-A|=1.02313$. The previous largest value of $\log |A+A|/\log |A-A|$ was $1.0208$, achieved by setting $$A\ =\ \{0, 1, 2, 4, 5, 9, 12, 13, 17, 20, 21, 22, 24, 25, 29, 32, 33, 37, 40, 41, 42, 44, 45\}$$ as found in \cite{MO}. In fact, sets $A$ for which $\log |A+A|/\log |A-A|>1.0208$ are relatively common: a random search through sets $R\subseteq [0,90]$, picking each element with probability 0.27 (this value was chosen as it yielded good results from our simulations), yielded 174 such sets out of 100,000. Since $R$ in \eqref{R} was found basically through a random search, a more sophisticated method may yield a larger value for $\log |A+A|/\log |A-A|$.


It seems unlikely that $\log |A+A|/\log |A-A|$ can exceed $1.1$, or even $1.05$. Is there a theoretical upper bound? A result due to Ruzsa \cite{Ru1}, also shown in \cite{GH}, states that for any finite set $A\subseteq \Z$, $$3/4\ \leq\ \frac{\log |A+A|}{\log |A-A|} \ \leq\ 4/3.$$ The upper bound of $4/3$ is still quite far above what has been seen, and it is probably the case that the upper bound is not tight.

\begin{remark}
	Our method yields values
	\begin{equation}
	\frac{\log(4k+4+|R+R|)}{\log(4k+3+2|R|)}
	\end{equation}
	where $k=2\cdot \max R$. Since $R+R \subseteq [0,k]$, we have
	\begin{equation}\label{2000}
	\frac{\log(4k+4+|R+R|)}{\log(4k+3+2|R|)}\ \leq\ \frac{\log(5k+4)}{\log(4k+3)}
	\end{equation}
	which converges to 1 as $k \to \infty$. Therefore we do not expect to find large values of $\log |A+A|/\log |A-A|$ when we pick $R$ from $[0,r]$ with $r$ large. In fact, by \eqref{2000} we know that the maximum value of $\log |A+A|/\log |A-A|$ found with our method has $r<2000$.
\end{remark}

The above analysis suggests that the value $\log|A+A|/\log|A-A|$ tends to 1 as the size of $A$ grows. We formalize that statement below.


\subsection{Proof of Theorem \ref{thm:strongerratio}} \label{pf:strongerratio}
Propositions \ref{prop:logpart1-1} and \ref{prop:logpartt1-2} together prove the full inequality (equation \eqref{eq:log1}) in Theorem \ref{thm:strongerratio}. Proposition \ref{prop:logpart2} proves the second half of the theorem, equation \eqref{eq:log2}.

\begin{proposition}\label{prop:logpart1-1}
	Let $0\leq d < \delta \leq \sigma < s$ with $s+d=\sigma + \delta$. For every $\epsilon>0$ and a uniform random subset $A\subseteq [0,n]$, \be \lim_{n\to\infty} \Pr \left(\frac{\log|sA-dA|}{\log|\sigma A-\delta A|} \ >\ 1+\epsilon \right) \ = \ 0. \ee
\end{proposition}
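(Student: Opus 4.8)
The plan is to show that the event in question forces the denominator $|\sigma A-\delta A|$ to be anomalously small, whereas for a typical $A$ both generalized difference sets occupy almost all of the interval available to them. Since $A\subseteq[0,n]$ we have $sA-dA\subseteq[-dn,sn]$ and $\sigma A-\delta A\subseteq[-\delta n,\sigma n]$, and because $s+d=\sigma+\delta$ each of these is an interval containing exactly $(s+d)n+1$ integers; in particular the trivial bound $|sA-dA|\le (s+d)n+1$ always holds. If $\log|sA-dA|>(1+\eps)\log|\sigma A-\delta A|$, then
\be
|\sigma A-\delta A| \ <\ |sA-dA|^{1/(1+\eps)}\ \le\ \big((s+d)n+1\big)^{1/(1+\eps)},
\ee
so the event implies that $|\sigma A-\delta A|$ has size of order at most $n^{1/(1+\eps)}=o(n)$.

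Next I would record a deterministic lower bound on $|\sigma A-\delta A|$ valid for affluent sets, which will contradict the above for large $n$. Fix $k$ and suppose $A$ is $k$-affluent (Definition \ref{def:affluent}). If $\sigma+\delta>2$ then $A$ is in particular $k$-rich, so Lemma \ref{lemma:A+A} applied with $(\sigma,\delta)$ in place of $(s,d)$ gives $[-\delta n+k+1,\sigma n-k-1]\subseteq \sigma A-\delta A$, whence $|\sigma A-\delta A|\ge (s+d)n-2k-1$; in the remaining case $s+d=\sigma+\delta=2$ the constraints force $\sigma=\delta=1$, so $\sigma A-\delta A=A-A$ and the affluence condition $[-n+k+1,n-k-1]\subseteq A-A$ gives the same bound. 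Combining this with the trivial upper bound on the numerator, on the event that $A$ is $k$-affluent we have
\be
\frac{\log|sA-dA|}{\log|\sigma A-\delta A|}\ \le\ \frac{\log\big((s+d)n+1\big)}{\log\big((s+d)n-2k-1\big)},
\ee
and for fixed $k$ the right-hand side tends to $1$ as $n\to\infty$. Hence there is an $N=N(k,\eps)$ so that for $n\ge N$ this ratio lies below $1+\eps$; a $k$-affluent set therefore cannot lie in the event, giving $\Pr\big(\tfrac{\log|sA-dA|}{\log|\sigma A-\delta A|}>1+\eps\big)\le \Pr(A\text{ is not }k\text{-affluent})$ for all $n\ge N$.

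The only genuinely probabilistic input is a fringe estimate uniform in $n$: there should exist constants $C>0$ and $\rho\in(0,1)$ with $\Pr(A\subseteq[0,n]\text{ is not }k\text{-affluent})\le C\rho^k$ for all $n$ and all $1\le k<n/2$. The idea, standard since Martin--O'Bryant \cite{MO} and Zhao \cite{Zh2}, is that for each target $t$ within distance $k$ of a fringe of $A+A$ (respectively $A-A$) one exhibits $\gtrsim k/2$ pairwise-disjoint representations $t=a+b$ (respectively $t=a-b$) with $a,b\in[0,n]$; each such representation independently avoids $A$ with probability $3/4$, so $t$ is missing with probability at most $(3/4)^{\lfloor k/2\rfloor}$, and a union bound over the $O(n)$ fringe targets collapses to a convergent geometric series of the form $C\rho^k$. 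Granting this, the previous paragraph yields $\limsup_{n\to\infty}\Pr\big(\tfrac{\log|sA-dA|}{\log|\sigma A-\delta A|}>1+\eps\big)\le C\rho^k$ for every fixed $k$, and letting $k\to\infty$ drives this to $0$, proving the proposition. The main obstacle is the difference-set half of the fringe estimate, since the representations $t=a-b$ overlap more than sum representations and one must work to extract enough disjoint pairs near the fringe; notably this harder half is needed only in the classical case $s+d=2$, as Lemma \ref{lemma:A+A} reduces every other case to the easier sumset estimate. This is precisely where the affluence machinery of \S\ref{sec:background} (in particular Lemma \ref{lemma:missing_diff}) carries the load.
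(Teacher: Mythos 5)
Your proof is correct in substance and rests on the same two pillars as the paper's: the deterministic fact that affluent sets have ratio close to $1$ uniformly over fringes (the paper's inequality \eqref{eq:infact}), and Zhao's estimate (Lemma \ref{lemma:missing_diff}) that non-affluent sets are exponentially rare. Where you differ is the surrounding structure, and your route is leaner. The paper conditions on $0,n\in A$, decomposes the conditioned event over minimal fringe pairs (the quantities $\mu_n(L,R;k)$, $\lambda_n$, $\Lambda_n$ and Lemma \ref{lemma:aff}), proves the fringe-wise limit $\mu=0$, and then removes the conditioning via Zhao's Lemma 2.15. You instead bound $\Pr(\text{event})\le\Pr(A\text{ is not }k\text{-affluent})$ directly for $n\ge N(k,\eps)$, take $n\to\infty$, then $k\to\infty$. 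Since the goal is only that a probability vanishes (not that a proportion converges, which is what the fringe-pair decomposition machinery is really for), this streamlining loses nothing.

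One technical wrinkle you should repair: you apply Lemma \ref{lemma:A+A} to an arbitrary $k$-affluent set, but the proof of that lemma uses $0,n\in A$ (it translates $[k+1,2n-k-1]$ by elements of $(s-2)\{0,n\}-d\{0,n\}$), and a random affluent set need not contain $0$ or $n$; indeed for sets $A\subseteq[1,n-1]$ the exact containment $[-\delta n+k+1,\sigma n-k-1]\subseteq\sigma A-\delta A$ can fail. The paper sidesteps this because its fringe pairs satisfy $0\in L,R$, so every set it handles contains both $0$ and $n$, and the unconditioned case is deferred to Zhao's Lemma 2.15. The fix for your unconditional version is cheap: richness forces $2\min A\le k+1$ and $2\max A\ge 2n-k-1$, so translating $[k+1,2n-k-1]$ by all sums of $\sigma-2$ copies and differences of $\delta$ copies of $\{\min A,\max A\}$ (these translates overlap since $2k<n$) shows $\sigma A-\delta A$ contains an interval of at least $(s+d)n-(s+d)(k+1)$ integers. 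The loss is still $O_{s,d}(k)$ for fixed $k$, so your bound becomes $\log\left((s+d)n+1\right)/\log\left((s+d)n-(s+d)(k+1)\right)\to 1$ and the rest of the argument goes through unchanged.
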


The idea is that for a set $A$ with generalized MSTD fringe pair $(L,R;k)$, $A+A$ and $A-A$ are almost always full in the middle as $n$ goes to infinity and then $k$ goes to infinity. We must first introduce a lemma from Zhao \cite[Lemma 2.13]{Zh2}.

\begin{lemma}[Zhao]\label{lemma:missing_diff}
	Let $n, \overline{k}$ be positive integers with $n>2\overline{k}$. Let $A$ be a uniform random subset of $[0,n]$. Then $$\Pr ([\overline{k}+1,2n-\overline{k}-1] \not \subseteq A+A)\ \leq\ \frac{3(3/4)^{\overline{k}/2}}{2-\sqrt{3}}$$ and $$\Pr ([-n+\overline{k}+1,n-\overline{k}-1]\not \subseteq A-A])\ \leq\ 8\left(\frac{3}{4}\right)^{\overline{k}+2} + (n+1)\left(\frac{3}{4}\right)^{(n-1)/3}.$$
\end{lemma}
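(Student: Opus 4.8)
The plan is to bound the failure probability \emph{pointwise} --- for each individual $m$, estimate the probability that $m$ is missing from $A+A$ (respectively $A-A$) --- and then conclude by a union bound over the relevant middle range, the individual estimates decaying fast enough that the resulting sum is a convergent geometric series in $\overline{k}$. The basic mechanism throughout is that if $m$ admits $t$ representations coming from \emph{pairwise disjoint} pairs of indices, then the $t$ events ``both members of the pair lie in $A$'' are independent, each of probability $1/4$, so $\Pr(m \notin A\pm A) \le (3/4)^t$.

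For the sumset this is immediate. Fix $m \in [\overline{k}+1, 2n-\overline{k}-1]$. The pairs $\{i, m-i\}$ with $i < m/2$ (and both coordinates in $[0,n]$) use distinct elements and are pairwise disjoint, and their number grows linearly as $m$ moves inward from either endpoint of the range; since the map $a \mapsto n-a$ preserves the uniform measure and sends $m \mapsto 2n-m$, it suffices to treat $m \le n$, where the count is at least about $(\overline{k}+j)/2$ for $m = \overline{k}+1+j$. Hence $\Pr(m \notin A+A) \le (3/4)^{(\overline{k}+j)/2}$, and summing the geometric series $\sum_{j\ge 0}(3/4)^{(\overline{k}+j)/2}$ (with the factor $1/(1-\sqrt{3}/2)$ and the doubling from symmetry) produces a constant multiple of $(3/4)^{\overline{k}/2}$; careful bookkeeping of the constant gives exactly $\tfrac{3}{2-\sqrt3}(3/4)^{\overline{k}/2}$.

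The difference set is the genuinely delicate case, and is where I expect the main obstacle to lie: the representations of a difference $m$ are the pairs $(b, b+m)$ with $b \in [0,n-m]$, and consecutive pairs \emph{overlap}, so I cannot treat all $n-m+1$ of them as independent. The fix is to exhibit two disjoint families and use whichever is larger. First, letting $b$ range over $[0, \min(m-1, n-m)]$ makes the pairs $(b,b+m)$ pairwise disjoint (their small coordinates lie in $[0,m-1]$ and their large coordinates in $[m,2m-1]$), giving $\min(m, n-m+1)$ disjoint pairs. Second, decomposing $[0,n]$ into the arithmetic-progression chains $r, r+m, r+2m,\dots$ (one per residue mod $m$) and extracting a matching from each chain yields at least $\tfrac12(n+1-m)$ disjoint pairs. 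Taking the better of the two exponents, $\Pr(m \notin A - A) \le (3/4)^{\max(\min(m,\,n-m+1),\,(n+1-m)/2)}$, and one checks this exponent is at least $(n+1)/3$ throughout $0 \le m \le (n+1)/2$, the two branches crossing near $m = (n+1)/3$.

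It remains to sum over $m$, split into two regimes; by the symmetry $m \mapsto -m$ it suffices to handle $m \ge 0$ and double. For the ``deep middle'' $0 \le m \le (n+1)/2$, every term is at most $(3/4)^{(n-1)/3}$, so a crude union bound over these values contributes the term $(n+1)(3/4)^{(n-1)/3}$. For the ``boundary'' range $(n+1)/2 < m \le n-\overline{k}-1$ the pairs are \emph{fully} disjoint (since then $n-m < m$), giving the strong bound $(3/4)^{n-m+1}$, and the geometric sum $\sum_{n-m+1 \ge \overline{k}+2}(3/4)^{n-m+1} = 4(3/4)^{\overline{k}+2}$, doubled for negative $m$, contributes $8(3/4)^{\overline{k}+2}$. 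Combining the two regimes gives the stated difference-set bound. The main work is thus entirely in the difference set: isolating the two disjoint families that together cover all $m$, and choosing the regime split at $m \approx (n+1)/2$ so that the boundary differences yield the $(3/4)^{\overline{k}+2}$ decay while the interior differences, which have $\Omega(n)$ disjoint representations, are swept up by the single exponential-in-$n$ bound $(3/4)^{(n-1)/3}$.
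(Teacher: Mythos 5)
The paper offers no proof of this lemma at all --- it is imported verbatim from Zhao \cite[Lemma 2.13]{Zh2} --- so the only meaningful comparison is with Zhao's original argument, and your reconstruction is correct and follows essentially that same route: independence of the events ``both members of a representing pair lie in $A$'' over pairwise disjoint pairs, a geometric-series union bound near the fringes, and the residue-class chain decomposition giving $\Omega(n)$ disjoint pairs for middle differences, with your regime split at $m\approx (n+1)/2$ and the crossing analysis $\max\left(\min(m,n-m+1),\tfrac{n+1-m}{2}\right)\ge \tfrac{n+1}{3}$ producing exactly the stated terms $8(3/4)^{\overline{k}+2}$ and $(n+1)(3/4)^{(n-1)/3}$. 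One remark on the sumset constant: the crude count of $m/2$ disjoint pairs for $m=\overline{k}+1+j$ only yields $\frac{2\sqrt{3}}{2-\sqrt{3}}(3/4)^{\overline{k}/2}$ after summing and doubling; to land on the stated $\frac{3}{2-\sqrt{3}}(3/4)^{\overline{k}/2}$ you must use the full count --- $(m+1)/2$ pairs when $m$ is odd, and the extra factor $1/2$ from the singleton $\{m/2\}$ when $m$ is even, so each term is at most $\frac{\sqrt{3}}{2}(3/4)^{m/2}$ --- which is precisely the ``careful bookkeeping'' you defer, and it does check out.
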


We now set up the proof of Proposition \ref{prop:logpart1-1}. Recall that a set $A$ is $k$-affluent if $[k+1,2n-k-1]\subseteq A+A$ and $[-n+k+1,n-k-1]\subseteq A-A$. Fix $\eps>0$ and let \begin{enumerate}
	\item \begin{multline*}\mu_n(L,R;k) \ =\ 2^{-n+1}|\{A \subseteq [0,n] : 0,n \in A, \\ A \text{ is affluent with fringe } (L,R;k), \tfrac{\log|sA-dA|}{\log|\sigma A-\delta A|}\ > \ 1+\eps\}|,\end{multline*}
	
	\item \[\lambda_n\ = \ 2^{-n+1}|\{A \subseteq [0,n] : 0,n \in A, \tfrac{\log|sA-dA|}{\log|\sigma A-\delta A|}\ >\ 1+\eps\}|,\]
	
	\item \[\Lambda_n \ = \ 2^{-n-1}|\{A \subseteq [0,n] : \tfrac{\log|sA-dA|}{\log|\sigma A-\delta A|} > 1+ \eps\}|.\]
\end{enumerate}

\ \\
Our final goal is showing that $\lim_{n\to \infty}\Lambda_n=0$. To show this, we prove that \[0 \ = \ \sum_{(L,R;k)}\mu_n(L,R;k) \ = \ \lim_{n \to \infty} \lambda_n\ = \ \lim_{n \to \infty} \Lambda_n,\] where $\sum_{(L,R;k)}\mu_n(L,R;k)$ is summing over all minimal MSTD fringe pairs $(L,R;k)$.

\begin{lemma}\label{lemma:aff}
	If $\mu=\lim_{n \to \infty} \sum_{(L,R;k)}\mu_n(L,R;k)$ exists, then $\lim_{n \to \infty} \lambda_n=\mu$.
\end{lemma}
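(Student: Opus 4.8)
The plan is to show that $\lambda_n$ and $\sum_{(L,R;k)}\mu_n(L,R;k)$ differ by a quantity that vanishes as $n\to\infty$; since the latter converges to $\mu$ by hypothesis, so will $\lambda_n$. The first observation is that the sum over \emph{minimal} fringe pairs counts exactly the affluent sets, each exactly once. Since every affluent set is rich, Lemma \ref{lemma:minimal} applies and guarantees that each affluent $A$ with $0,n\in A$ has a unique minimal generalized MSTD fringe pair, and that every fringe pair of $A$ is obtained from it by enlarging $k$. Hence any such $A$ satisfying $\log|sA-dA|/\log|\sigma A-\delta A|>1+\eps$ is counted exactly once in $\sum_{(L,R;k)}\mu_n(L,R;k)$, namely in the term indexed by its minimal fringe pair. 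Consequently
\[
\sum_{(L,R;k)}\mu_n(L,R;k)\ =\ 2^{-n+1}\big|\{A\subseteq[0,n]: 0,n\in A,\ A\text{ affluent},\ \tfrac{\log|sA-dA|}{\log|\sigma A-\delta A|}>1+\eps\}\big|,
\]
and comparing with the definition of $\lambda_n$ shows that $0\le \lambda_n-\sum_{(L,R;k)}\mu_n(L,R;k)$ equals the conditional measure of those sets satisfying the log inequality that fail to be affluent.

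The next step is to bound this difference, discarding the log condition, by the probability that a random $A$ (conditioned on $0,n\in A$) is not affluent. Here I would use that affluence is upward closed in the fringe parameter: if $A$ is $\overline{k}$-affluent then it is $k$-affluent for every $k\ge\overline{k}$, since $[k+1,2n-k-1]\subseteq[\overline{k}+1,2n-\overline{k}-1]$ and likewise for the difference intervals. Thus for any fixed threshold $\overline{k}$, a non-affluent set is in particular not $\overline{k}$-affluent, giving
\[
0\ \le\ \lambda_n-\sum_{(L,R;k)}\mu_n(L,R;k)\ \le\ \Pr\big(A\text{ not }\overline{k}\text{-affluent}\mid 0,n\in A\big).
\]
Conditioning on the two independent events $0\in A$ and $n\in A$ inflates any probability by at most a factor of $4$, so I pass to unconditioned probabilities and apply Zhao's Lemma \ref{lemma:missing_diff} (via a union bound over the two failure modes $[\overline{k}+1,2n-\overline{k}-1]\not\subseteq A+A$ and $[-n+\overline{k}+1,n-\overline{k}-1]\not\subseteq A-A$), obtaining a bound of the form $4\big(\tfrac{3(3/4)^{\overline{k}/2}}{2-\sqrt3}+8(3/4)^{\overline{k}+2}+(n+1)(3/4)^{(n-1)/3}\big)$.

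Finally I would take $\limsup_{n\to\infty}$ of this inequality for fixed $\overline{k}$: the term $(n+1)(3/4)^{(n-1)/3}$ vanishes, leaving a bound depending only on $\overline{k}$ that tends to $0$ as $\overline{k}\to\infty$. Since $\overline{k}$ was arbitrary, $\limsup_{n\to\infty}\big(\lambda_n-\sum_{(L,R;k)}\mu_n(L,R;k)\big)=0$, and together with nonnegativity this gives $\lim_{n\to\infty}\big(\lambda_n-\sum_{(L,R;k)}\mu_n(L,R;k)\big)=0$. As $\sum_{(L,R;k)}\mu_n(L,R;k)\to\mu$ by assumption, we conclude $\lim_{n\to\infty}\lambda_n=\mu$. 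The main obstacle is the bookkeeping of Step one: verifying that summing over \emph{minimal} fringe pairs counts each affluent set exactly once, rather than once for every admissible $k$; this is precisely what the uniqueness of the minimal fringe pair and the structural description of all fringe pairs of a fixed set in Lemma \ref{lemma:minimal} are designed to secure. The secondary care point, the interchange of the limits in $n$ and $\overline{k}$, is handled cleanly by the $\limsup$ argument above.
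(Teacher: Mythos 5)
Your proof is correct and follows essentially the same route as the paper's: both compare $\lambda_n$ with the sum over minimal fringe pairs, bound the discrepancy by the probability of failing $\overline{k}$-affluence via Zhao's Lemma \ref{lemma:missing_diff}, and then let $n\to\infty$ before $\overline{k}\to\infty$. The only organizational differences are that the paper truncates the sum at $k\le \overline{k}$ and sandwiches $\lambda_n$ between truncated sums (rather than keeping the full sum and running a $\limsup$ argument as you do), and that you handle the conditioning on $0,n\in A$ with an explicit factor of $4$, a detail the paper elides.
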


\begin{proof}
	First notice that for any $\kbar<n$, \[\sum_{\kbar}\mu_n(L,R;k) \ := \ \sum_{\substack{(L,R;k) \\ k \leq \kbar}}\mu_n(L,R;k)  \ \leq\ \lambda_n.\] We may say this because of Lemma \ref{lemma:minimal}. We want to bound above the number of sets which are counted in $\lambda_n$ but not in $\sum_{\kbar}\mu_n(L,R;k)$. Suppose $A \subseteq [0,n]$, $0,n \in A$, and $\log |A+A|/\log |A-A|>1+\eps$ but $A$ is not $k$-affluent for any $k \leq \kbar$. Let $L=A \cap [0,\kbar], R=(n-A)\cap [0,\kbar]$. There are two cases.
	
	In the first case, $(L,R;\kbar)$ is not a generalized MSTD fringe pair. Since $A$ is generalized MSTD, $A-A$ must be missing at least one element in $[-n+\kbar+1,n-\kbar-1]$. In the second case, $(L,R;\kbar)$ is a generalized MSTD fringe pair. Then either $A+A \not \supseteq [\kbar+1, 2n-\kbar-1]$ or $A-A \not \supseteq [-n+\kbar +1, n-\kbar-1]$. So the probability that $A$ is counted in $\lambda_n$ but not in $\sum_{\kbar}\mu_n(L,R;k)$ is bounded above by
	\begin{multline*}
	\Pr(A+A \not \supseteq [\kbar+1, 2n-\kbar-1]) + \Pr(A-A \not \supseteq [-n+\kbar +1, n-\kbar-1]) \\ \leq\ \frac{3(3/4)^{\overline{k}/2}}{2-\sqrt{3}} + 8\left(\frac{3}{4}\right)^{\overline{k}+2} + (n+1)\left(\frac{3}{4}\right)^{(n-1)/3}
	\end{multline*}
	by Lemma \ref{lemma:missing_diff}. Let $n$ go to infinity to get \begin{multline*}\sum_{\kbar}\mu(L,R;k)\ \leq\ \lim \inf_{n \to \infty} \lambda_n\ \leq\ \\ \lim \sup_{n \to \infty} \lambda_n\ \leq\ \sum_{\kbar}\mu(L,R;k) + \frac{3(3/4)^{\overline{k}/2}}{2-\sqrt{3}} + 8\left(\frac{3}{4}\right)^{\overline{k}+2}.\end{multline*} Let $\kbar$ go to infinity to get $\mu=\sum_{(L,R;k)}\mu(L,R;k) = \lim_{n \to \infty} \lambda_n$.
\end{proof}

\begin{proposition}
	For every $\eps>0$, $\mu=0$.
\end{proposition}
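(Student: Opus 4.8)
The plan is to exploit the fact that an affluent set has both of its generalized difference sets nearly complete, so that each of $|sA-dA|$ and $|\sigma A-\delta A|$ differs from the maximal possible value only by a fringe term controlled by $k$. Concretely, for any $A\subseteq[0,n]$ with $0,n\in A$ we have $sA-dA\subseteq[-dn,sn]$ and hence $|sA-dA|\leq (s+d)n+1$, and identically $|\sigma A-\delta A|\leq (s+d)n+1$ since $\sigma+\delta=s+d$. For the lower bounds I would use that an affluent set satisfies $[k+1,2n-k-1]\subseteq A+A$, so that Lemma \ref{lemma:A+A} gives $[-dn+k+1,sn-k-1]\subseteq sA-dA$ and likewise $[-\delta n+k+1,\sigma n-k-1]\subseteq \sigma A-\delta A$ (the boundary case $s+d=2$ follows directly from the affluence condition $[-n+k+1,n-k-1]\subseteq A-A$). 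Thus both cardinalities are at least $(s+d)n-2k-1$.

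Next I would combine these bounds using the defining constraint $2k<n$ of an affluent set, i.e.\ $2k+1\leq n$, which forces both cardinalities to lie in the interval $[(s+d-1)n,\,(s+d)n+1]$. This yields
$$\frac{\log|sA-dA|}{\log|\sigma A-\delta A|}\ \leq\ \frac{\log((s+d)n+1)}{\log((s+d-1)n)}.$$
The right-hand side depends only on $n$ and the fixed parameters $s,d$, and not on $k$ or on the particular fringe pair; since both logarithms equal $\log n + O(1)$, it tends to $1$ as $n\to\infty$. Hence there is an $n_0=n_0(\eps,s,d)$ such that for all $n>n_0$ the right-hand side is strictly less than $1+\eps$. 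For such $n$ no affluent set can satisfy $\tfrac{\log|sA-dA|}{\log|\sigma A-\delta A|}>1+\eps$, so every summand $\mu_n(L,R;k)$ equals $0$.

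The crucial point, and the only place any care is needed, is that this vanishing is uniform over all fringe pairs simultaneously. For a fixed $n$ the sum $\sum_{(L,R;k)}\mu_n(L,R;k)$ ranges over finitely many minimal fringe pairs, all with $k<n/2$, and the bound above uses nothing about a fringe pair beyond the inequality $2k<n$, so it applies to every term at once. Consequently $\sum_{(L,R;k)}\mu_n(L,R;k)=0$ for every $n>n_0$, and letting $n\to\infty$ gives $\mu=0$. I expect the main conceptual obstacle to be exactly the recognition that the affluence constraint $2k<n$ bounds the fringe loss $2k+1$ by $n$, which is negligible against the full size $\sim(s+d)n$ on the logarithmic scale; once this is in hand the conclusion is immediate, and in particular no delicate interchange of limit and summation is required, since the sum is eventually identically zero.
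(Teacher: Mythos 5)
Your proof is correct and takes essentially the same route as the paper: both arguments bound $|sA-dA|$ above by $(s+d)n+1$, use affluence together with $2k<n$ to bound $|\sigma A-\delta A|$ below by roughly $(s+d-1)n$, and conclude that the resulting ratio bound, being uniform over all fringe pairs and tending to $1$, forces every $\mu_n(L,R;k)$ to vanish for all sufficiently large $n$. Your lower bound $(s+d-1)n$ is marginally cleaner than the paper's $(s+d-1)n+s+d$ (which also counts the multiples of $n$ coming from $0,n\in A$), and your explicit handling of the $s+d=2$ boundary case of Lemma \ref{lemma:A+A} is a nice touch, but these are inessential differences.
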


\begin{proof}
	Fix $n$ and suppose $A\subseteq [0,n]$, $0,n \in A$, $A$ is affluent for some $k<n/2$ and $\log |A+A|/\log |A-A| >1+\eps$. Let $L=A \cap[0,k], R=(n-A) \cap [0,k]$. Then $sA-dA$ contains $[-dn+k+1,sn-k-1]$ and $\sigma A-\delta A$ contains $[-\delta n+k+1,\sigma n-k-1]$. This imposes strict bounds on $\log |sA-dA|/\log |\sigma A-\delta A|$. Note that the quantities $|(sR+dL)\cap [0,k]|$, $|(dL+sR)\cap [0,k]|$ are both bounded above by $k+1$. Observe the inequalities $|sA-dA|\leq (s+d)n-2k-1 + 2k+2$ and $|\sigma A-\delta A| \geq (\sigma+\delta)n-2k-1 + (s+d+1)$ (since $-dn,\ldots,sn \in \sigma A+\delta A$). Thus
	\begin{align}
	\frac{\log|sA-dA|}{\log|\sigma A-\delta A|} & \ \leq\ \frac{\log ((s+d)n-2k-1 + 2k+2)}{\log((\sigma+\delta)n-2k-1 + (s+d+1))} \nonumber \\
	&\ \leq\ \frac{\log((s+d)n+1)}{\log((s+d-1)n+s+d)}, \label{eq:infact}
	\end{align}
	which goes to 1 as $n$ goes to infinity. There is an $N$ such that $\log((s+d)n+1)/\log((s+d-1)n+s+d) < 1+\eps$ for all $n>N$. Therefore $\mu_n(L,R;k)$ is zero for all $n>N$. This implies that $\mu=\sum_{(L,R;k)}\mu(L,R;k)$ counts finitely many sets, so $\mu=0$.
\end{proof}

To get rid of the stipulation that $0$ and $n$ are in $A$, we must show that $\lim_{n \to \infty}\Lambda_n = \lim_{n \to \infty} \lambda_n$. This result may be found in \cite[Lemma 2.15]{Zh2}. This concludes the proof of Proposition \ref{prop:logpart1-1}. The key point is showing that the proportion of affluent sets satisfying our property is the same as the proportion of all sets satisfying our property, an idea first developed in \cite{Zh2}. The pairs $(s,d)$ and $(\sigma,\delta)$ may be reversed to show that

\begin{proposition} \label{prop:logpartt1-2}
	Let $0\leq d<\delta \leq \sigma < s$ with $s+d=\sigma+\delta$. For every $\epsilon>0$ and a uniform random subset $A\subseteq [0,n]$, \be \lim_{n\to\infty} \Pr \left(\frac{\log|sA-dA|}{\log|\sigma A-\delta A|} \ < \ 1-\epsilon \right) \ = \ 0.\ee
\end{proposition}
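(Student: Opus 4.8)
The plan is to observe that Proposition \ref{prop:logpartt1-2} is the exact mirror image of Proposition \ref{prop:logpart1-1}, and to obtain it by the symmetry of the construction under interchanging the roles of the two pairs $(s,d)$ and $(\sigma,\delta)$. The crucial structural fact is that the entire argument for Proposition \ref{prop:logpart1-1} never used anything about the pairs beyond the hypotheses $0\le d<\delta\le\sigma<s$, $s+d=\sigma+\delta$, and the fact that both $sA-dA$ and $\sigma A-\delta A$ contain a long interval in the middle once $A$ is affluent (Lemma \ref{lemma:A+A}). Since $s+d=\sigma+\delta$, both generalized difference sets have the same ambient length $(s+d)n$, so the same trivial bounds $|sA-dA|\le (s+d)n+1$ and $|\sigma A-\delta A|\ge (s+d-1)n+(s+d)$ hold with the roles swapped.

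Concretely, I would note that the event $\log|sA-dA|/\log|\sigma A-\delta A|<1-\eps$ is equivalent to $\log|\sigma A-\delta A|/\log|sA-dA|>1/(1-\eps)$, and that $1/(1-\eps)=1+\eps'$ for some $\eps'>0$. Thus it suffices to bound the probability that $\log|\sigma A-\delta A|/\log|sA-dA|>1+\eps'$. This is precisely the quantity controlled by Proposition \ref{prop:logpart1-1} after formally renaming $(s,d)\leftrightarrow(\sigma,\delta)$, provided the renamed quantity still satisfies the hypotheses of that proposition. Here one must be slightly careful: the proof of Proposition \ref{prop:logpart1-1} only needs the two bounds $|\,(\text{numerator set})\,|\le (s+d)n+1$ and $|\,(\text{denominator set})\,|\ge (s+d-1)n+s+d$, and both of these are symmetric in the two pairs because all four of $sA-dA,\,dA-sA,\,\sigma A-\delta A,\,\delta A-\sigma A$ lie in an interval of the same length and each contains $\{-dn,\dots,sn\}$ or its analogue once $A$ is affluent. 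Hence the key inequality \eqref{eq:infact} holds verbatim with the numerator and denominator exchanged, and the same affluence machinery (Lemmas \ref{lemma:aff} and \ref{lemma:missing_diff}) applies unchanged.

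I would therefore carry out the following steps, in order. First, reduce the $<1-\eps$ statement to a $>1+\eps'$ statement for the reciprocal ratio via the algebraic identity above. Second, verify that Lemma \ref{lemma:A+A} gives the containment $[-\delta n+k+1,\sigma n-k-1]\subseteq \sigma A-\delta A$ as well, so that the denominator set $|sA-dA|$ and the numerator set $|\sigma A-\delta A|$ both enjoy the required size estimates; this yields the mirrored version of \eqref{eq:infact}, namely $\log|\sigma A-\delta A|/\log|sA-dA|\le \log((s+d)n+1)/\log((s+d-1)n+s+d)$, which tends to $1$. Third, run the identical affluence-reduction argument (defining the analogues of $\mu_n,\lambda_n,\Lambda_n$ for the reciprocal-ratio event) to conclude that the limiting proportion of such sets is zero.

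The main obstacle is purely bookkeeping rather than conceptual: one must confirm that interchanging $(s,d)$ and $(\sigma,\delta)$ preserves the hypothesis chain $0\le d<\delta\le\sigma<s$ in the weakened form actually used. The proof of Proposition \ref{prop:logpart1-1} does not genuinely require the full ordering $d<\delta\le\sigma<s$ for its final size bounds; it only needs $s+d=\sigma+\delta\ge 3$ and the affluence containments, both of which are symmetric. For this reason the cleanest write-up simply remarks that the proof of Proposition \ref{prop:logpart1-1} applies mutatis mutandis with the two pairs reversed, and that is exactly the route I would take, spelling out only the reciprocal-ratio reduction and the mirrored form of \eqref{eq:infact} to make the symmetry transparent.
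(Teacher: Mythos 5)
Your proposal is correct and takes essentially the same route as the paper: the paper disposes of Proposition \ref{prop:logpartt1-2} with the single remark that ``the pairs $(s,d)$ and $(\sigma,\delta)$ may be reversed'' in the argument for Proposition \ref{prop:logpart1-1}, which is exactly your symmetry argument. Your write-up is in fact more careful than the paper's, since you make explicit the reciprocal-ratio reduction and the observation that the size bounds behind \eqref{eq:infact} are symmetric in the two pairs even though the literal hypothesis chain $0\le d<\delta\le\sigma<s$ is not preserved under the swap.
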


In fact, \eqref{eq:infact} shows that there are no affluent sets $A$ for which $|sA-dA|/|\sigma A-\delta A|\geq (s+d)/(s+d-1)$. Similarly, there are no affluent sets $A$ for which $|sA-dA|/|\sigma A-\delta A|\leq (s+d-1)/(s+d)$. Then we may say the following.

\begin{proposition} \label{prop:logpart2}
	Let $0\leq d<\delta \leq \sigma < s$ with $s+d=\sigma+\delta$. For a uniform random subset $A\subseteq [0,n]$, \be \lim_{n\to\infty} \Pr \left(\frac{s+d-1}{s+d}\ <\ \frac{|sA-dA|}{|\sigma A-\delta A|}\ <\ \frac{s+d}{s+d-1} \right) \ = \ 1. \ee
\end{proposition}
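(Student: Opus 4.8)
The plan is to exploit the fact, already isolated in the discussion preceding \eqref{eq:infact}, that an affluent set forces both $sA-dA$ and $\sigma A-\delta A$ to be ``almost full''. Write $T=s+d=\sigma+\delta$. If $A\subseteq[0,n]$ is $\kbar$-affluent (Definition \ref{def:affluent}) then it is in particular $\kbar$-rich, so Lemma \ref{lemma:A+A} (together with the symmetric statement obtained by running the same argument with $(\sigma,\delta)$ in place of $(s,d)$) gives
\[
sA-dA\supseteq[-dn+\kbar+1,sn-\kbar-1],\qquad \sigma A-\delta A\supseteq[-\delta n+\kbar+1,\sigma n-\kbar-1],
\]
each an interval containing $Tn-2\kbar-1$ integers. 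On the other hand $sA-dA\subseteq[-dn,sn]$ and $\sigma A-\delta A\subseteq[-\delta n,\sigma n]$, each of size $Tn+1$. Hence for every $\kbar$-affluent set,
\[
\frac{Tn-2\kbar-1}{Tn+1}\ \le\ \frac{|sA-dA|}{|\sigma A-\delta A|}\ \le\ \frac{Tn+1}{Tn-2\kbar-1}.
\]

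The second step is a one-line computation showing these outer bounds sit strictly inside the target window. Cross-multiplying, the upper inequality $\tfrac{Tn+1}{Tn-2\kbar-1}<\tfrac{T}{T-1}$ is equivalent to $2\kbar<n-2+\tfrac1T$, and the lower inequality $\tfrac{Tn-2\kbar-1}{Tn+1}>\tfrac{T-1}{T}$ reduces to the same condition by symmetry. Thus there is a threshold — say $n>2\kbar+2$ — beyond which \emph{every} $\kbar$-affluent $A\subseteq[0,n]$ satisfies $\tfrac{T-1}{T}<\tfrac{|sA-dA|}{|\sigma A-\delta A|}<\tfrac{T}{T-1}$.

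It remains to convert ``most sets are affluent'' into the probability statement, taking the limits in the right order exactly as in Lemma \ref{lemma:aff}. Fix $\kbar$. By Lemma \ref{lemma:missing_diff}, the probability that a uniform $A\subseteq[0,n]$ fails to be $\kbar$-affluent is at most
\[
\frac{3(3/4)^{\kbar/2}}{2-\sqrt3}+8\Big(\tfrac34\Big)^{\kbar+2}+(n+1)\Big(\tfrac34\Big)^{(n-1)/3}.
\]
For $n>2\kbar+2$ the event in the proposition contains the event that $A$ is $\kbar$-affluent, so letting $n\to\infty$ (which kills the last term) yields
\[
\liminf_{n\to\infty}\Pr\!\left(\tfrac{T-1}{T}<\tfrac{|sA-dA|}{|\sigma A-\delta A|}<\tfrac{T}{T-1}\right)\ \ge\ 1-\frac{3(3/4)^{\kbar/2}}{2-\sqrt3}-8\Big(\tfrac34\Big)^{\kbar+2}.
\]
Since the left-hand side does not depend on $\kbar$, sending $\kbar\to\infty$ forces the liminf to equal $1$, which is the claim.

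The main obstacle is precisely this interchange of limits: no single $k$ works uniformly in $n$, so one must fix $\kbar$, take $n\to\infty$ to eliminate the $(n+1)(3/4)^{(n-1)/3}$ term, and only afterward let $\kbar\to\infty$. A minor point to verify is the boundary case $T=2$, where $(s,d),(\sigma,\delta)=(2,0),(1,1)$ and Lemma \ref{lemma:A+A} nominally requires $s+d>2$; there the two middle-interval containments are exactly the defining conditions $[\kbar+1,2n-\kbar-1]\subseteq A+A$ and $[-n+\kbar+1,n-\kbar-1]\subseteq A-A$ of affluence, so the identical squeeze applies without invoking Lemma \ref{lemma:A+A}.
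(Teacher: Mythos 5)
Your overall strategy (affluent sets are squeezed into the window, most sets are affluent, take $n\to\infty$ before $\kbar\to\infty$) is the same as the paper's, and your limit-interchange and the $T=2$ remark are fine. But there is a genuine gap at the very first step: you apply Lemma \ref{lemma:A+A} to an arbitrary $\kbar$-affluent set, while that lemma's proof requires $0,n\in A$ — it translates $[k+1,2n-k-1]$ by the elements of $(s-2)\{0,n\}-d\{0,n\}$, which lie in $(s-2)A-dA$ only when $0$ and $n$ belong to $A$. Under the weak reading of richness/affluence (just the two containments), which is the only reading compatible with your use of Lemma \ref{lemma:missing_diff} (that lemma controls precisely those containments and nothing about the endpoints), the conclusion of Lemma \ref{lemma:A+A} is simply false. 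Concretely, take $T=3$, $(s,d)=(3,0)$, $(\sigma,\delta)=(2,1)$, $\kbar$ odd, $j=(\kbar+1)/2$, and $A=[j,n-j]$: then $A+A=[\kbar+1,2n-\kbar-1]$ and $A-A=[-n+\kbar+1,n-\kbar-1]$, so $A$ is $\kbar$-affluent, yet $3A=[3j,3n-3j]$ misses all of $[\kbar+1,\tfrac{3(\kbar+1)}{2}-1]$, and $2A-A=[3j-n,2n-3j]$ likewise fails to contain its middle interval. So your claimed containments $sA-dA\supseteq[-dn+\kbar+1,sn-\kbar-1]$, $\sigma A-\delta A\supseteq[-\delta n+\kbar+1,\sigma n-\kbar-1]$ do not follow for the sets that Lemma \ref{lemma:missing_diff} says are abundant; and you cannot restore them by intersecting with $\{0,n\in A\}$, since that event has probability $1/4$, not $1-o(1)$.

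This is exactly the point the paper is careful about: its bound \eqref{eq:infact} is derived under the standing hypothesis $0,n\in A$ (built into the definitions of $\mu_n$ and $\lambda_n$), and the passage from conditioned sets ($\lambda_n$) to unconditioned uniform subsets ($\Lambda_n$) is a separate step, delegated to \cite[Lemma 2.15]{Zh2}, which conditions on $\min A$ and $\max A$ and uses translation invariance of the cardinalities. Your argument omits any such step. The good news is that your proof can be repaired without Zhao's lemma: weak affluence already forces $\min A\le(\kbar+1)/2$ and $\max A\ge n-(\kbar+1)/2$ (because $A+A\subseteq[2\min A,2\max A]$ must cover $[\kbar+1,2n-\kbar-1]$), so rerunning the proof of Lemma \ref{lemma:A+A} with $\{\min A,\max A\}$ in place of $\{0,n\}$ — the translates still glue, since consecutive translates differ by $\max A-\min A\le n\le 2n-2\kbar-1$ — yields $sA-dA\supseteq[-dn+\tfrac{T}{2}(\kbar+1),\,sn-\tfrac{T}{2}(\kbar+1)]$ and similarly for $(\sigma,\delta)$. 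The intervals shrink only by $O_T(\kbar)$, so your cross-multiplication squeeze survives with the threshold $n>2\kbar+2$ replaced by $n>C_T\kbar$, and the rest of your limit argument is unchanged. As written, however, the proof has a hole at its first step, and it is the one subtlety the proposition actually turns on.
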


\begin{remark}
	If $s+d=\sigma+\delta \geq 3$, then all of the instances of $k$-affluent sets in the proof above may be replaced with $k$-rich sets.
\end{remark}

\section{Bi-MSTD Sets}\label{sec:bimstd}

We use similar techniques as in the previous sections to resolve the following question: \emph{Is there a decomposition of $[0,n]$ into two disjoint MSTD sets?} We show that for sufficiently large $n$, such decompositions do exist and, in fact, are a positive proportion of all decompositions of $[0,n]$ as $n$ goes to infinity. Rather than starting with an interval $[0,n]$ and decomposing it, we consider the cases in which both $A$ and $A^c:=[\min A, \max A] \setminus A$ are MSTD. We first begin with some terminology.

\subsection{Definitions}

\begin{definition}
	We say that a set $A\subseteq \Z$ is \emph{bi-MSTD} if $A$ and $A^c$ are both MSTD.
\end{definition}

An example of a bi-MSTD set in $[0,19]$ is \begin{equation}\label{smallest_bi} A\ = \ \{0, 1, 3, 7, 8, 10, 11, 12, 15, 17, 18, 19\}.\end{equation} Both $A$ and its complement $A^c=\{2, 4, 5, 6, 9, 13, 14, 16\}$ are MSTD. Notice that $A^c-2$ is the smallest MSTD set up to translation and dilation, as proved by Hegarty in \cite{He}. By an exhaustive computer search, we determined that there are no bi-MSTD sets in $[0,18]$. To show that a positive proportion of subsets of $[0,n]$ are bi-MSTD, we use the framework developed by Zhao in \cite{Zh2}. We must first develop an analogue of fringe pairs for bi-MSTD sets.

\begin{definition}
	A fringe pair $(L,R;k)$ is \emph{bi-MSTD} if \begin{equation}\begin{split}|(L+L) \cap [0,k]|+|(R+R) \cap [0,k]| & \ >\ 2|(L+R) \cap [0,k]|, \\ |(L^c+L^c) \cap [0,k]|+|(R^c+R^c) \cap [0,k]| & \ >\ 2|(L^c+R^c) \cap [0,k]|,\end{split} \end{equation} where $L^c=[0,k]\setminus L$ and $R^c=[0,k]\setminus R$.
\end{definition}

Recall from Definition \ref{defn:richset} that a set $A$ which has MSTD fringe pair $(L,R;k)$ and satisfies the property \[A+A\ \supseteq\ [k+1,2n-k-1]\] is called \emph{rich}, and a rich set with an MSTD fringe pair is MSTD. Therefore, if $A$ has a bi-MSTD fringe pair and satisfies \begin{equation}\label{eq:bi} \begin{split} A+A\ \supseteq\ [k+1,2n-k-1], \\ A^c+A^c\ \supseteq\ [k+1,2n-k-1],\end{split} \end{equation} then $A$ is bi-MSTD. We call such sets \emph{bi-rich}. We now show that the set of $A\subseteq [0,n]$ which are bi-rich has density one as $n$ goes to infinity.


\subsection{Proof of Theorem \ref{thm:bimstd}}\label{subsec:bimstd}

\begin{lemma}
	Let $0<2k<n$ and $A \subseteq[0,n]$ be a uniform random subset (dropping the condition that $0,n \in A$). Then \[\Pr( [k+1,2n-k-1] \not \subseteq A+A \text{ or } [k+1,2n-k-1] \not \subseteq A^c+A^c)\ \leq \ \frac{6(3/4)^{k/2}}{2-\sqrt{3}}.\]
\end{lemma}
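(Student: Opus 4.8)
The plan is to reduce the statement to a single application of Zhao's bound (Lemma \ref{lemma:missing_diff}) for each of the two sets $A$ and $A^c$, and then to combine them by a union bound. Write $E_1 = \{[k+1,2n-k-1] \not\subseteq A+A\}$ and $E_2 = \{[k+1,2n-k-1]\not\subseteq A^c+A^c\}$, where $A^c := [0,n]\setminus A$. The event whose probability we must bound is precisely $E_1 \cup E_2$, so the elementary inequality $\Pr(E_1\cup E_2)\le \Pr(E_1)+\Pr(E_2)$ immediately reduces matters to bounding each $\Pr(E_i)$ separately.

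For $E_1$, Lemma \ref{lemma:missing_diff} gives directly $\Pr(E_1) \le \frac{3(3/4)^{k/2}}{2-\sqrt{3}}$, since $A$ is a uniform random subset of $[0,n]$ with $2k<n$. The key observation is that the same bound applies verbatim to $E_2$: because we have dropped the requirement $0,n \in A$, the complement is taken over the full interval, $A^c=[0,n]\setminus A$, and the map $A\mapsto A^c$ is a measure-preserving bijection on the $2^{n+1}$ subsets of $[0,n]$ (equivalently, choosing each element of $[0,n]$ independently with probability $1/2$ is invariant under interchanging ``in'' and ``out''). Hence $A^c$ is itself a uniform random subset of $[0,n]$, and a second application of Lemma \ref{lemma:missing_diff} yields $\Pr(E_2)\le \frac{3(3/4)^{k/2}}{2-\sqrt{3}}$. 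Adding the two bounds produces exactly $\frac{6(3/4)^{k/2}}{2-\sqrt{3}}$, as claimed.

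I do not expect a genuine obstacle here, since the content of the argument is entirely carried by Lemma \ref{lemma:missing_diff}; the only point requiring care is the distributional self-duality of $A$ and $A^c$. This is precisely why the hypothesis is phrased as dropping the condition $0,n \in A$: if one instead used the bi-MSTD convention $A^c=[\min A,\max A]\setminus A$, then $A^c$ would no longer be uniform over subsets of a fixed interval and the clean symmetry would break. An alternative, more hands-on route --- sketched in the deleted pointwise lemma --- would instead bound $\Pr(j\notin A+A \text{ or } j\notin A^c+A^c)$ for each individual $j\in[k+1,2n-k-1]$ via Zhao's Lemma 2.11 and then sum the resulting geometric series as in his Lemma 2.12. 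This reproduces the same constant $\frac{6(3/4)^{k/2}}{2-\sqrt{3}}$, but it is strictly more work than the union-bound argument above, so I would present the latter.
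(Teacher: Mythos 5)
Your proof is correct and takes essentially the same approach as the paper: the paper's proof is exactly the union bound followed by two applications of Lemma \ref{lemma:missing_diff}, giving twice its bound. The only detail the paper leaves implicit, which you rightly spell out, is that $A^c=[0,n]\setminus A$ is itself uniform because complementation is a measure-preserving bijection on subsets of $[0,n]$.
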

\begin{proof}
	We apply the union bound. The expression above is simply two times the expression given in Lemma \ref{lemma:missing_diff}.
\end{proof}
Note that for $k = 30$, \[\Pr( [k+1,2n-k-1] \not \subseteq A+A \text{ or } [k+1,2n-k-1] \not \subseteq A^c+A^c) \ \leq\ 0.299.\] Thus a positive proportion of $A \subseteq [0,n]$ are $k$-bi-rich (in fact, most of them are). Now we simply need to find a bi-MSTD pair $(L,R;k)$ with $k = 30$. We give an example below:
\begin{equation}\label{k=30}
\begin{split}
L\ =\ \{0, 1, 2, 5, 8, 10, 11, 12, 14, 15, 16, 18, 23, 25, 26, 28, 29\}, \\
R\ =\ \{0, 1, 3, 4, 8, 10, 11, 13, 14, 15, 17, 19, 20, 22, 23, 24, 28\}.
\end{split}
\end{equation}

With this fringe pair, we have shown that for some sufficiently large $N > 60$, for any $n>N$, the proportion of subsets of $[0,n]$ which are bi-MSTD is bounded below by a positive number. Since the bi-MSTD set in \eqref{smallest_bi} is contained in $[0,19]$, the same holds for $n \geq 19$.


\section{Future work}\label{sec:future}







We end with a list of some interesting additional questions to pursue.

\begin{enumerate}
	\item We were able to construct a fringe pair for which $|sA-dA|>|\sigma A-\delta A|$ for $s+d=\sigma +d+2 \leq q$ and $s>\sigma $. Can we construct a fringe pair to satisfy $|sA-dA|>|\sigma A-\delta A|$ for $s+d=\sigma +d+2 \leq q$ for any sequence $\{(s,d,\sigma ,\delta )_i : s+d=\sigma +\delta =i, 2 \leq i \leq q\}$ without using base expansion?
	
	\item As $n$ goes to infinity, $|sA-dA|/|\sigma A-\delta A|$ is between $(s+d-1)/(s+d)$ and $(s+d)/(s+d-1)$ almost all the time. For any $\eps>0$, is there a positive proportion of $A$ such that $t < |sA-dA|/|\sigma A-\delta A| < t+ \eps$ for $(s+d-1)/(s+d) < t < (s+d)/(s+d-1)-\eps$? What if instead of uniform random subsets $A\subseteq [0,n]$, we have the probability of $j \in A \subseteq[0,n]$ decaying with $n$?
	
	\item We showed that a positive proportion of subsets of $[0,n]$ are bi-MSTD, which is equivalent to showing that a positive proportion of decompositions of $[0,n]$ into two sets have the property that both sets are MSTD. Can we decompose $[0,n]$ into three sets which are MSTD? For any finite number $k$, is there a sufficiently large $n$ for which there is a $k$-decomposition into MSTD sets?
	
	\item We found bi-MSTD fringe pairs by random searches and were not able to come up with any deterministic algorithm to produce bi-MSTD fringe pairs. Are there clean families of bi-MSTD fringe pairs? What about generalized bi-MSTD fringe pairs?
	
	\item We have shown that any arithmetic progression can be decomposed into two MSTD sets, and such decompositions have positive density among all decompositions of that arithmetic progression as $n$ goes to infinity. What are the sets which can be decomposed into two MSTD sets? With positive density?
	
	\item In our proof that a positive proportion of subsets in $[0,n]$ are bi-MSTD, we showed that the proportion of bi-rich sets goes to one as $n$ goes to infinity. How quickly does the density go to one? How quickly does the proportion of MSTD sets that are bi-MSTD increase with $n$?
\end{enumerate}

\section*{Acknowledgments}

Our research was conducted as part of the SMALL 2015 REU program. The first, second, and fourth named authors were supported by Williams College and NSF grant DMS-1347804. The third named author was supported by NSF grant DMS-1265673. The first named author was also supported by a Clare Boothe Luce scholarship. We thank Gal Gross and Kevin O'Bryant for many enlightening conversations and the referee for constructive comments.

\section*{References}

\end{document}